\newtheorem{theorem}{Theorem}[section]
\newtheorem{lemma}[theorem]{Lemma} 
\newtheorem{remark}[theorem]{Remark} 
\newtheorem{definition}[theorem]{Definition} 
\newtheorem{claim}[theorem]{Claim} 
\def\da{\downarrow}
\def\nda{\not\downarrow}
\def\A{\mathcal{A}}
\def\K{\mathcal{K}}
\def\M{\mathbb{M}}
\def\a{\alpha}
\def\o{\omega}
\title{Finding a field in a Zariski-like structure}
\author{Kaisa Kangas}
\begin{document}

\thanks{My research was supported by Finnish National Doctoral Programme in Mathematics and its Applications}

%\date{}                                           
% Activate to display a given date or no date

\maketitle
%\section{}
%\subsection{}

\begin{center}
Department of Mathematics, University of Helsinki \\
P.O. Box 68, 00014, Finland \\ 
kaisa.kangas@helsinki.fi \\
%+358 40 561 1543
\vspace{5 mm}
\end{center}

\begin{abstract}
We show that if $\M$ is a Zariski-like structure (see \cite{lisuriart}) and the canonical pregeometry obtained from the bounded closure operator (bcl) is non locally modular, then $\M$ interprets either an algebraically closed field or a non-classical group.
\end{abstract}

Mathematics subject classification: 03C50, 03C98 \\
Key words: group configuration, Zariski geometries, AECs

\tableofcontents

\section{Introduction}

E. Hrushovski and B. Zilber introduced Zariski geometries in \cite{HrZi}.
These structures  generalize the Zariski topology of an algebraically closed field.
According to Zilber \cite{Zi}, the primary motivation behind the notion was to identify all classes where Zilber's trichotomy principle holds. 
The principle can be stated as follows:  if $X$ is a strongly minimal set, then one of the following is true: 
\begin{itemize}
\item The canonical pregeometry on $X$ obtained from the algebraic closure operator is trivial;
\item the canonical pregeometry is locally modular;
\item an algebraically closed field can be interpreted in $X$.
\end{itemize}
Zariski geometries provide a context where the trichotomy holds.
Indeed, in \cite{HrZi}, Hrushovski and Zilber showed that in a non locally modular Zariski geometry, there is an algebraically closed field present.

In \cite{lisuriart}, we presented \emph{Zariski-like} quasiminimal pregeometry structures as non-elementary generalizations of Zariski geometries.
Quasiminimal pregeometry structures (in sense of \cite{monet}) provide a non-elementary analogue for strongly minimal structures from the first order context.
There, the canonical pregeometry is obtained from the bounded closure operator, which corresponds to the algebraic closure operator in the first order case.
We will work in the context of quasiminimal classes, i.e. abstract elementary classes (AECs) that arise from a quasiminimal pregeometry structure (see \cite{monet}).
These classes are uncountably categorical and have both AP and JEP and thus also a universal model homogeneous monster model which we will denote $\M$. 
By \cite{lisuriart}, they also have a perfect theory of independence.

In the present work, we prove the counterpart of the theorem from \cite{HrZi} stating the existence of the field in the non locally modular case:  if $\M$ is a Zariski-like structure with non locally modular canonical pregeometry, then $\M$ interprets either an algebraically closed field or a non-classical group (see \cite{HLS}).
This is Theorem \ref{MAIN} in the present paper.
It demonstrates that the concept of a Zariski-like structure captures the idea of a Zariski geometry in the non-elementary context.
It is an open question whether non-classical groups exist, and their existence would be a remarkable result in itself.
By \cite{groupsongeometries}, the first order theory of such a group would be unstable, but not much is known in a more abstract context.
 
The main reason for inventing the concept of Zariski-like quasiminimal structures was to provide a context where we hope to classify non-elementary geometries.
The main result in \cite{HrZi} is that every very ample Zariski geometry can be obtained from the Zariski topology of a smooth curve over an algebraically closed field.
In \cite{coverart}, we proved that the cover of the multiplicative group of an algebraically closed field of characteristic $0$ is Zariski-like.
An analogue for the result from \cite{HrZi} might be something in the lines that Zariski-like structures resemble the cover.
Moreover, Zariski-like structures might be applied to study Zilber's pseudo-exponentiation (see \cite{pseudo}) or some of the quantum algebras discussed by Zilber in \cite{quantum}.
 
To find the field, we use a generalization of E. Hrushovski's group configuration theorem that works in the non-elementary context and, in the $2$-dimensional case, yields an algebraically closed field, assuming that the model does not interpret a non-classical group.
In \cite{lisuriart}, we presented this theorem in the $1$-dimensional case and applied it to show that  in a Zariski-like structure, a group can be found as long as the canonical pregeometry obtained from the bounded closure operator is non-trivial. 
When proving our main theorem, we will use the elements of this group to find the configuration that gives the field.
 
In \cite{lisuriart}, we developed an independence notion that has all the usual properties of non-forking and is applicable in our setting.
In Section 2 of the present work, we will briefly introduce our setup and recall some results concerning the theory of independence presented in \cite{lisuriart}.
In Section 3, we give the generalization of Hrushovski's group configuration theorem in the non-elementary context, including the $2$-dimensional case.
In Section 4, we show that under the assumption that our model does not interpret non-classical groups, a $2$-dimensional group configuration yields an algebraically closed field.
Finding fields in non-elementary contexts has been previously studied by T. Hyttinen, O. Lessman, and S. Shelah in \cite{HLS}, and we apply some of the methods presented there.
Here, also results by T. Hyttinen from \cite{locmod} prove useful.
In Section 5, we apply the $2$-dimensional case of the group configuration theorem to prove our main theorem (Theorem \ref{MAIN}).
 
\section{The setting}

Throughout this paper, we will be working in the context of quasiminimal classes, studied in  \cite{monet} and \cite{kir}.
In \cite{lisuriart}, section 2 (see also \cite{lisuri}, chapter 2), techniques developed for abstract elementary classes (AECs) were used to obtain an independence calculus that has all the usual properties of non-forking and is applicable in this setting.
We now present the setting and the definitions and results needed in the rest of the paper.

\begin{definition}\label{quasiminclass}
Let $M$ be an $L$-structure for a countable language $L$, equipped with a pregeometry $cl$.
We say that $M$ is a \emph{quasiminimal pregeometry structure} if the following hold (tp denotes quantifier-free $L$-type):
\begin{enumerate}
\item (QM1) The pregeometry  is determined by the language.
That is, if $a$ and $a'$ are singletons and
$\textrm{tp}(a, b)=\textrm{tp}(a', b')$, then $a \in \textrm{cl}(b)$ if and only if $a' \in \textrm{cl}(b')$.
\item (QM2) $M$ is infinite-dimensional with respect to cl.
\item (QM3) (Countable closure property) If $A \subseteq M$ is finite, then $\textrm{cl}(A)$ is countable.
\item (QM4) (Uniqueness of the generic type) Suppose that $H,H' \subseteq M$ are countable closed subsets, enumerated so that $\textrm{tp}(H)=\textrm{tp}(H')$.
If $a \in M \setminus H$ and $a' \in M \setminus H'$ are singletons, then $\textrm{tp}(H,a)=\textrm{tp}(H',a')$ (with respect to the same enumerations for $H$ and $H'$).
\item (QM5) ($\aleph_0$-homogeneity over closed sets and the empty set)
Let $H,H' \subseteq M$ be countable closed subsets or empty, enumerated so that $\textrm{tp}(H)=\textrm{tp}(H')$,
and let $b, b'$ be finite tuples from $M$ such that $\textrm{tp}(H, b)=\textrm{tp}(H',b')$, and let $a$ be a singleton such that $a \in \textrm{cl}(H, b)$.
Then there is some singleton $a' \in M$ such that $\textrm{tp}(H, b,a)=\textrm{tp}(H',b',a').$
\end{enumerate}
\end{definition}
 
From a sufficiently large quasiminimal pregeometry structure $\M$, we can construct an AEC $\K(\M)$ (see \cite{monet}, section 2, and the end of section 2 in \cite{lisuriart}) such that  $\M$ is a monster model for the class.
Using techniques applicable for AECs we obtain for $\mathcal{K}(\M)$ an independence calculus that satisfies all the usual properties of non-forking (\cite{lisuriart}, section 2, see also \cite{lisuri}, chapter 2).
Since $\M$ is a quasiminimal pregeometry structure, it then turns out that the independence notion coincides with the one given by the pregeometry. 
In particular, $U$-ranks (see definitions 2.23 and 2.56 in \cite{lisuriart} coincide with pregeometry dimensions.
As usual, we write $A \da_B C$ for ``$A$ is independent from $C$ over $B$".
We write $A \da B$ for $A \da_\emptyset B$.
     
In the AEC setting, automorphisms of the monster model $\M$ play a crucial role.
We will denote the group of all these automorphisms by $Aut(\M)$.
If $A \subset \M$, we will write $Aut(\M/A)$ for the group of automorphisms fixing the set $A$ pointwise.

\begin{definition}
We define \emph{Galois types} as orbits of automorphisms of  $\M$.
For $A \subset \M$, we write $t^g(b/A)=t^g(c/A)$ if the tuples $b$ and $c$ have the same Galois type over $A$, i.e. if there is some automorphism $f \in Aut(\M/A)$ such that $f(b)=c$.
\end{definition}

\begin{remark}\label{le31}
By (e.g.) Lemma 3.1 in \cite{monet}, the quantifier-free first order types imply Galois types over closed sets and finite sets.

Moreover, as seen in \cite{lisuriart} (discussion after Definition 2.80), (QM4) and (QM5) will hold also if the quantifier-free $L$-types are replaced by Galois types (see also \cite{lisuri}, after 2.5.7). 
\end{remark}
 
\begin{definition}
We say that a set $A$ is \emph{bounded} if $\vert A \vert <  \vert \M \vert$.
\end{definition}

\begin{definition}
We say an element $a$ is in the \emph{bounded closure} of $A$, denoted $a \in \textrm{bcl}(A)$, if $t^g(a/A)$ has only boundedly many realizations, i.e. if the set 
$$\{x \in \M \quad \vert \quad t^g(x/A)=t^g(a/A)\}$$ is bounded.

We say that $a$ and $b$ are \emph{interbounded} if $a \in \textrm{bcl}(b)$ and $b \in \textrm{bcl}(a)$.
\end{definition}

\begin{remark}
By Remark 2.88 in \cite{lisuriart},   
if $\M$ is a quasiminimal pregeometry structure, then for any $A \subset \M$, it holds that $bcl(A)=cl(A)$.
In particular, $bcl$ gives a pregeometry on $\M$ and the independence calculus is obtained from it.
Thus, the situation is analogous to the first order strongly minimal case, where the pregeometry 
induced by the algebraic closure operator gives the independence calculus.
\end{remark}

\begin{definition}
We say that a tuple $a$ is \emph{Galois definable} from a set $A$, if it holds for every $f \in \textrm{Aut}(\mathbb{M}/A)$
that $f(a)=a$.
We write $a \in \textrm{dcl}(A)$, and say that $a$ is in the \emph{definable closure} of $A$.

We say that $a$ and $b$ are \emph{interdefinable} if $a \in \textrm{dcl}(b)$ and $b \in \textrm{dcl}(a)$.
We say that they are interdefinable over $A$ if  $a \in \textrm{dcl}(Ab)$ and $b \in \textrm{dcl}(Aa)$.
\end{definition}

\begin{definition}
We say a set $B \subset \M^n$ is \emph{Galois definable} over $A \subset \M$ if for every $f \in Aut(\M/A)$, it holds that $f(B)=B$.
\end{definition} 

It turns out that in our setting, bounded Galois definable sets are countable (by Lemmas 2.24 and 2.26 in \cite{lisuriart}).
 
Our main notion of type will  be that of the weak type rather than the Galois type:

\begin{definition}
Let $A \subset \M$.
We say $b$ and $c$ have the same \emph{weak type} over $A$, denoted $t(b/A)=t(c/A)$, if for every finite $A_0 \subseteq A$, it holds that $t^g(b/A_0)=t^g(c/A_0)$.
\end{definition}

An analogue for the strong types of the first-order context is provided by Lascar types.
 
\begin{definition}
Let $A$ be a finite set, and let $E$ be an equivalence relation on $M^{n}$, for some $n<\o$.
We say $E$ is \emph{$A$-invariant} if for all $f \in Aut(\M /A)$ and $a,b \in \M$, it holds that if $(a,b) \in  E$, then $(f(a),f(b)) \in E$.
We denote the set of all $A$-invariant equivalence 
relations that have only boundedly many equivalence classes by $E(A)$.
 
We say that $a$ and $b$ have the same \emph{Lascar type} over a set $B$, denoted  $Lt(a/B)=Lt(b/B)$, 
if for all finite
$A\subseteq B$ and all $E\in E(A)$, it holds that $(a,b)\in E$.
\end{definition}

\begin{remark}
By Lemma 2.37 in \cite{lisuriart}, Lascar types imply weak types.
 
Moreover, Lascar types are stationary by Lemma 2.46 in \cite{lisuriart}.
\end{remark} 

If $p$ is a stationary type over $A$ and $A \subset C$, we write $p \vert_C$ for the (unique) free extension of $p$ into $C$.
 
 \begin{definition}\label{genericeka}
 Let $B \subset \M$.
 We say an element $b \in B$ is \emph{generic} over some set $A$ if $dim(b/A)$ is maximal (among the elements of $B$). 
 The set $A$ is not mentioned if it is clear from the context.
 For instance, if $B$ is assumed to be Galois definable over some set $D$, then we usually assume $A=D$. 
  
Let $p=t(a/A)$ for some $a \in \M$ and $A \subset B$.
We say $b \in \M$ is a \emph{generic} realization of $p$ (over $B$) if $dim(b/B)$ is maximal among the realizations of $p$.
\end{definition}

\begin{definition}
We say that a sequence $(a_{i})_{i<\a}$ is \emph{indiscernible} over $A$
if every permutation of the sequence $\{ a_{i}\vert\ i<\a\}$
extends to an automorphism $f\in \textrm{Aut}(\M /A)$.
 
We say a sequence $(a_{i})_{i<\a}$ is \emph{strongly indiscernible} over $A$ if for all
cardinals $\kappa$, there are $a_{i}$, $\a\le i<\kappa$, such that
$(a_{i})_{i<\kappa}$ is indiscernible over $A$.
\end{definition}

\begin{lemma}\label{si}
Let $(a_i)_{i<\kappa}$ be a sequence independent over $b$ and suppose $Lt(a_i/b)=Lt(a_j/b)$ for all $i,j<\kappa$.
Then, it is strongly indiscernible over $b$.
\end{lemma}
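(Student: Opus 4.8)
The plan is to show that $(a_i)_{i<\kappa}$ behaves like a Morley sequence in the stationary Lascar type $p:=Lt(a_0/b)$, and that such a sequence is automatically indiscernible and strongly indiscernible over $b$. The tools are: stationarity of Lascar types together with the extension property and the symmetry and monotonicity of $\da$ (all from \cite{lisuriart}); the fact that Lascar types imply weak types (Lemma 2.37 of \cite{lisuriart}); and the elementary observation that over a \emph{finite} set weak types and Galois types coincide, since in the definition of the weak type one may take the finite subset to be the whole set.

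Before the induction I would isolate two facts. By the remark following the definition of Lascar types, $p$ is stationary, so it has a unique non-forking extension $p|_C$ over every $C\supseteq b$. Secondly, the relation ``$x,y$ have the same Lascar type over $b$'' is $b$-invariant: if $f\in\textrm{Aut}(\M/b)$ and $(x,y)\in E$ for every $E\in E(A)$ with $A\subseteq b$ finite, then, as each such $E$ is $A$-invariant and $f$ fixes $A$ pointwise, also $(f(x),f(y))\in E$ for all such $E$. Hence $f$ permutes the Lascar types over $b$, and if moreover $f(c)=d$ with $Lt(c/b)=Lt(d/b)=p$, then this permutation fixes $p$; in particular $Lt(f(x)/b)=p$ whenever $Lt(x/b)=p$.

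The core is the following claim, proved by induction on $n\ge 1$: if $c_1,\dots,c_n$ and $d_1,\dots,d_n$ satisfy $Lt(c_i/b)=Lt(d_i/b)=p$ for every $i$ and $\{c_1,\dots,c_n\}$, $\{d_1,\dots,d_n\}$ are each independent over $b$, then $t^g(c_1\cdots c_n/b)=t^g(d_1\cdots d_n/b)$. For $n=1$ this is immediate from $Lt(c_1/b)=Lt(d_1/b)$, Lemma 2.37, and the coincidence of weak and Galois types over the finite set $b$. For the step, the induction hypothesis provides $f\in\textrm{Aut}(\M/b)$ with $f(c_i)=d_i$ for $i\le n$; set $c'=f(c_{n+1})$. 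By the second fact, $Lt(c'/b)=p=Lt(d_{n+1}/b)$. Applying $f$ to $c_{n+1}\da_b c_1\cdots c_n$ yields $c'\da_b d_1\cdots d_n$, while $d_{n+1}\da_b d_1\cdots d_n$ holds by assumption; stationarity of $p$ then forces $Lt(c'/bd_1\cdots d_n)=p|_{bd_1\cdots d_n}=Lt(d_{n+1}/bd_1\cdots d_n)$, hence (finite base again) $t^g(c'/bd_1\cdots d_n)=t^g(d_{n+1}/bd_1\cdots d_n)$. Picking $g\in\textrm{Aut}(\M/bd_1\cdots d_n)$ with $g(c')=d_{n+1}$, the composite $g\circ f$ lies in $\textrm{Aut}(\M/b)$ and sends $c_1\cdots c_{n+1}$ to $d_1\cdots d_{n+1}$, proving the claim for $n+1$.

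Applying the claim to finite subtuples of $(a_i)_{i<\kappa}$ — every finite subset is independent over $b$ by monotonicity, and every entry has Lascar type $p$ over $b$ — shows that any two repetition-free tuples enumerating same-size subsets of the sequence have equal Galois type over $b$. Hence any permutation of $\{a_i:i<\kappa\}$, together with the identity on $b$, is a partial elementary map, which extends to an element of $\textrm{Aut}(\M/b)$ by homogeneity of the monster model; this is indiscernibility over $b$. For strong indiscernibility, given a cardinal $\lambda$, I would use the extension property and stationarity to pick, for $\kappa\le i<\lambda$, elements $a_i$ with $a_i\da_b\{a_j:j<i\}$ and $Lt(a_i/b)=p$; the enlarged sequence $(a_i)_{i<\lambda}$ is again independent over $b$ with all entries of Lascar type $p$ over $b$, hence indiscernible over $b$ by the previous step, and it extends the original. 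The step I expect to be the only real obstacle is the inductive step, precisely the point where stationarity is invoked over $bd_1\cdots d_n$: this needs the automorphism $f$ coming from the induction hypothesis to carry $p$ to $p$, which is exactly why the $b$-invariance of ``same Lascar type'' is recorded beforehand; everything else is routine bookkeeping with $\da$ and with the translation between Lascar, weak, and Galois types over finite sets.
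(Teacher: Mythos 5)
Your argument takes a genuinely different route from the paper's. The paper's proof chooses a model $\A$ with $b\in\A$ and $(a_i)_{i<\kappa}\da_b\A$, observes that the Lascar types $Lt(a_i/\A)$ still coincide, invokes Lemma 2.38 of \cite{lisuriart} to see the sequence is Morley over $\A$, and then cites Lemma 2.26 of \cite{lisuriart} (Morley sequences over models are strongly indiscernible). Your direct induction over the finite base $b$ — recording the $b$-invariance of ``same Lascar type over $b$'', using the coincidence of weak and Galois types over a finite set in the base case, and stationarity of $p$ over $bd_1\cdots d_n$ in the step — is correct as far as it goes, and does establish that any two equal-length repetition-free tuples enumerating subsets of the sequence have the same Galois type over $b$.

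The gap is in the step you label as routine. From ``finite subtuples all have the same Galois type over $b$'' you conclude that ``any permutation of $\{a_i:i<\kappa\}$ together with the identity on $b$ is a partial elementary map, which extends to an element of $\textrm{Aut}(\M/b)$ by homogeneity of the monster model''. But the paper's definition of indiscernibility demands, for each permutation $\pi$ of the entire (possibly uncountable) sequence, a single $f\in\textrm{Aut}(\M/b)$ with $f(a_i)=a_{\pi(i)}$ for all $i<\kappa$, and the monster $\M$ is only guaranteed to be \emph{model}-homogeneous: it extends isomorphisms between closed submodels, not arbitrary partial maps that merely preserve Galois types of finite subtuples. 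In a general AEC this upgrade fails, and nothing in the quasiminimal axioms makes it free; indeed the paper repeatedly (here and again in Lemma \ref{indiscpermu}) routes through Lemmas 2.24/2.26 of \cite{lisuriart} precisely because those lemmas, stated for Morley sequences over a \emph{model} base, carry the homogeneity content you are trying to invoke. To close the gap you would have to prove a dedicated extension lemma (say by a transfinite back-and-forth through countable closed sets using (QM4)/(QM5)) or cite the Morley-sequence lemma — and since that lemma works over models, you would then be reconstructing the paper's argument of passing to $\A$ in any case.
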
 

\begin{proof}
Let $\A$ be a model such that $b \in \A$ and $(a_i)_{i<\kappa} \da_b \A$.
Then, $Lt(a_i/\A)=Lt(a_j/\A)$, and by Lemma 2.38 in \cite{lisuriart}, the sequence is Morley over $\A$ in the sense of Definition 2.23 in \cite{lisuriart}, and thus strongly indiscernible over $\A$ by Lemma 2.26 in \cite{lisuriart}.
Hence, it is strongly indiscernible over $b$.
\end{proof}

\begin{lemma}\label{morleycofinal}
Let $\kappa$ be an uncountable cardinal such that $cf(\kappa)=\kappa$, and let  $(a_i)_{i<\kappa}$ be a sequence independent over $b$.
Then, there is some $X \subseteq \kappa$, cofinal in $\kappa$, such that $(a_i)_{i \in X}$ is strongly indiscernible over $b$.
\end{lemma}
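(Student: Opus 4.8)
The plan is to extract a subsequence indexed by a \emph{cofinal} set $X \subseteq \kappa$ on which the Lascar type over $b$ is constant, and then to quote Lemma \ref{si}. So the whole proof reduces to a pigeonhole argument together with a bound on the number of Lascar types over the finite set $b$.

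The first (and only non-formal) step is to observe that in our setting there are only countably many Lascar types of $n$-tuples over a finite set, where $n$ is the length of the tuples $a_i$. This is where quasiminimality enters: by the countable closure property (QM3) together with uniqueness of the generic type (QM4), which by Remark \ref{le31} hold for Galois types, there are only countably many Galois types, hence only countably many weak types, of $n$-tuples over any finite set; in our context this passes to Lascar types as well (see \cite{lisuriart}). Granting this, consider the map $i \mapsto Lt(a_i/b)$ from $\kappa$ into a countable set. Its fibres partition $\kappa$ into countably many pieces, so by regularity of $\kappa$ (a union of countably many sets each of size $<\kappa$ has size $<\kappa$) some fibre $X$ has $\vert X \vert = \kappa$; and a subset of the regular cardinal $\kappa$ of size $\kappa$ is cofinal in $\kappa$. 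Thus $X$ is cofinal in $\kappa$ and $Lt(a_i/b) = Lt(a_j/b)$ for all $i,j \in X$.

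Finally, the subsequence $(a_i)_{i \in X}$, being a subfamily of a family independent over $b$, is itself independent over $b$ by monotonicity of the independence calculus; combined with the constancy of the Lascar type over $b$ just arranged, Lemma \ref{si} applies directly and yields that $(a_i)_{i \in X}$ is strongly indiscernible over $b$. This is exactly the assertion of the lemma.

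The main obstacle is the bound on the number of Lascar types over $b$: since $\kappa$ may be as small as $\aleph_1$, it is genuinely necessary to know that there are only \emph{countably} many such types rather than merely boundedly many, and this is the place where the quasiminimal axioms (QM3, QM4) and Remark \ref{le31} do the real work. The hypothesis $cf(\kappa) = \kappa$ is then precisely what turns the pigeonhole into a cofinal — not merely unbounded — subset; for singular $\kappa$ the countably many fibres could all be bounded in $\kappa$, and the conclusion would fail.
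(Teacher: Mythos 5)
Your proof takes a genuinely different route from the paper's, which simply invokes Lemma 2.24 of \cite{lisuri} to extract a cofinal $X$ and a model $\A \supseteq b$ with $(a_i)_{i\in X}$ Morley over $\A$, and then applies Lemma 2.26 of \cite{lisuriart}. You instead do a direct pigeonhole on Lascar types over the finite set $b$ and feed the result into Lemma \ref{si}. Structurally your argument is clean, and the final step (monotonicity of independence plus Lemma \ref{si}) is correct.

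The gap is exactly where you flag it: the count of Lascar types over $b$. The chain ``countably many qf $L$-types over a finite set $\Rightarrow$ countably many Galois types $\Rightarrow$ countably many weak types'' is fine (using Remark \ref{le31} and that weak types over a finite set are Galois types), but the last implication to Lascar types is not justified. Lascar types \emph{refine} weak types (Lemma 2.37 of \cite{lisuriart} only gives the forward implication), so a single weak type could a priori split into boundedly many Lascar types, and ``boundedly many'' in a monster of size $>\aleph_1$ is a priori uncountable. To close the gap you would need something like: the set of Lascar classes within a fixed weak type over $b$ is (coded by) a bounded Galois-definable subset of $\M^{eq}$, hence countable; but the $\M^{eq}$ construction in the paper is built from $\emptyset$-invariant equivalence relations, while Lascar equivalence over $b$ is only $b$-invariant, so this needs an actual argument, not just a citation. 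The paper sidesteps this by working over a model $\A$ (via Lemma 2.24 of \cite{lisuri}), where one has simultaneous control over independence and the type count; you should either prove the countability of Lascar types over a finite set in this setting, or reduce to types over a model as the paper does.
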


\begin{proof}
By Lemma 2.27 in \cite{lisuriart}, there is a model $\A$ and a set $X \subseteq \kappa$ cofinal in $\kappa$ such that  $b \in \A$ and $(a_i)_{i \in X}$ is Morley over $\A$ (in the sense of Definition 2.26 in \cite{lisuriart}).  By Lemma 2.29 in \cite{lisuriart}, it is strongly indiscernible over $\A$ and hence over $b$.
\end{proof}

\subsection{$\M^{eq}$  and canonical bases}

In our arguments, we will need the notion of a canonical base.
Thus, we briefly discuss constructing $\M^{eq}$ and finding canonical bases in our setting. 
See \cite{lisuriart}, section 2.4 for details (also, \cite{lisuri}, section 2.4).  

Let $\mathcal{E}$ be a countable collection of $\emptyset$-invariant
equivalence relations $E$ such that $E \subseteq \M^n \times \M^n$ for some $n$.
We assume that the identity relation $=$ is in $\mathcal{E}$ (by Lemma 2.14 in \cite{lisuriart} there are only countably many Galois types over $\emptyset$).  
We let 
$$\M^{eq}= \{ a/E\vert\ a\in\M ,\ E\in\mathcal{E}\},$$ 
and we identify each element $a$ with $a/=$. 
For each $E \in \mathcal{E}$, we add to
our language a predicate $P_{E}$
with the interpretation $\{ a/E \, \vert \,a\in\M\}$
and a function $F_{E}: \M^n \to \M^{eq}$ (for a suitable $n$) such that $F_{E}(a)=a/E$.  
Then, we have all the structure of $\M$ on $P_{=}$.  

In \cite{lisuriart}, section 2.4 (Theorem 2.75 in particular, see also Theorem 2.70), 
it is shown that if all the conditions required to obtain the independence calculus are satisfied on $\M$,
then they are also satisfied on $\M^{eq}$.
By Lemma 2.87 in \cite{lisuriart}, they hold in the case that $\M$ is a quasiminimal pregeometry structure,
and thus we can extend the independence calculus to $\M^{eq}$.
Moreover, the perfect independence calculus can be obtained from the bounded closure operator (bcl) also in the case of $\M^{eq}$ (Theorem 2.70 in \cite{lisuriart}).  
Thus, the setting is analogous to the first order strongly minimal context also in the case of $\M^{eq}$,
and Theorem 2.70 in \cite{lisuriart} guarantees that dimensions can be calculated in a manner analogous to a pregeometry
(Theorem 2.70 (xii) states that if $a \in \textrm{bcl}(B) \setminus \textrm{bcl}(A)$, then $a \nda_A B$,
which implies a condition similar to the exchange property of pregeometries: if $a$ and $b$ are 1-dimensional and
$a \in \textrm{bcl}(Ab)\setminus \textrm{bcl}(A)$, then $b \in \textrm{bcl}(Aa)$).
In what follows, all dimensions will refer to $U$-ranks (see definitions 2.23 and 2.56 in \cite{lisuriart}),
but for the sake of calculations, they can be thought of as dimensions obtained from the bounded closure operator bcl. 

However, in our context, we cannot construct $\M^{eq}$ so that it is both $\o$-stable (in the sense of AECs) and admits elimination of imaginaries. 
Thus, when needed, we just pass into $(\M^{eq})^{eq}, ((\M^{eq})^{eq})^{eq},$ etc.
Lemma 2.87, Theorem 2.75 and Theorem 2.70 in \cite{lisuriart} guarantee that the pregeometry obtained from the bounded closure extends to all of these and 
always gives and independence calculus with all the usual properties of non-forking.
To simplify notation, we will denote all of these just by $\M^{eq}$.
  
Let $\M'$ be a $\vert \M' \vert$ -model homogeneous and universal structure such that $\M$ is a closed submodel of $\M'$ and $\vert \M' \vert > \vert \M \vert$.
We call  $\M'$ the \emph{supermonster}. 
Then, every $f \in \textrm{Aut}(\M)$ extends to some $f' \in \textrm{Aut}(\M')$.
We will usually abuse notation and write just $f$ for both maps. 
 
\begin{definition}
By a \emph{global type} $p$, we mean a maximal collection $\{p_A \, | \, A \subset \M \textrm{ finite }\}$ such that $p_A$ is a Galois type over $A$, and whenever $A \subseteq B$ and $b \in \M$ realizes $p_B$, then $b$ realizes also $p_A$.
We denote the collection of global types by $S(\M)$.
Moreover, we require that global types are consistent, i.e. that for each $p \in S(\M)$, there is some $b \in \M'$ such that $b$ realizes $p_A$ for every finite set $A \subset \M$ (note that the \emph{same} element $b \in \M'$ is required to realize $p_A$ for every $A$).
\end{definition}
  
Let $f \in \textrm{Aut}(\M^{eq})$, $p \in S(\M)$.
We say that $f(p)=p$ if for all finite $A,B \subset \M$ such that $f(B)=A$ and all $b$ realizing $p_B$, it holds that $t(b/A)=p_A$.

\begin{definition}
Let $p \in S(\M)$.
We say that $\alpha \in \M^{eq}$ is a \emph{canonical base} for $p$ if it holds for every $f \in \textrm{Aut}(\M^{eq})$ that $f(p)=p$ if and only if $f(\alpha)=\alpha$.
\end{definition}

In \cite{lisuriart} (Lemma 2.72 and discussion before), it is shown that the collection $\mathcal{E}$ of equivalence relations can be chosen so that every type has a canonical base in $\M^{eq}$.

\begin{definition}
Let $a \in \M$ and let $A \subset \M$.
Let  $b \in \M'$ be such that $Lt(a/A)=Lt(b/A)$ and $b \da_A \M$.
Let $p=t(b/\M)$.
By a \emph{canonical base} for $a$ over $A$, we mean a canonical base of $p$.
We write $\alpha=Cb(a/A)$ to denote that $\alpha$ is a canonical base of $a$ over $A$.
\end{definition}
 
 It can then be shown that in our setting, the canonical bases have the following usual properties.
 \begin{itemize} 
\item
Let $a \in \M$ and let $A \subset \M$ be a finite set.
Then, $Cb(a/A) \in \textrm{bcl}(A)$. (Lemma 2.74 in \cite{lisuriart})
  
\item
Let  $a \in \M$ and let $\alpha=Cb(a/A)$.
Then, $a \da_\alpha A$. (Lemma 2.76 in \cite{lisuriart})
 
\item
Let $\alpha=Cb(a/A)$.
Then, $t(a/\alpha)$ is stationary. (Lemma 2.78 in \cite{lisuriart})
 
\item 
Let $a \in \M$, and let $A$ and $B$ be sets such that $A \subsetneq B$, and let $\alpha \in \M^{eq}$.
If $a \da_A B$, then $\alpha=Cb(a/A)$ if and only if $\alpha=Cb(a/B)$. (Remark 2.75 in \cite{lisuriart})
\end{itemize}
 
\section{The group configuration}
 
To find a field in a Zariski-like structure, we will apply a theorem saying that a group can be interpreted in a model whenever there is a certain configuration of elements present.
The first-order version was part of E. Hrushovski's Ph.D. thesis and can be found in e.g. \cite{pi}, section 5.4.
If the group elements are of dimension $2$, then also a field can be found.
In \cite{lisuriart}, section 3, Hrushovski's group configuration theorem is adapted for quasiminimal classes (see also \cite{lisuri}, Chapter 3). However, only the case resulting in a one-dimensional group (the case $n=1$ in Definition \ref{conf}) is treated.
It generalizes quite easily to the two-dimensional case ($n=2$ in Definition \ref{conf}).
In this section, we will present the proof and refer the reader to \cite{lisuriart} (or \cite{lisuri}) for omitted details. 
 
We now present the configuration that will yield a group.

 \begin{figure}[htbp] \label{fig:multiscale}
\begin{center}
   \includegraphics[scale=0.7]{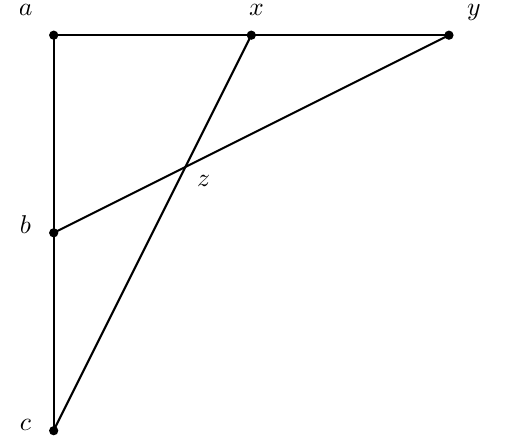}
   %\caption{Caption here.}
\end{center}
\end{figure}

\begin{definition}\label{conf}
By a \emph{strict bounded partial quadrangle} over  a finite set $A$ we mean a $6$-tuple of elements $(a,b,c,x,y,z)$ in $\M^{eq}$ such that for some $n \in \{1,2\}$,
\begin{enumerate}[(i)]
\item $dim(a/A)=dim(b/A)=dim(c/A)=n$, and $dim(x/A)=dim(y/A)=dim(z/A)=1$;
\item any triple of non-collinear points is independent over $A$ (see the picture);
\item $dim(a,b,c/A)=2n$;
\item $dim(a, x,y/A)=dim(b,z,y/A)=dim(c,z,x/A)=n+1$; 
\item $b$ is interbounded with $Cb(yz/Ab)$ over $A$.
\end{enumerate}
\end{definition}

\begin{remark}
If each of $a$,$b$,$c$,$x$,$y$,$z$ is replaced by an element interbounded with it over $A$, then  it is easy to see that the new $6$-tuple $(a',b',c',x',y',z')$ is also a strict bounded partial quadrangle over $A$. 
 
We say that this new partial quadrangle is \emph{boundedly equivalent} to the first one.

Also, if $n=1$, then (v) follows from the other conditions in the definition.
\end{remark}

\begin{definition}
We say that a group $G$ is \emph{Galois definable} over $A$ if $G$ and the group operation on $G$ are both Galois definable over $A$ as sets.
\end{definition}

\begin{definition}
Let $S$ be a Galois definable set (over $A$).
We say $S$ has \emph{unique generics} (over $A$) if all generic elements of $S$ have the same Galois type (over $A$).
\end{definition}

\begin{definition}
Let $G$ be a group, and let $q$ be a type.
We say $G$ \emph{acts generically} on the realizations of $q$, if the action $\sigma(u)$ is defined whenever $\sigma \in G$ and $u$ is a realization of $q$ that is generic over $\sigma$.
\end{definition}

We can now state the main theorem of this section.
We will prove it as a series of lemmas (thus, Lemmas \ref{interdefinable}-\ref{tulolemma} are part of the proof of the theorem).
 
\begin{theorem}\label{groupmain}
Suppose $A$ is a finite set, $(a,b,c,x,y,z)$ is a strict bounded partial quadrangle over $A$ and $t(a,b,c,x,y,z/A)$ is stationary.
Then, there is a group $G$ in $\M^{eq}$, Galois definable over some finite set $A' \subset \M$.
Moreover, $G$ has unique generics, and a generic element of $G$ has dimension $n$.

There is a stationary type $q$ such that $G$ acts generically on the realizations of $q$.
If $\sigma, \tau \in G$ and there is some $u$ realizing $q \vert_{\sigma, \tau}$ such that $\sigma(u)=\tau(u)$, then $\sigma=\tau$.
\end{theorem}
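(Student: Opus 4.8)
The plan is to follow Hrushovski's proof of the group configuration theorem in the form worked out for quasiminimal classes in \cite{lisuriart}, section 3, noting that the passage from $n=1$ to $n=2$ affects only the bookkeeping of dimensions. The first step is to read the configuration as a system of germs of generic correspondences. By (iv) together with (ii) and (iii), for a generic realization $z_0$ of $t(z/A)$ the pair $(b,z_0)$ determines a realization of $t(y/A)$ up to bounded closure and conversely, so $b$ codes a germ $f_b$ of an invertible generic correspondence from the realizations of $t(z/A)$ to those of $t(y/A)$; condition (v) says exactly that this germ is \emph{faithful}, i.e. that $b$ is interbounded with $Cb(yz/Ab)$ and hence recoverable, up to bounded closure, from $n$ generic instances of the correspondence. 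Likewise $a$ codes a germ $x\mapsto y$ and $c$ a germ $x\mapsto z$, and the dimension equality $dim(a,b,c/A)=2n$ together with the collinearity pattern says that, generically, $f_b$ agrees with the composite of $a$'s germ and the inverse of $c$'s germ. Passing to boundedly equivalent quadrangles and to independent copies of the configuration, one shows in the usual way that the family $\mathcal{F}=\{f_{b'}\mid b'\models t(b/A)\}$ is closed, generically and up to bounded equivalence, under composition and inversion and that composition on it is generically associative; stationarity of $t(a,b,c,x,y,z/A)$ is what makes these germs and operations well defined on weak and Lascar types.

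The second step is to apply the group chunk theorem. In \cite{lisuriart}, section 3, this is proved in the quasiminimal setting for $n=1$, and nothing there uses $n=1$ beyond the dimension count, so it transfers verbatim to $n=2$ and produces a group $G$ in $\M^{eq}$, Galois definable over a finite set $A'\supseteq A$ (obtained by adjoining to $A$ canonical parameters for $\mathcal{F}$ and for the composition law), whose generic type is the type of a generic member of $\mathcal{F}$ and all of whose elements are faithful germs of generic correspondences. Since that generic type is the image of the stationary type $t(b/A)$ under a bounded-to-bounded correspondence, it is itself stationary, so $G$ has unique generics; and by faithfulness a generic element of $G$ is interbounded over $A'$ with a realization of $t(b/A)$, hence has dimension $n$. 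The action of germs on points --- for $n=2$, on $A$-independent pairs of points, with germs acting diagonally --- gives a generic action of $G$ on the realizations of a stationary type $q$ of dimension $n$; the realizations of $q$ form a generic principal homogeneous space for $G$, since a single generic realization of $q$ together with its image recovers the acting germ up to bounded closure (this is where condition (v) and the computations $dim(b/Azy)=n-1$, $dim(b/Azyz'y')=0$ are used).

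For the final assertion, suppose $\sigma,\tau\in G$ and $u\models q\vert_{\sigma,\tau}$ with $\sigma(u)=\tau(u)$, and put $g=\tau^{-1}\sigma$, so that $g$ fixes $u$ and $u\da_{A'}g$. Writing $u$ as the ($n$ mutually $A$-independent) generic points it comprises, $g$ fixes each of them and these remain generic over $A'g$; since $g$ is recovered up to bounded closure from $n$ generic instances of its graph and the instances at these points are the trivial ones, $g\in\mathrm{bcl}(A'u)$, whence $g\in\mathrm{bcl}(A')$ because $u\da_{A'}g$. But an element of $G$ that is bounded over $A'$ and fixes one generic realization of $q$ fixes every generic realization (by $\aleph_0$-homogeneity, using stationarity of $q$), hence is the identity; so $g=e$ and $\sigma=\tau$. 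The main obstacle I expect is the second step carried out in full: verifying that $\mathcal{F}$ really is closed under generic composition and inversion and that composition on it is generically associative, tracking independence and the stationarity of weak and Lascar types at every stage, and confirming that the $n=1$ group chunk construction of \cite{lisuriart} goes through unchanged for $n=2$ --- this is precisely the content of Lemmas \ref{interdefinable}--\ref{tulolemma}. A lesser difficulty is choosing $q$ so that one generic realization determines a germ, which for $n=2$ rests on the dimension computations above rather than the trivial $n=1$ bookkeeping.
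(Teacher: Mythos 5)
Your outline is essentially the same as the paper's: pass to boundedly equivalent and independent copies, read $b$ (and its independent copies) as a germ of an invertible correspondence via its canonical base (using (v) for faithfulness), establish generic closure under composition and inversion and the independence bookkeeping (the content of Lemmas \ref{interdefinable}--\ref{tulolemma}, whose $n=1$ arguments transfer to $n=2$ with only dimension arithmetic changing), then apply the group-chunk machinery to produce $G$, its unique generic type, and the generic action on $q$.

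One small divergence: the final assertion --- that $\sigma(u)=\tau(u)$ for some $u\models q|_{\sigma\tau}$ forces $\sigma=\tau$ --- is in the paper immediate from the construction, since $G$ is defined as the quotient $G'/E$ of germ pairs by exactly the relation ``agree on a generic realization of $q_1$''. Your argument instead tries to re-derive it by decomposing $u$ into $n$ independent one-dimensional components and claiming $g$ is recovered from $n$ instances of its graph; but $u$ is a single realization of the two-dimensional type $q_1=t(yx'/a'c'')$, not $n$ independent instances of a graph, and the fact that a germ is bounded over enough generic graph-instances (the paper's Lemma \ref{koodimelk}) is itself proved \emph{using} the final assertion of this theorem, so the route you sketch risks circularity. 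The clean way is simply to observe that the relation $E$ defining the quotient already says this.
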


\begin{proof} 
As in \cite{lisuriart} (proof of Theorem 3.9), we may without loss assume $A=\emptyset$.
We begin our proof by replacing 
the tuple $(a,b,c,x,y,z)$ with one boundedly equivalent with it so that $z$ and $y$ become interdefinable over $b$.
  
For each $n$ we first define an equivalence relation $E^n$ on $\M^n$ so that $uE^nv$ if and only if $\textrm{bcl}(u)=\textrm{bcl}(v)$.
Similarly, define an equivalence relation $E^{*}$ on $\M^{eq}$ so that $uE^{*}v$ if and only if $\textrm{bcl}(u)=\textrm{bcl}(v)$ (in $\M^{eq}$).
By Lemma 3.10 in \cite{lisuriart}, the element $u/E^n$ is interdefinable with $(u/E^n)/E^*$.
Moreover, $dim(u)=dim(u/E^n)=dim((u/E^n)/E^*).$
Replace now $x$ with $x/E^n$, $y$ with $y/E^n$ and $z$ with $z/E^n$.
The new elements are interbounded with the old ones, so we still have a strict bounded partial quadrangle. 
From now on, denote this new $6$-tuple by $(a,b,c,x,y,z)$.
  
Let $a' \in \M$ be such that $Lt(a'/b,z,y)=Lt(a/b,z,y)$
and $a' \da  abcxyz$.  
Then, there are tuples $c',x'$ such that $\textrm{Lt}(a',c',x'/ b,z,y)=Lt(a,c,x/b,z,y)$, and  $(a',b,c',x',y,z)$ is a strict bounded partial quadrangle over $\emptyset$.
Similarly, we find an element $c'' \in \M$ such that $c'' \da abcxyza'c'x'$  and  elements $a'', x''$ so that $(a'',b,c'',x'',y,z)$ is a strict bounded partial quadrangle over $\emptyset$.
The below picture may help the reader.
\begin{figure}[htbp] \label{fig:multiscale}
\begin{center}
   \includegraphics[scale=0.9]{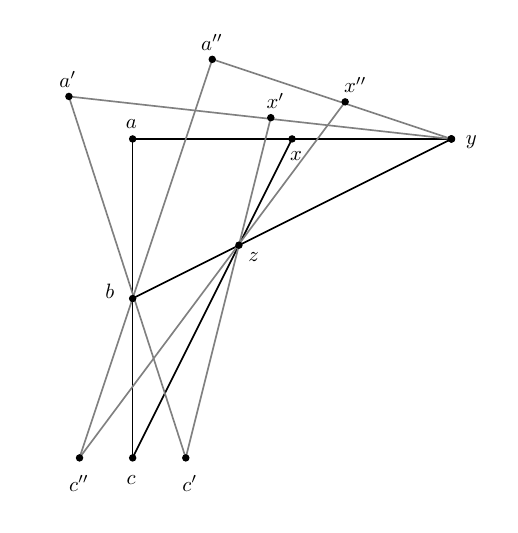}
   %\caption{Caption here.}
\end{center}
\end{figure}

We will add the elements $a'$ and $c''$ as parameters in our language, but this will affect the closure operator and the independence notion.
In our arguments, we will be doing calculations both in the set-up we have before adding these parameters and the one obtained after adding them.
We will use the notation cl and $\da$ for the setup before adding the parameters, and $\textrm{cl}^*$ and $\da^*$ for the setup after adding the parameters, i.e. for any sets $B,C,D$, $\textrm{cl}^*(B)=\textrm{cl}(B,a',c'')$ and 
$B \da^{*}_C D$ if and only if $B \da_{Ca' c''} D$.
Similarly, we write $u \in \textrm{dcl}^*(B)$ if and only if $u \in \textrm{dcl}(Ba'c'')$ and use the notation $\textrm{Cb}^*(u/B)$ for $\textrm{Cb}(u/Ba'c'')$.

\begin{lemma}\label{interdefinable}
The tuples $yx'$ and $zx''$ are interdefinable over $a''bc'$ in $\mathbb{M}^{eq}$ after adding the parameters $a'$ and $c''$ to the language.
\end{lemma}
 
\begin{proof}
As in \cite{lisuriart}, Claim 3.12, we see that if for some $z'$, it holds that $t(z'/byc'x')=t(z/byc'x')$, then both $z$ and $z'$ are interbounded (with respect to bcl) with $Cb(b,y/c',x')$ and thus with each other.
It follows that $u=z/E^*$ if and only if there is some $w$ such that $t(w/byc'x')=t(z/byc'x')$ and $w/E^*=u$.
From this, it follows that $z/E^* \in \textrm{dcl}(byc'x')$.
In the beginning of the proof, we replaced $z$ by $z/E^n$, and by Lemma 3.10 in \cite{lisuriart}, it is interdefinable with $z/E^*$.
Thus, $z \in \textrm{dcl}(byc'x')$.
  
For $zx'' \in \textrm{dcl}^{*}(a''bc'yx')$, it suffices to show that $x'' \in \textrm{dcl}^{*}(a''bc'yx'z)$.
But as above, if there is some $x^*$ such that $t(x^*/a''yzc'')=t(x''/a''yzc'')$, then $x^*$ and $x'$ are interbounded, and thus $x''/E^* \in \textrm{dcl}(a''c''yz) \subseteq \textrm{dcl}^{*}(a''bc'x'yz)$ (note that $\textrm{dcl}^*$ is defined with $c''$ as a parameter), and hence by Lemma 3.10 in \cite{lisuriart},  $x'' \in \textrm{dcl}^*(a''bc'x'yz)$.
Similarly, one proves that $yx' \in \textrm{dcl}^*(a''bc'zx'')$.
\end{proof}

Let $q_1=t(yx'/a'c'')$, $q_2=t(zx''/a'c'')$.
We will consider $Cb(yx',zx''/a''bc')$ as a function from $q_1$ to $q_2$.
To see precisely how this is done, we need to introduce some concepts.
 
\begin{definition} 
Suppose $p$ and $q$ are stationary types over some set $B$.
By a \emph{germ of an invertible definable function} from $p$ to $q$, we mean a Lascar type $r(u,v)$ over some finite set $C$ containing $B$, such that 
if $(x,y)$ realizes $r \vert_D$, then $x$ realizes $p \vert_D$, $y$ realizes $q \vert_D$, $x \in \textrm{dcl}(y,D)$, and $y \in \textrm{dcl}(x,D)$.
\end{definition} 
 
We will denote germs of functions by the Greek letters $\sigma, \tau$, etc.
We note that the germs can be represented by elements in $\M^{eq}$. Just represent the germ determined as above by some Lascar type $r$, by some canonical base of $r$.
If $\sigma$ is this germ and $u$ realizes $p \vert_\sigma$, then $\sigma(u)$ is the unique element $v$ such that $(u,v)$ realizes $r \vert_\sigma$.
Note that if $a$ realizes $p\vert_B$ and $\sigma \in B$, then $\sigma(a)$ realizes $q \vert_\sigma$, and as $\sigma(a) \da_\sigma B$, the element $\sigma(a)$ realizes $q \vert_B$.

We note that the germs can be composed.
Suppose $q'$ is another stationary type over $B$, $\sigma$ is a germ from $p$ to $q$ and $\tau$ is a germ from $q$ to $q'$.
Then, by $\tau.\sigma$ we denote a germ from $p$ to $q'$ determined as follows.
Let $u$ realize $p \vert_{\sigma, \tau}$.
Then, we may think of $\tau.\sigma$ as some canonical base of $Lt((u, \tau(\sigma(u)))/\sigma, \tau)$.
We note that $t(u, \tau(\sigma(a))/B\sigma \tau)$ is stationary since $t(u/B \sigma \tau)$ is stationary as a free extension of a stationary type and since $\tau(\sigma(u))$ is definable from $u$, $\sigma$ and $\tau$.
Thus, $\tau.\sigma \in \textrm{dcl}(\sigma, \tau)$, and the notation is meaningful.
 
We will do a small trick that makes the types $q_1$ and $q_2$ stationary, which will allow us to apply the above methods and consider $Cb(yx',zx''/a''bc')$ as a germ of an invertible definable function from $q_1$ to $q_2$.
First we show that we may without loss suppose $b=Cb(yx',zx''/a''bc')$.
Then, after making the types stationary, we will prove that for independent  $b_1, b_2$ realizing $\textrm{tp}(b/a'c'')$, the composition $b_1^{-1}.b_2$ is a germ of an invertible definable function from $q_1$ to $q_1$.  

Note that since $a'' \in \textrm{bcl}(bc'') \subseteq \textrm{bcl}^{*}(b)$ and $c' \in \textrm{bcl}(a'b) \subseteq \textrm{bcl}^{*}(b)$,
 we have $\textrm{Cb}(yx',zx''/a''bc')=\textrm{Cb}^*(yx',zx''/b)$.
Thus, from Lemma \ref{interdefinable}, it follows that the tuples $yx'$ and $zx''$ are interdefinable over $\textrm{Cb}^*(yx',zx''/b)$ after adding the parameters $a''c$ to the language.
We will eventually view $\textrm{Cb}^*(yx',zx''/b)$ as a germ of a function taking $yx' \mapsto zx''$.

We claim that after adding the parameters, $b$ is interbounded with $\textrm{Cb}^*(yx',zx''/b)$.
Denote $\alpha=\textrm{Cb}^*(yx',zx''/b)$ and $\alpha'=Cb(yz/b)$.
By the choice of $a'$ and $c''$, we have $yz \da_b a'c''$, so we may assume $\alpha'=Cb^*(yz/b)$.
Then, $\alpha' \in \textrm{bcl}(\alpha)$.
By (v) in Definition \ref{conf}, $b \in \textrm{bcl}(\alpha')$, and the claim follows.
Thus, we may without loss assume that $b=\textrm{Cb}^*(yx',zx''/b)$.

We now do the trick to make $q_1$ and $q_2$ stationary.
To simplify notation, denote for a while $d=(a,b,c,x,y,z,c',x',a'', x'')$.
Choose now a tuple $d' \in \M^{eq}$ such that $Lt(d'/a'c'')=Lt(d/a'c'')$ and $d' \da_{a'c''} d$.
Then, there is some $d'' \in \M$ such that $d'=F(d'')$ for some definable function $F$ (composition of functions of the form $F_E$, where $E$ is some equivalence relation) and $d''  \da_{a'c''} d$.
Now, for any subsequence $e \subseteq d$, the type $t(e/a'c''d'')$ is stationary.
Indeed, there is some subsequence $e' \subset d''$ such that $Lt(F(e')/a'c'')=Lt(e/a'c'')$ for some definable function $F$.
Thus, $t(e/a'c''e')$ (and hence $t(e/a'c''d'')$) determines $Lt(e/a'c'')$. 

We add the tuple $d''$ as parameters to our language.
Since it is independent over $a'c''$ from everything that we will need in the independence calculations that will follow, the calculations won't depend on whether we have added $d''$ or not.
Thus, we may from now without loss assume $d''=\emptyset$ to simplify notation.

Let $r=t(b/a',c'')$, and note that it is stationary since we added $d''$ to the language.
If $b_1, b_2$ realize $r$, then by $b_1^{-1}.b_2$ we mean the germ of the invertible definable function from $q_1$ to $q_1$ obtained by first applying $b_2$, then $b_1^{-1}$.
In other words, let $y_1 x_1'$ realize $q_1 |_{b_1b_2}$, and let $z_1x_1''=b_2.(y_1 x_1')$, a realization of $q_2|_{ b_1b_2}$.
Let $y_2 x_2'=b_1^{-1}.(z_1x_1'')$ (i.e. $z_1x_1''=b_1.(y_2 x_2')$).
We may code the germ $b_1^{-1}.b_2$ by some canonical base of $t(y_1 x_1',y_2 x_2'/b_1,b_2,a',c'')$, i.e.
we will have $b_1^{-1}.b_2=\textrm{Cb}^*(y_1 x_1',y_2 x_2'/b_1,b_2)$.
At this point, we fix the type of this canonical base.
As noted before, we have
$b_1^{-1}.b_2 \in \textrm{dcl}^*(b_1,b_2)$.

\begin{lemma}\label{vapaus}
Let $b_1$, $b_2$ realize $r$ ($=t(b/a'c'')$), and let $b_1 \da^* b_2$. 
Then, $b_1^{-1}.b_2 \da^* b_i$ for $i=1,2$.
In particular, $dim(b_1^{-1}.b_2/a'c'')=n.$
\end{lemma}

\begin{proof}
Without loss of generality, $b_2=b$ and $b_1 \da^* a,b,c,x,y,z,c',x',a'',x''$.

From the choice of the new elements, it can be calculated that $b \da^* cxzx''$ (see \cite{lisuriart} (or \cite{lisuri}), proof of Lemma 3.13 for details).  
By stationarity of $r$, we have  $t(b/a'c''cxzx'')=t(b_1/a'c''cxzx'')$.  
Hence, there are elements $a_1, y_1, c_1',x_1',a_1''$ so that
$$t(a_1,b_1,c,x,y_1,z,c_1',x_1',a_1'',x''/a'c'')=t(a,b,c,x,y,z,c',x',a'',x''/a'c'').$$
To visualize this, think of the picture just before Lemma \ref{interdefinable}.
In the picture, keep the lines $(c,x,z)$ and $(c'',z,x'')$ fixed pointwise and move $b$ to $b_1$ by an automorphism fixing $a'c''$.
As a result, we get another similar picture drawn on top of the first one, with new elements $a_1, y_1, c_1'$ and $a_1''$ in the same configuration with respect to the fixed points as $a,y,c$ and $a''$ in the original picture.

We now present five auxiliary claims needed in the argument. The proofs can be found in \cite{lisuriart} or \cite{lisuri}  (Claims 3.14-3.18). 

\begin{enumerate}[(i)]
\item   
$a a_1 b b_1 \downarrow^* yx'.$ 
\item $y_1 x_1' \in \textrm{bcl}^*(a,a_1,y).$
 
\item $y_1x_1'=(b_1^{-1}.b)(yx')$. 
 
\item $a a_1 \downarrow^* b.$
 
\item
$aa_1 \downarrow^* b_1.$
\end{enumerate}

Denote $\sigma=b_1^{-1}.b$.
By (i) $yx' \downarrow_{aa_1}^* a a_1 b b_1$.
Thus, by (ii),  we get $yx'y_1x_1' \downarrow _{aa_1}^* a a_1 b b_1$.
On the other hand, by (i), $yx' \downarrow_{bb_1}^* a a_1 b b_1$.
By Claim (iii), $y_1x_1' \in \textrm{bcl}^*(yx',b,b_1)$, so $yx'y_1x_1' \downarrow_{bb_1}^* a a_1bb_1$.
Since $\sigma=\textrm{Cb}^*(yx',y_1x_1'/b,b_1)$, we also have 
$$\sigma=\textrm{Cb}^*(yx',y_1x_1'/a,a_1,b,b_1).$$
So, $\sigma \in \textrm{bcl}^*(a,a_1)$ since $yx'y_1x_1' \downarrow _{aa_1}^* a a_1 b b_1$.
By (iv) and (v), $\sigma \downarrow^* b$ and $\sigma \downarrow^* b_1$.

Since $\sigma \in \textrm{bcl}^*(bb_1)$, we have $2n=dim(b_1b\sigma/a'c'')$, and it follows that $dim(\sigma/a'c'')=n$.
\end{proof}

Denote now $\sigma=b_1^{-1}.b_2$ (from Lemma \ref{vapaus}) and let $s=t(\sigma/a'c'')$ (note that $t(\sigma^{-1}/a'c'')=s$ also).

\begin{lemma}\label{ykkonen}
Let $\sigma_1, \sigma_2$ be realizations of $s$ such that $\sigma_1 \da^* \sigma_2$.
Then, $\sigma_1.\sigma_2$ realizes $s|_{\sigma_i}$ for $i=1,2$.
\end{lemma}

\begin{proof}
As 1. in the proof of Lemma 4.8 in \cite{pi} (see \cite{lisuri}, Lemma 3.19, for details). 
\end{proof}

Let $G$ be the group of germs of functions from $q_1$ to $q_1$ generated by $\{\sigma \, | \, \sigma \textrm{ realizes } s\}$ (note that this set is closed under inverses and thus indeed is a group). 

\begin{lemma}\label{tulolemma}
For any $\tau \in G$, there are $\sigma_1, \sigma_2$ realizing $s$ such that $\tau=\sigma_1.\sigma_2$ and $\tau \da^* \sigma_1$. 
\end{lemma}

\begin{proof}
It is enough to show that if $\tau_i$ realize $s$ for $i=1,2,3$, then there are $\sigma_1, \sigma_2$ realizing $s$ so that $\tau_1.\tau_2.\tau_3=\sigma_1.\sigma_2$ and $\tau_1.\tau_2.\tau_3 \da^* \sigma_1$.
Let $\sigma$ realize $s|_{\tau_1 \tau_2 \tau_3}$.
Now, $\sigma^{-1}.\tau_2 \downarrow_{\tau_2}^* \tau_1 \tau_2 \tau_3$.
By Lemma \ref{ykkonen}, $\sigma^{-1}.\tau_2$ realizes $s |_{\tau_2}$, and thus
$\sigma^{-1}.\tau_2 \downarrow^* \tau_1 \tau_2 \tau_3$.
By Lemma \ref{ykkonen}, $(\sigma^{-1}.\tau_2).\tau_3$ and $\tau_1.\sigma$ realize $s$.
Choosing $\sigma_1=\tau_1. \sigma$ and $\sigma_2=\sigma^{-1}.\tau_2.\tau_3$, we get $\sigma_1.\sigma_2=\tau_1.\tau_2.\tau_3$.
Using the choice of $\sigma$ and Lemma \ref{ykkonen}, it is easy to see that $\tau_1.\tau_2.\tau_3 \da^* \sigma_1$.
\end{proof}

Consider the set 
$$G'=\{\sigma_1.\sigma_2 \, | \, \textrm{$\sigma_1, \sigma_2$ are realizations of $s$}\}.$$
It is clearly Galois definable over $a'c''$.
Let $E$ be the equivalence relation such that for $\gamma_1, \gamma_2 \in G'$, we have $(\gamma_1, \gamma_2) \in E$ if and only if  for all (some) $u$ realizing $q_1 \vert_{\gamma_1 \gamma_2}$, it holds that $\gamma_1(u)=\gamma_2(u)$.
Then, $G=G'/E$, and $G$ is  Galois definable over $a'c''$.
 
To complete the proof of Theorem \ref{groupmain}, it remains to show that a generic element of $G$ has dimension $n$ over $a'c''$ and that $G$ has unique generics.
Suppose $\tau \in G$.
By Lemma \ref{tulolemma}, there are $\sigma_1$, $\sigma_2$ realizing $s$ such that $\tau=\sigma_1.\sigma_2$ and $\tau \da^* \sigma_1$.
Now,
$$dim(\sigma_1.\sigma_2/a'c'')=dim(\sigma_1.\sigma_2/\sigma_1 a'c'')\le dim(\sigma_2/\sigma_1 a'c'') \le n,$$
where the first inequality follows from the fact that $\sigma_2 \in \textrm{bcl}^*(\sigma_1, \sigma_1.\sigma_2)$, and the second one from Lemma \ref{vapaus} and the choice of the type $s$.  
It is easy to see that equality holds if and only if $\sigma_1 \da^* \sigma_2$.
Thus, generic elements have dimension $n$ and by Lemma \ref{ykkonen}, they all have the same type. 
This concludes the proof of the theorem.
\end{proof}     

From now on, we will call the configuration given in Definition \ref{conf} the \emph{group configuration}. 

\section{Properties of the group}
 
In this section, we take a closer look at the group obtained from Theorem \ref{groupmain}.
We first discuss non-classical groups, which are defined by the following two definitions from \cite{HLS}: 

\begin{definition}
An infinite group $G$ \emph{carries an $\omega$-homogeneous pregeometry} if there is a closure operator cl on the subsets of $G$ such that $(G, \textrm{cl})$ is a pregeometry and $dim_{cl}(G)=\vert G \vert$, and whenever $A \subset G$ is finite and $a,b \in G \setminus \textrm{cl}(A)$, then there is an automorphism of $G$ preserving cl, fixing $A$ pointwise and sending $a$ to $b$.
\end{definition}

\begin{definition}
We say that a group is \emph{non-classical} if it is non-Abelian and carries an $\omega$-homogeneous pregeometry.
\end{definition}

It is an open question whether non-classical groups exist.
Throughout this paper, we will assume that the monster model $\M$ we are working in does not interpret non-classical groups.  
In this section, we will show that under this assumption, if $n=1$ in Definition \ref{conf}, then the group $G$ obtained from Theorem \ref{groupmain} is Abelian, and if $n=2$, then an algebraically closed field can be interpreted in $\M$. 
The arguments were originally presented in the first-order case by E. Hrushovski, and the non-elementary case is treated in \cite{HLS} and \cite{locmod}.

For $f,g \in G$, we use $f.g$ to denote group multiplication.
However, since writing $f$ and $g$ as a tuple using $fg$ can be confusing, we write $f,g \da x$ when we mean that $fg$ as a tuple is independent from $x$.
If we mean that $f \da x$ and $g \da x$ (but not necessarily $fg \da x$), we will mention it separately.
By Theorem \ref{groupmain}, the group $G$ is Galois definable over some finite set $A'$.
To simplify notation, we will from now on (without loss) assume $A'=\emptyset$. 

\begin{theorem}\label{1abelian}
Let $\M$ be a quasiminimal pregeometry structure that does not interpret a non-classical group, and suppose that $G$ is a Galois definable group interpretable in $\M$.  
Suppose $G$ has unique generic type and the generic elements of $G$ are of dimension $1$ (with respect to the pregeometry induced by the bounded closure operator bcl).
Then, $G$ is Abelian.
\end{theorem}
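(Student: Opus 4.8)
The plan is to exploit the fact that $G$ acts generically on the realizations of a stationary type $q$ (namely $q_1$), with the faithfulness property from Theorem~\ref{groupmain}: if $\sigma(u)=\tau(u)$ for some $u$ realizing $q\vert_{\sigma,\tau}$, then $\sigma=\tau$. Since generics of $G$ have dimension $1$ and $G$ has unique generics, the generic type of $G$ is essentially the weak type of a generic $\sigma$ over $\emptyset$, and it behaves like a "quasiminimal" object. First I would argue that if $G$ is \emph{not} Abelian, then one can equip $G$ with an $\omega$-homogeneous pregeometry, contradicting the hypothesis that $\M$ interprets no non-classical group. The natural candidate for the closure operator on $G$ is the one induced by $\mathrm{bcl}$ in $\M^{eq}$ restricted to $G$: set $\mathrm{cl}_G(X)=\mathrm{bcl}(X)\cap G$. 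Since generics of $G$ have dimension $1$, $(G,\mathrm{cl}_G)$ has dimension $|G|$, and (QM1)--(QM5) transported to $\M^{eq}$ together with unique generics give the required homogeneity: any two non-algebraic (over a finite $A\subseteq G$) elements $a,b$ of $G$ have the same weak type over $A$, hence (using $\aleph_0$-homogeneity and the monster) there is an automorphism of $\M^{eq}$ fixing $A$ pointwise and sending $a$ to $b$; its restriction to $G$ is the desired automorphism of the pregeometry. So the \emph{only} thing to rule out is $G$ non-Abelian.

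The core of the argument, following Hrushovski (and \cite{HLS}, \cite{locmod}), is a stabilizer/commutator computation. Take $\sigma,\tau$ independent generics of $G$ and a generic $u$ of $q$ with $u\da^*\sigma,\tau$. Consider the three generics $\sigma$, $\tau$, $\sigma.\tau$ and the points $u$, $\tau(u)$, $\sigma(\tau(u))$; conjugation gives $\sigma.\tau.\sigma^{-1}$ acting on $\sigma(u)$. The key step is to show that if $G$ is non-Abelian then the commutator map produces "too many" generic germs: one shows that for generic independent $\sigma,\tau$, the element $[\sigma,\tau]=\sigma.\tau.\sigma^{-1}.\tau^{-1}$ is non-trivial on a generic point, and by varying $\tau$ one gets a definable family of generic elements of $G$ fixing a fixed generic $u$ — but faithfulness of the action (the last clause of Theorem~\ref{groupmain}) forces any element fixing a sufficiently generic point to be the identity, which after a dimension count bounds $\dim([\sigma,\tau]/\sigma)$ strictly below $1$, i.e. $[\sigma,\tau]\in\mathrm{bcl}^*(\sigma)$. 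Symmetrically $[\sigma,\tau]\in\mathrm{bcl}^*(\tau)$, and since $\sigma\da^*\tau$ this yields $[\sigma,\tau]\in\mathrm{bcl}^*(\emptyset)$; as the generic type of $G$ is non-algebraic and $[\sigma,\tau]$ depends on $\sigma,\tau$ in a way compatible with genericity, one concludes $[\sigma,\tau]=e$ for independent generics. Finally, a standard genericity argument (any element of $G$ is a product of two generics, and any two elements can be moved to be generic and independent by multiplying by a suitable independent generic) upgrades "independent generics commute" to "$G$ is Abelian".

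Concretely, the steps in order are: (1) set up $\mathrm{cl}_G$ and verify $(G,\mathrm{cl}_G)$ is a pregeometry of dimension $|G|$ with the $\omega$-homogeneity property, so that non-classicality would follow from non-commutativity; (2) using Lemmas~\ref{ykkonen}, \ref{tulolemma} and the faithful generic action, show that an element of $G$ fixing a generic realization of $q$ (generic over that element) is the identity; (3) run the commutator dimension count to get $[\sigma,\tau]\in\mathrm{bcl}^*(\sigma)\cap\mathrm{bcl}^*(\tau)$ for $\sigma\da^*\tau$ generic, hence $[\sigma,\tau]=e$; (4) pass from independent generics to all of $G$. The main obstacle I anticipate is step~(2)--(3): making the faithfulness clause interact correctly with the germ formalism of Section~3 (the action is only \emph{generic}, so one must be careful that all the relevant points and germs are mutually independent in the right way before invoking $\da^*$-calculus and the bounded-closure bounds), and verifying that the $\omega$-homogeneity in step~(1) genuinely follows from (QM4)/(QM5) as transported to $G\subseteq\M^{eq}$ rather than needing full homogeneity of $\M^{eq}$, which it does not have. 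The commutator computation itself is routine once the independence bookkeeping is set up; it is precisely the bookkeeping, plus checking that "non-classical group interpreted in $\M$" is exactly what a non-Abelian such $G$ would witness, that requires care.
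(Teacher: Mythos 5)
Your step~(1) is the paper's entire proof, and it is already complete as stated: once you equip $G$ with the pregeometry induced by bcl and verify $\omega$-homogeneity via unique generics and $\aleph_0$-homogeneity of $\M^{eq}$, the hypothesis that $\M$ interprets no non-classical group immediately yields that $G$ is Abelian, because a non-Abelian group carrying an $\omega$-homogeneous pregeometry \emph{is} a non-classical group by definition. The sentence ``so the only thing to rule out is $G$ non-Abelian'' signals a misreading of the role of the hypothesis: there is nothing left to rule out, since ruling out non-Abelianness is precisely what the hypothesis buys you.

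Steps~(2)--(4), the commutator/stabilizer computation, are therefore unnecessary, and in fact cannot be carried out here. If the commutator dimension count went through unconditionally, it would show that no non-classical group with unique generics of dimension $1$ is interpretable in a quasiminimal pregeometry structure --- which is exactly the open question the paper flags, and the reason the ``no non-classical group'' hypothesis is imposed at all. The first-order analogue of this argument (Hrushovski's) passes from generic behavior to global behavior using compactness, and this is precisely what is unavailable in the AEC setting. There is also a scope problem: your steps~(2)--(3) rely on the faithful generic action of $G$ on realizations of $q_1$, on Lemmas~\ref{ykkonen} and~\ref{tulolemma}, and on the germ formalism, all of which are features of the particular group constructed in Theorem~\ref{groupmain}. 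But Theorem~\ref{1abelian} is stated for an \emph{arbitrary} Galois definable group $G$ with unique generics of dimension~$1$, which need not come from a group configuration and need not carry a generic action on anything. So even if the independence bookkeeping could be closed, the commutator argument would apply only to the special $G$ of Section~3, not to the $G$ of the theorem.
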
  
   
\begin{proof} 
The bounded closure operator bcl gives a natural pregeometry on $G$ 
(remember that by Theorems 2.75 and 2.70 in \cite{lisuriart}, the pregeometry obtained from bcl extends to $\M^{eq}$ and that the independence calculus is obtained from it also there). 
Since $G$ has unique generic type, this pregeometry is $\omega$-homogeneous.
Thus, $G$ is Abelian by our assumptions. 
\end{proof}   
 
\begin{remark}\label{confremark}
It follows from Theorem \ref{1abelian} that if $\M$ does not interpret a non-classical group and $n=1$ in the group configuration (\ref{conf}), then the group obtained from the configuration by Theorem \ref{groupmain} is Abelian.
\end{remark} 
 
Next, we show that if $n=2$, then $G$ interprets an algebraically closed field.
To apply the method from \cite{HLS}, we need a total, $2$-determined and $2$-transitive action.
Since we only have a generic action, we have to make some modifications to get this.
But first, we take a look at some properties of the generic action.  
In particular, to eventually obtain an action that is as wanted, we will show that the generic action is generically $2$-transitive (Lemma \ref{gentrans}) and preserves the pregeometry (Lemma \ref{pregsail}).

By Theorem \ref{groupmain}, $G$ acts generically on the realizations of some stationary type.
Denote from now on by $D$ the set of realizations of this type.

\begin{lemma}\label{ag(a)its}
Let $g \in G$, $dim(g) \ge 1$, and let $a \in D$ be generic over $g$.
Then, $a \da g(a)$.
\end{lemma}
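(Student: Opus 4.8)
The plan is to show that $\dim(a/g(a)) = 1$, which since $\dim(a) = 1$ (as $a \in D$ and $D$ is the set of realizations of a fixed type of dimension $1$) forces $a \da g(a)$. First I would record the generic freeness built into the generic action: since $a$ is generic over $g$, the pair $(g,a)$ is ``in general position'', so $g \da a$, and moreover $g$ and $g(a)$ together determine quite a lot about $a$ — in fact $a = g^{-1}(g(a))$, so $a \in \textrm{dcl}(g, g(a))$, and symmetrically $g(a) \in \textrm{dcl}(g,a)$. The key dimension identity to exploit is the additivity of dimension along the tuple $(g, a)$: we have $\dim(g,a) = \dim(g) + \dim(a/g) = \dim(g) + 1$, using that $a$ is generic in the $1$-dimensional type $D$ and independent-generic from $g$.

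Next I would compute $\dim(g, a, g(a))$ two ways. On one hand, since $g(a) \in \textrm{dcl}(g,a)$, we get $\dim(g,a,g(a)) = \dim(g,a) = \dim(g) + 1$. On the other hand, grouping as $(g, g(a), a)$ and using $a \in \textrm{dcl}(g, g(a))$, we get $\dim(g, a, g(a)) = \dim(g, g(a))$. Combining, $\dim(g, g(a)) = \dim(g) + 1$, hence $\dim(g(a)/g) = 1$; in particular $g(a)$ is itself a generic realization of the (image) type over $g$ and is independent from $g$ in the appropriate sense. Then I would write $\dim(g,a,g(a)) = \dim(g(a)) + \dim(g / g(a)) + \dim(a / g, g(a))$, and since $a \in \textrm{dcl}(g,g(a))$ the last term is $0$; also $\dim(g(a)) \ge \dim(g(a)/g) = 1$ combined with $\dim(g(a)) \le \dim(g,a,g(a)) - \dim(\cdot)$ type bounds should pin down $\dim(g(a)) $ and then force $\dim(a/g(a)) = 1$. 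Concretely: $\dim(a, g(a)) \le \dim(g,a,g(a)) = \dim(g)+1$, while $\dim(a,g(a)) = \dim(g(a)) + \dim(a/g(a))$; separately transitivity gives $\dim(g)+1 = \dim(g,a,g(a)) = \dim(a,g(a)) + \dim(g/a,g(a))$, and since $g \in \textrm{bcl}(a, g(a))$ would be too strong, I instead use $\dim(g/a, g(a)) \ge \dim(g/a, g(a), \dots)$ — more carefully, $\dim(g / a, g(a)) = \dim(g/a) = \dim(g)$ because $g \da a$ and $g(a) \in \textrm{dcl}(g,a)$ gives no extra information about $g$ beyond $a$ only if $g(a) \da_a g$, which is exactly what needs checking; granting it, $\dim(a, g(a)) = 1$, so $\dim(a/g(a)) \le 1$, hence $= 1 = \dim(a)$ and $a \da g(a)$.

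I expect the main obstacle to be pinning down exactly which independences the phrase ``$G$ acts generically on the realizations of $q$'' guarantees — i.e. extracting from Theorem \ref{groupmain} (and its proof, where $g(a)$ is literally a germ evaluated at a generic realization) the statement that for $g$ with $\dim(g)\ge 1$ and $a$ generic over $g$, the element $g(a)$ is generic over $g$ and, crucially, that the ``reverse'' freeness $g \da_{g(a)} a$ or $\dim(g/a, g(a)) = \dim(g)$ holds. This is where the germ formalism pays off: $a$ realizes $q|_g$, so by the germ property $g(a) \da_g$ (nothing), $a \in \textrm{dcl}(g, g(a))$, $g(a) \in \textrm{dcl}(g,a)$, and symmetry of the configuration lets one swap the roles. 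Once those dictionary translations are in place, the argument is the purely formal dimension count above, using only additivity and symmetry of $\dim$ (equivalently of $\da$) as established in Section 2, so I would keep that part terse.
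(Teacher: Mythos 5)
There is a genuine gap, and in fact the pure dimension count you propose cannot succeed. The key independence you flag as ``what needs checking'' --- that $\dim(g/a,g(a))=\dim(g)$, i.e.\ $g(a)\da_a g$ --- is not merely unverified, it is false (it would contradict the lemma): since $g(a)\in\textrm{dcl}(g,a)$ we have $\dim(g(a)/ag)=0$, so $g(a)\da_a g$ would force $\dim(g(a)/a)=0$, i.e.\ $g(a)\in\textrm{bcl}(a)$, exactly the negation of what is to be proved. The correct value is $\dim(g/a,g(a))=\dim(g)-1$: the pair $(a,g(a))$ imposes a genuine one-dimensional constraint on $g$. Your final step is also off even granting the false identity: from $\dim(a,g(a))=1$ and $\dim(g(a))=1$ one gets $\dim(a/g(a))=0$, not $1$, again the opposite conclusion. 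More fundamentally, every one of the identities you write ($\dim(g,a)=\dim(g)+1$, $g(a)\in\textrm{dcl}(g,a)$, $a\in\textrm{dcl}(g,g(a))$, $\dim(g(a)/g)=1$) is satisfied by the degenerate ``action'' in which every $g$ acts as the identity, and there the lemma fails; so the conclusion cannot be extracted from additivity and symmetry of $\dim$ alone.

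The missing ingredient is the nondegeneracy of the generic action stated at the end of Theorem~\ref{groupmain}: if two germs agree at a generic realization, they are equal. The paper uses it via a pigeonhole argument: assume toward contradiction that $g(a)\in\textrm{bcl}(a)$ and that $g\notin\textrm{bcl}(a)$; take $\omega_1$ distinct realizations $g_i$ of $t(g/a)$, each independent from $a$ and each sending $a$ into the countable set $\textrm{bcl}(a)$; two of them must agree on $a$, forcing $g_i=g_j$, a contradiction. Hence $g\in\textrm{bcl}(a)$, which contradicts $a\da g$ and $\dim(g)\ge 1$. That boundedness-plus-faithfulness step is exactly what your approach omits, and it is indispensable.
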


\begin{proof}
Suppose $g(a) \in \textrm{bcl}(a)$.
We claim that $g \in \textrm{bcl}(a)$, which will yield a contradiction.
Suppose not.
Let $(g_i)_{i<\o_1}$ be distinct realizations of $t(g/a)$.
Then, for each $i$, $g_i \da a$ and $g_i(a) \in \textrm{bcl}(a)$.
By the pigeonhole principle, there must be some $i<j<\o_1$ such that $g_i(a)=g_j(a)$.
By Theorem \ref{groupmain}, $g_i=g_j$, a contradiction.
\end{proof}

\begin{lemma}\label{U4}
Let $g \in G$ be generic, $a, b \in D$ such that $dim(a,b/g)=2$.
Then,  $dim(a,b,g(a),g(b))=4$.
\end{lemma}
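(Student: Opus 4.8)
The plan is to deduce the statement from the previous lemma together with the fact that $G$ acts by germs of definable functions, so that $g(a)$ and $g(b)$ are each in the definable closure of $g$ together with $a$ (resp. $b$), combined with the $2$-determinacy consequence of Theorem \ref{groupmain}. First I would note that since $a,b$ are generic over $g$ (as $dim(a,b/g)=2$ forces each of $a$ and $b$ to be generic over $g$, the type being $1$-dimensional), by Lemma \ref{ag(a)its} we have $a \da g(a)$ and $b \da g(b)$; more importantly, applying the action, $g(a)$ realizes $q\vert_g$ so it is generic over $g$ as well, and likewise $g(b)$. The key structural input is that $g(a) \in \textrm{bcl}(a,g)$ and $g(b) \in \textrm{bcl}(b,g)$, and symmetrically $a \in \textrm{bcl}(g(a),g)$, $b \in \textrm{bcl}(g(b),g)$, since $g$ is (a germ of) an invertible definable function.

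The main computation is then a dimension count. We have $dim(a,b/g)=2$, hence $dim(a,b,g)= 2 + dim(g) = 2 + 2 = 4$. Since $g(a) \in \textrm{bcl}(a,g)$ and $g(b) \in \textrm{bcl}(b,g)$, adding $g(a),g(b)$ does not increase dimension: $dim(a,b,g(a),g(b),g)=4$. Now I would compute $dim(a,b,g(a),g(b))$ from below by showing $g \in \textrm{bcl}(a,b,g(a),g(b))$, which gives $dim(a,b,g(a),g(b)) = dim(a,b,g(a),g(b),g) = 4$, and the upper bound $\le 4$ is immediate. So everything reduces to: \emph{$g$ is bounded over $a,b,g(a),g(b)$.}

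To see $g \in \textrm{bcl}(a,b,g(a),g(b))$, suppose not, and let $(g_i)_{i<\o_1}$ enumerate distinct realizations of $t^g(g/a,b,g(a),g(b))$; then each $g_i$ satisfies $g_i(a)=g(a)$ and $g_i(b)=g(b)$. The element $g$ is generic, i.e. $u=g(a)$ realizes $q\vert_g$ — but here I want to be careful: to invoke the $2$-determinacy from the last sentence of Theorem \ref{groupmain} I need $a$ (and $b$) to be sufficiently generic over the pair of group elements involved. Each $g_i$ has $dim(g_i/a,b,g(a),g(b)) = dim(g/a,b,g(a),g(b))$, and since $g$ itself is generic over $a,b$ one checks $a$ (resp.\ $b$) is generic over $g_i,g$ for a club of $i$; then applying the uniqueness clause of Theorem \ref{groupmain} (``if $\sigma(u)=\tau(u)$ for $u$ realizing $q\vert_{\sigma,\tau}$ then $\sigma=\tau$'') to $g_i$ and $g$ at the point $a$ yields $g_i=g$, a contradiction. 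I expect the delicate point — the main obstacle — to be exactly this genericity bookkeeping: verifying that $a$ (or $b$) really is a generic realization of $q$ over the relevant finite sets so that the $2$-determinacy clause applies, rather than just over $g$ alone. This should follow from $dim(a,b/g)=2$, $dim(g)=2$, and the inequalities above forcing $dim(a/g, g(a), g(b), b)=1$, hence $a$ generic over $g,g_i$ for cofinally many $i$, but it needs to be spelled out carefully.
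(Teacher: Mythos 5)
Your overall strategy is genuinely different from the paper's, and I think it is salvageable, but as written it contains a concrete error at exactly the step you flag as "delicate."

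The paper proceeds by contradiction from $\dim(a,b,g(a),g(b))\le 3$, first deducing $g(b)\in\textrm{bcl}(a,b,g(a))$, then taking $\omega_1$-many realizations $g_i$ of $t(g/a,g(a))$ chosen \emph{independent over $b$ and the previous $g_j$}; each $g_i(b)$ lands in the countable set $\textrm{bcl}(a,b,g(a))$, so pigeonhole gives $g_j(b)=g_k(b)$, and the Morley choice makes $b$ automatically generic over $g_j,g_k$, so the last clause of Theorem \ref{groupmain} applies. You instead take realizations of the full type $t^g(g/a,b,g(a),g(b))$, for which the collision $g_i(a)=g(a)$, $g_i(b)=g(b)$ is automatic (no pigeonhole needed), and try to apply the uniqueness clause directly. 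This is a cleaner decomposition in principle, but it shifts all the work onto showing that $a$ or $b$ really is generic over the pair, and that is exactly where your write-up goes wrong: you assert that the inequalities force $\dim(a/g,g(a),g(b),b)=1$, but in fact $\dim(a/g,g(a))=0$ already, since $g$ is an \emph{invertible} germ and hence $a\in\textrm{dcl}(g,g(a))$; so $\dim(a/g,g(a),g(b),b)=0$, and the stated reason for genericity is a non sequitur.

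The gap is fillable, but by a different count. Since $a,b\in\textrm{bcl}(g_1,g(a),g(b))$ for any $g_1$ with $g_1(a)=g(a),g_1(b)=g(b)$, one has for two independent such realizations $g_1,g_2$ (independent over $a,b,g(a),g(b)$, using that $g\notin\textrm{bcl}(a,b,g(a),g(b))$ and so $\dim(g/a,b,g(a),g(b))=1$, the case of dimension $2$ being impossible because $\dim(a,b,g(a))=3$):
\[
\dim(g(a),g(b),g_1,g_2)=\dim(a,b,g(a),g(b),g_1,g_2)=3+2=5,
\]
while also $\dim(g(a),g(b),g_1,g_2)=\dim(g_1,g_2)+\dim(g(a),g(b)/g_1,g_2)\le 4+\dim(g(a),g(b)/g_1,g_2)$, forcing $\dim(g(a),g(b)/g_1,g_2)\ge 1$. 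Hence at least one of $g(a),g(b)$ — equivalently one of $a,b$ — is generic over $\{g_1,g_2\}$, and the uniqueness clause of Theorem \ref{groupmain} applied at that point gives $g_1=g_2$, a contradiction. (Incidentally, with this correction you do not need $\omega_1$-many $g_i$ nor any "club" argument: two independent realizations suffice.) So: right basic idea, genuinely different route from the paper (collision for free instead of via pigeonhole), but the genericity justification you propose is false as stated and needs to be replaced by the dimension count above.
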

 
\begin{proof}
Suppose towards a contradiction that $dim(a,b,g(a), g(b)) \le 3$.
We have $b \da a, g, g(a)$ and by Lemma \ref{ag(a)its},  $a \da g(a)$, so we must have
$g(b) \in \textrm{bcl}(a,b,g(a))$.
We will prove that  $g \in \textrm{bcl}(a, g(a))$, which is a contradiction, since $dim(g)=2$ and $g \da a$.

Suppose not.
Then, there are distinct $(g_i)_{i<\o_1}$ such that $t(g_i/a,g(a))=t(g/a,g(a))$  and $g_i \da_{a,g(a)} b(g_j)_{j<i}$.
Now,  $g_i(b) \in \textrm{bcl}(a,b,g(a))$.
By the pigeonhole principle, there are $j<k<\o_1$ such that $g_k(b)=g_j(b)$.
An easy calculation shows that $b \da g_j, g_k$, and thus 
$g_j=g_k$ by Theorem \ref{groupmain}, a contradiction.
\end{proof}

\begin{lemma}\label{gentrans}
Let $a,b,c,d \in D$ be such that $dim(a,b,c,d)=4$.
Then, there is some $g \in G$ such that $g(a,b)=(c,d)$.
\end{lemma}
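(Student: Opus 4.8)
The plan is to reduce the existence of such a $g$ to the generic $2$-transitivity packaged in Lemma \ref{U4}, using a standard "fix the target by a generic element, then correct by a second generic element" argument. Concretely, given $a,b,c,d \in D$ with $dim(a,b,c,d)=4$, I would first pick a generic $g_1 \in G$ over $abcd$ (so $dim(g_1)=2$ and $g_1 \da abcd$). Then $dim(a,b/g_1)=2$, so by Lemma \ref{U4} we get $dim(a,b,g_1(a),g_1(b))=4$; moreover $g_1(a),g_1(b) \da abcd$ can be arranged since $g_1$ was chosen generic over everything. This means that $(g_1(a),g_1(b))$ is a generic pair "on the other side", independent from $(c,d)$, so we have room to apply the same reasoning to move $(g_1(a),g_1(b))$ to $(c,d)$.

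\textbf{Main steps.} First, choose $g_1 \in G$ generic over $abcd$ and set $(a_1,b_1)=(g_1(a),g_1(b))$; by Lemma \ref{U4} and the independence of $g_1$ from $abcd$, we get $dim(a_1,b_1,c,d)=4$ and in fact $dim(a,b,a_1,b_1,c,d)$ is as large as the configuration allows. Second, choose $g_2 \in G$ generic over $a_1 b_1 c d$ (and over $ab$ as well); applying Lemma \ref{U4} to $g_2$ and the pair $(a_1,b_1)$ gives $dim(a_1,b_1,g_2(a_1),g_2(b_1))=4$. Third — and this is where the real content lies — I need to choose $g_2$ so that $g_2(a_1,b_1)=(c,d)$ exactly, not merely generically. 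This is where I expect to invoke a transitivity statement: since $(a_1,b_1)$ and $(c,d)$ are both generic pairs (each has dimension $2$) and they are mutually independent, the type of $(a_1,b_1)$ over $\emptyset$ equals that of $(c,d)$, and the generic action of $G$ should be rich enough that some $g_2$ realizes $g_2(a_1,b_1)=(c,d)$; one extracts this by taking an automorphism moving $(a_1,b_1)$ to $(c,d)$, transporting a generic $g$ along it, and checking the action axioms from Theorem \ref{groupmain}. Finally, set $g=g_2.g_1 \in G$; then $g(a,b)=g_2(g_1(a),g_1(b))=g_2(a_1,b_1)=(c,d)$, using that $a,b$ are generic enough over $g_1,g_2$ for the compositions to be defined (this requires a short independence bookkeeping check, guaranteed by having chosen $g_1,g_2$ generic over $abcd$).

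\textbf{The hard part} will be step three: promoting the generic $2$-transitivity of Lemma \ref{U4} (which only controls \emph{dimensions}) to an actual transitivity statement that produces a single $g_2$ with $g_2(a_1,b_1)$ equal to the prescribed pair $(c,d)$. The delicate point is that the action of $G$ on $D$ is only generic, so I cannot freely evaluate $g_2$ at $(a_1,b_1)$ unless $(a_1,b_1)$ realizes the appropriate free extension of $q \otimes q$ over $g_2$; ensuring this while simultaneously forcing the \emph{value} to be $(c,d)$ requires choosing $g_2$ and the pair in the right order and exploiting that $t(a_1,b_1/\emptyset)=t(c,d/\emptyset)=q\otimes q$ together with stationarity, so that an automorphism fixing $\emptyset$ and sending one to the other conjugates a generic $g$ into the desired $g_2$. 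I would also need to double-check, via the uniqueness clause of Theorem \ref{groupmain}, that $g_2$ is genuinely an element of $G$ acting as stated, and that $g=g_2.g_1$ lands in $G$ and is defined at $(a,b)$; these are routine but must be spelled out to keep the generic-action formalism honest.
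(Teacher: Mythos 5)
Your plan contains the right ingredients (Lemma \ref{U4} plus an automorphism transporting a generic $g'$), but the two-step decomposition through $g_1$ and $g_2$ is circular: the step you label ``the hard part'' --- finding a \emph{single} $g_2$ with $g_2(a_1,b_1)=(c,d)$ for two independent $2$-dimensional pairs --- is exactly an instance of the lemma you are trying to prove, and the preliminary move by $g_1$ gives you no new leverage on it (if you could solve step 3, the same argument would solve the original problem directly, since $dim(a,b,c,d)=4$ is already the maximal hypothesis). You have thus reduced the lemma to itself and left the actual content as a ``to be spelled out.'' In addition, composing $g=g_2.g_1$ and evaluating it at $(a,b)$ requires extra bookkeeping about where the generic composition is defined, which the paper's route avoids entirely.

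The paper's proof is a one-shot version of the automorphism-transport idea you gesture at. Pick a generic $g' \in G$. By Lemma \ref{U4} there exist $a',b' \in D$ with $dim(a',b',g'(a'),g'(b'))=4$. Since $(a',b',g'(a'),g'(b'))$ and $(a,b,c,d)$ are both $4$-tuples from $D$ of dimension $4$, uniqueness of the generic type (QM4 / homogeneity over closed sets, via the stationary type whose realizations form $D$) gives an automorphism $\sigma \in \textrm{Aut}(\M)$ with $\sigma(a',b',g'(a'),g'(b'))=(a,b,c,d)$. Then $g:=\sigma(g')$ is generic, $\sigma$ conjugates the generic action, and $g(a,b)=(c,d)$. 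So the missing move in your write-up is to apply the automorphism to the \emph{whole} configuration $(a',b',g'(a'),g'(b'),g')$ in one step rather than first pushing $(a,b)$ off to a new pair; once you see that, the $g_1$ scaffolding becomes unnecessary.
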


\begin{proof}
Let $g' \in G$ be generic.
By Lemma \ref{U4}, there are $a',b' \in D$ such that $dim(a',b',g'(a'), g'(b'))=4$.
Now, there is some $\sigma \in \textrm{Aut}(\M)$ such that \mbox{$\sigma(a',b',g'(a'),g'(b'))=(a,b,c,d)$,} and we may choose $g=\sigma(g')$.
\end{proof}

\begin{lemma}\label{koodimelk}
Suppose $g \in G$, $a,b \in D$, and the set $\{g,a,b\}$ is independent.
Then, $g \in \textrm{bcl}(a,b,g(a), g(b))$.
\end{lemma}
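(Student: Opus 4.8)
The plan is to reduce the claim to a dimension count and then feed off Lemma~\ref{U4}. Since $\{g,a,b\}$ is independent, $a \da g$ and $b \da g$; as $a$ and $b$ lie in $D$, i.e.\ realize the stationary type on which $G$ acts generically, each of them is a generic realization of that type over $g$. Hence $g$ is defined on $a$ and on $b$, with $g(a) \in \textrm{dcl}(a,g)$ and $g(b) \in \textrm{dcl}(b,g)$, so
$$\textrm{bcl}(a,b,g(a),g(b)) \subseteq \textrm{bcl}(a,b,g).$$
Since $\{g,a,b\}$ is independent we also have $dim(a,b,g) = dim(a,b) + dim(g) = 2 + dim(g)$. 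Because $g(a),g(b) \in \textrm{bcl}(a,b,g)$, the desired conclusion $g \in \textrm{bcl}(a,b,g(a),g(b))$ is equivalent to $dim(a,b,g(a),g(b)) = 2 + dim(g)$, and the displayed inclusion already yields $\le$; so it remains only to prove the reverse inequality.

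The main case is $dim(g) = 2$, i.e.\ $g$ is a generic element of $G$ (recall every element of $G$ has dimension at most $n=2$). Then $dim(a,b/g) = dim(a,b) = 2$, as $\{g,a,b\}$ is independent, so Lemma~\ref{U4} applies and gives $dim(a,b,g(a),g(b)) = 4 = 2 + dim(g)$, as needed.

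When $dim(g) \le 1$ the statement is softer; in fact one gets the stronger $g \in \textrm{bcl}(a,g(a))$. If $dim(g) = 0$ then $g \in \textrm{bcl}(\emptyset)$ and there is nothing to prove. If $dim(g) = 1$, then $a$ is generic over $g$ and $dim(g) \ge 1$, so Lemma~\ref{ag(a)its} yields $a \da g(a)$, whence $dim(a,g(a)) = 2 = 1 + dim(g) = dim(a,g)$; since $g(a) \in \textrm{bcl}(a,g)$ this forces $dim(g/a,g(a)) = 0$, i.e.\ $g \in \textrm{bcl}(a,g(a)) \subseteq \textrm{bcl}(a,b,g(a),g(b))$.

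I expect the only real subtlety to be recognizing that one must branch on $dim(g)$, and choosing the dimension-count route over a direct pigeonhole imitation of Lemmas~\ref{ag(a)its} and~\ref{U4}. In such an imitation one would take distinct conjugates $g_i$ of $g$ over $\{a,b,g(a),g(b)\}$, observe that each fixes $a$ and $b$, and try to force $g_i = g_j$ via Theorem~\ref{groupmain}; the obstruction is that the $g_i$ are naturally independent only over $\{a,b,g(a),g(b)\}$, and this base cannot be moved down to make $a$ (or $b$) a generic realization over the pair $\{g_i,g_j\}$, which is exactly what Theorem~\ref{groupmain} needs. The argument above sidesteps this entirely; apart from it, only routine bookkeeping with the independence calculus is required.
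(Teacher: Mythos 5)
Your proof is correct and takes essentially the same route as the paper: the paper's own proof is a three-line case split on $\dim(g)$ that invokes Lemma~\ref{ag(a)its} for $\dim(g)=1$ and Lemma~\ref{U4} for $\dim(g)=2$, which is exactly your argument with the dimension counts made explicit. Your closing discussion of why a direct pigeonhole imitation would stall is a reasonable observation but is not needed, since the paper does not attempt that route either.
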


\begin{proof} 
If $dim(g)=0$, this is clear.
If $dim(g)=1$, then it follows from the assumptions and Lemma \ref{ag(a)its} that $g \in \textrm{bcl}(a,g(a))$.
If $dim(g)=2$, the result follows from the assumptions and Lemma \ref{U4}.  
\end{proof}

\begin{lemma}\label{pregsail}
Let $A \subseteq D$, $g \in G$, $g \da a$ for each $a \in A$, and $b \in D$.
Then, $b \in \textrm{bcl}(A)$ if and only if $g(b) \in \textrm{bcl}(\{g(a) \, | \, a \in A\})$.
\end{lemma}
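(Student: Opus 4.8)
The plan is to reduce the statement to the standard fact that an $\textrm{Aut}$-equivariant bijection preserves bounded closure, and then to peel off the parameter $g$ using the sharp transitivity of the generic action established in Theorem~\ref{groupmain}.

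First I would dispose of the easy reductions. Since $\textrm{bcl}$ has finite character, $b\in\textrm{bcl}(A)$ iff $b\in\textrm{bcl}(A_0)$ for some finite $A_0\subseteq A$, and likewise $g(b)\in\textrm{bcl}(\{g(a):a\in A\})$ iff $g(b)\in\textrm{bcl}(\{g(a):a\in A_0\})$ for some finite $A_0$; so it suffices to treat finite $A$. Next, it is enough to prove the implication from left to right: applying it to $g^{-1}$, to $\{g(a):a\in A\}$ and to $g(b)$ yields the converse, because $g^{-1}(g(a))=a$, $g^{-1}(g(b))=b$, and $g^{-1}\da g(a)$ for each $a\in A$ (indeed $g(a)$ realizes $q\vert_g$, so $g(a)\da g$, hence $g\da g(a)$ by symmetry, hence $g^{-1}\da g(a)$ as $g^{-1}$ is interdefinable with $g$).

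So assume $A$ is finite and $b\in\textrm{bcl}(A)$ (note $b\da g$, so that $g(b)$ is defined). Since $g(a)\in\textrm{dcl}(g,a)$ and $a\in\textrm{dcl}(g,g(a))$ for each $a\in A$, we have $\textrm{bcl}(g,\{g(a):a\in A\})=\textrm{bcl}(g,A)$, and as $b\in\textrm{bcl}(A)$ this gives
$$g(b)\in\textrm{bcl}(g,b)\subseteq\textrm{bcl}(g,A)=\textrm{bcl}(g,\{g(a):a\in A\}).$$
Thus everything comes down to removing the parameter $g$, i.e.\ to upgrading $g(b)\in\textrm{bcl}(g,\{g(a):a\in A\})$ to $g(b)\in\textrm{bcl}(\{g(a):a\in A\})$; a routine dimension count shows this is equivalent to $g\da_{\{g(a):a\in A\}}g(b)$.

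I expect this last step to be the crux, and the plan is to prove it by contradiction using a counting argument powered by the sharp transitivity of Theorem~\ref{groupmain}. If $g(b)\notin\textrm{bcl}(\{g(a):a\in A\})$, choose many pairwise distinct realizations $v_i$ of $t^g(g(b)/\{g(a):a\in A\})$, independent over $\{g(a):a\in A\}$, and automorphisms $\psi_i$ fixing $\{g(a):a\in A\}$ pointwise with $\psi_i(g(b))=v_i$; writing $g_i=\psi_i(g)$, $a^i=\psi_i(a)$, $b^i=\psi_i(b)$, one gets $g_i(a^i)=g(a)$ for each $a\in A$, $g_i(b^i)=v_i$, $b^i\in\textrm{bcl}(\{a^i:a\in A\})$, $g_i\da a^i$ for each $a$, and $v_i\in\textrm{bcl}(g_i,\{g(a):a\in A\})$. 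By the sharp transitivity of the generic action, $g_i$ is the unique element of $G$ carrying the generic point $a^i$ to $g(a)$, so $g_i\in\textrm{dcl}(a^i,g(a))$; and since $\textrm{bcl}(g_i,\{g(a):a\in A\})$ is countable while the $v_i$ are distinct and independent over $\{g(a):a\in A\}$, the values $g_i$ must be essentially distinct. The aim is then to contradict this by using $b^i\in\textrm{bcl}(\{a^i:a\in A\})$ together with $g_i(a^i)=g(a)$ and the injectivity of the action to force two of the $g_i$, hence two of the $v_i=g_i(b^i)$, to coincide. It is precisely here that the hypothesis $g\da a$ for each $a\in A$ enters, and making this counting argument go through cleanly is the part I expect to require the most care.
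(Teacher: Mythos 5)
Your reductions are fine: restricting to finite $A$, reducing to one direction via $g^{-1}$, observing $g(b)\in\textrm{bcl}(g,\{g(a):a\in A\})$, and reformulating the goal as $g\da_{\{g(a):a\in A\}}g(b)$ are all correct and match the shape of the problem. But the crux step you flag as needing care is a genuine gap, and as sketched I do not think the counting argument can close. The problem is that you derive (correctly) that the $g_i=\psi_i(g)$ are essentially distinct, but you offer no mechanism that would constrain them: the tuples $a^i=\psi_i(a)$ vary freely with $i$, so the sharp-transitivity relation $g_i\in\textrm{dcl}(a^i,g(a))$ places $g_i$ in a moving target rather than in a fixed bounded set, and ``injectivity of the action'' is an equality criterion, not a source of collisions. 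Having distinct $g_i$ simply is not a contradiction, and nothing in the configuration forces two of them to coincide. (There is also a secondary worry: to get $g_i\in\textrm{dcl}(a^i,g(a))$ from Theorem~\ref{groupmain} you would need $a^i$ to realize $q\vert_{g_i,f(g_i)}$ for an arbitrary automorphism $f$ fixing $a^i,g(a)$, and you only know $a^i\da g_i$ and $a^i\da f(g_i)$ separately.)

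The paper closes the gap with a generic-translation trick rather than a counting argument. It first proves the easy case: if $g\da X$ for a finite $X\subseteq D$ (joint independence), then $\dim(g(X))=\dim(X)$, because $X$ and $g(X)$ are interdefinable over $g$ and $g$ is independent from each. For the general case, where only $g\da x$ holds for each single $x\in X$, one takes $g'$ generic over $\{g\}\cup X$. Then $g'.g\da X$, so by the easy case $\dim((g'.g)(X))=\dim(X)$; and $g'\da g(X)$, so again by the easy case $\dim(g'(g(X)))=\dim(g(X))$. Since $(g'.g)(x)=g'(g(x))$, these two computations force $\dim(g(X))=\dim(X)$. Applying this with $X=A$ and $X=A\cup\{b\}$ gives the lemma. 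This is the standard ``translate by a generic to force independence'' device, and it is exactly the ingredient missing from your sketch; I would recommend replacing the pigeonhole plan with it.
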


\begin{proof}
It  suffices to show this in case $A$ is finite.
Suppose $A=\{a_1, \ldots, a_n, a_{n+1}, \ldots, a_{m}\}$ and $dim(A)=dim(a_1, \ldots, a_n)=n$.
We will show that $dim(g(a_1), \ldots, g(a_n))=n$.
Since $g^{-1} \da a$ for each $a \in A$, the same argument can then be applied to show that 
$dim(g(A))=n$ implies $dim(A)=n$ 
  
If $g \da a_1, \ldots, a_n, a_{n+1}, \ldots, a_m$, then $dim(g(a_1), \ldots, g(a_m))=n$ since $a_i$ and $g(a_i)$ are interdefinable over $g$.
Suppose now $dim(a_1, \ldots, a_m)=n$, and $g \da a_i$ for each $i$.
Choose $g' \in G$ generic so that $g' \da g, a_1, \ldots, a_m$.
Then, $g'.g \da a_1, \ldots, a_m$, and thus $dim(g'.g(a_1), \ldots, g'.g(a_m))=n$.
Suppose for the sake of contradiction that $dim(g(a_1), \ldots, g(a_m))=k \neq n$.
But now, $dim(g'.g(a_1), \ldots, g'.g(a_m))=dim(g(a_1), \ldots, g(a_m))=k \neq n$, a contradiction.
 \end{proof}
 
Applying lemmas \ref{gentrans}, \ref{koodimelk} and \ref{pregsail}, we can now conclude the following theorem using the arguments of sections 3 and 4 in \cite{locmod}.
  
\begin{theorem}\label{field1}
Let $\M$ be a quasiminimal pregeometry structure, and suppose there is a group configuration in $\M$ with $n=2$.
Then, there is either an algebraically closed field or a non-classical group in $\M^{eq}$.
\end{theorem}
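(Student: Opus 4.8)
The plan is to assemble the ingredients that have already been established and then invoke the classical mechanism from \cite{HLS} (in the form streamlined by \cite{locmod}) that extracts an algebraically closed field from a group acting $2$-transitively and $2$-determinedly on a minimal set. Concretely, starting from a group configuration in $\M$ with $n=2$ and passing (if necessary) to a boundedly equivalent configuration and to a finite parameter set, Theorem \ref{groupmain} produces a Galois definable group $G$ in $\M^{eq}$ with unique generics whose generic elements have dimension $2$, together with a stationary type on whose realization set $D$ the group $G$ acts generically; we may assume $G$ is $\emptyset$-definable. The work of Section 4 then converts this generic action into an honest action: first Lemmas \ref{ag(a)its}, \ref{U4}, \ref{gentrans}, \ref{koodimelk} and \ref{pregsail} record the needed genericity properties (generic $2$-transitivity, coding of $g$ by its action on two generic points, and preservation of the $bcl$-pregeometry); then the construction of $P'=\mathcal D/\!\sim$ (Lemma \ref{identify}) yields a total action extending the one on $D$, and the further quotients $P=P'/\!\approx$ and $G'=G/\!\approx$ (Lemmas \ref{approx}, \ref{bclg}, \ref{2-transit}, \ref{hereduniq}, \ref{2det}) give a group $G'$ acting on the minimal set $P$ that is both $2$-transitive and $2$-determined.

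With $G'$ acting $2$-transitively and $2$-determinedly on the minimal set $P$, I would then run the field-recovery argument of \cite{HLS}, which is precisely the content of 4.1--4.4 in \cite{locmod}. The outline there is: fix three distinct generic points and consider the stabilizer subgroups; a $2$-point stabilizer $H$ (obtained via Lemma \ref{hereduniq} as a connected group with $1$-dimensional unique generics) together with the $1$-point stabilizer gives a split extension, and analyzing how the $1$-dimensional group $H$ acts on the "affine line" $P\setminus\{pt\}$ — using $2$-transitivity to see the action is sharply transitive on an appropriate generic set, and $2$-determinedness to control definability — one produces two commuting definable regular actions whose interaction defines addition and multiplication. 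The distributive law and associativity come out of the group laws in $G'$, and one checks the resulting structure is a (commutative) division ring; since everything lives in the $\omega$-stable-like, quasiminimal $\M^{eq}$ and the field has a minimal underlying set, it is algebraically closed. All of this is a direct transcription of the cited arguments, so in the write-up I would simply state that the proof is as in \cite{HLS} (equivalently 4.1--4.4 of \cite{locmod}), invoking Lemmas \ref{2-transit}, \ref{2det} and \ref{hereduniq} at the points where a $2$-transitive, $2$-determined action and a hereditarily-unique-generic stabilizer are required.

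The only genuinely new point to be careful about, and hence the place I expect the main (if modest) obstacle, is bookkeeping: making sure that the passage from the configuration to $G$, then to $G'$ acting on $P$, does not quietly change the ambient structure in a way that breaks the hypotheses of the \cite{HLS}/\cite{locmod} field argument — in particular that $P$ really is a minimal set with a stationary generic type, that $G'$ still has unique generics of the right dimension, and that the finite parameter set $A'$ (absorbed into $\emptyset$) causes no trouble with the no-non-classical-group hypothesis, which is used only through Theorem \ref{1abelian} at the level of dimension-$1$ stabilizers. Once these are confirmed — and they are, by the lemmas of Section 4 — the theorem follows immediately, since the cited construction is purely group-theoretic and uses no feature of the first-order or strongly minimal setting beyond what Section 2 has already provided for quasiminimal classes.

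\begin{proof}
As in the discussion preceding the statement, a group configuration with $n=2$ yields, by Theorem \ref{groupmain}, a group $G$ in $\M^{eq}$, Galois definable over a finite set which we take to be $\emptyset$, with unique generics, generic elements of dimension $2$, and a generic action on the realizations $D$ of a stationary type. Lemmas \ref{ag(a)its}--\ref{pregsail} establish the genericity properties of this action, and the constructions of $P'$, $P$ and $G'$ (Lemmas \ref{identify}--\ref{2det}) produce a group $G'$ acting on the minimal set $P$ totally, $2$-transitively (Lemma \ref{2-transit}) and $2$-determinedly (Lemma \ref{2det}), with $G'$ having unique generics and, for generic $x \in P$, a Galois definable connected subgroup $H$ of the stabilizer of $x$ with $1$-dimensional unique generics (Lemma \ref{hereduniq}). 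Given such an action, the argument of \cite{HLS} — presented in the present setting as 4.1--4.4 of \cite{locmod} — produces an algebraically closed field interpretable in $\M^{eq}$: one analyzes the stabilizer subgroups of three distinct generic points of $P$, uses $2$-transitivity and $2$-determinedness to recognize sharply transitive definable actions on the complement of a point, and reads off addition and multiplication, the field axioms following from the group laws in $G'$ and the minimality of $P$ forcing algebraic closure. Applying this verbatim, using Lemmas \ref{2-transit}, \ref{2det} and \ref{hereduniq} at the points where a $2$-transitive, $2$-determined action and a hereditarily-unique-generic point stabilizer are needed, gives an algebraically closed field in $\M^{eq}$.
\end{proof}
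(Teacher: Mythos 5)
Your proposal follows exactly the paper's route: Theorem \ref{groupmain} produces the Galois definable group with a generic action, Lemmas \ref{ag(a)its}--\ref{2det} upgrade it to a total, $2$-transitive, $2$-determined action of $G'$ on $P$, and the field is then extracted by citing the argument from \cite{HLS} as presented in 4.1--4.4 of \cite{locmod}. The only difference is that you sketch the internals of the cited field-recovery mechanism, whereas the paper simply points to the references; mathematically the two are the same.
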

 
From now on, a group configuration with $n=2$ will be called a \emph{field configuration}.
 
\subsection{Obtaining a field from an indiscernible array}

Our goal is to find a field in a non-locally modular Zariski-like structure that does not interpret non-classical groups.
We will use Theorem \ref{field1}, but for our purposes it is practical to reformulate it in terms of indiscernible arrays, as is done in the first-order context in \cite{HrZi}.
 
\begin{definition}
We say that $f=(f_{ij} : i \in I, j \in J)$, where $I$ and $J$ are ordered sets, is an \emph{indiscernible array} over $A$ if whenever $i_1, \ldots, i_n \in I$, $j_1, \ldots, j_m \in J$, $i_1 < \ldots < i_n$, $j_1 <\ldots < j_m$, then $t((f_{i_{\nu} j_{\mu}}: 1 \le \nu \le n, 1 \le \mu, \le m)/A)$ depends only on the numbers $n$ and $m$.

If at least the dimension  of the above sequence depends only on $m,n$, and $dim((f_{i_{\nu} j_{\mu}}: 1 \le \nu \le n, 1 \le \mu \le m)/A)= \alpha(m,n)$, where $\alpha$ is some polynomial of $m$ and $n$, we say that $f$ is \emph{rank-indiscernible}  over $A$, of type $\alpha$, and write $dim(f;n,m/A)=\alpha(n,m)$.
\end{definition}  
 
If $(c_{ij}: i \in I, j \in J)$ is an array and $I' \subseteq I$, $J' \subseteq J$ , we write $c_{I'J'}$ for $(c_{ij}: i \in I', j \in J'$).
If $|I'|=m$ and $|J'|=n$, we call $c_{I'J'}$ an $m \times n$ -\emph{rectangle} from $c_{ij}$.

The following is Lemma 4.15 from \cite{lisuriart} (or \cite{lisuri}).
 
\begin{lemma}\label{samelascar}
Let $f=(f_{ij} : i, j \in \kappa)$ be an indiscernible array over $A$, and let $\kappa \ge \o_1$.
Then, for all $m,n$, all the $m \times n$ rectangles of $f$ have the same Lascar type over $A$. 
\end{lemma}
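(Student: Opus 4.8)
The plan is to use the indiscernibility of the array together with the homogeneity results relating quantifier-free/Galois types to Lascar types in this setting. First I would fix $m,n$ and show that any two $m \times n$ rectangles $c_{I_1 J_1}$ and $c_{I_2 J_2}$, with $|I_k|=m$ and $|J_k|=n$, have the same weak type over $A$. This is immediate from the definition of an indiscernible array: the weak type of an $m \times n$ rectangle depends only on $m$ and $n$, since for any finite $A_0 \subseteq A$ the Galois type $t^g(c_{IJ}/A_0)$ depends only on $m$ and $n$. So the real content is the upgrade from ``same weak type'' to ``same Lascar type''.

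The key tool for this upgrade is the abundance of mutually indiscernible rows and columns that an array of length $\kappa \ge \o_1$ provides, combined with Lemma \ref{si}. More precisely, I would argue as follows. Enlarge the rectangle: given an $m \times n$ rectangle, look at the columns $(c_{I j})_{j \in \kappa}$ of the sub-array indexed by the chosen $m$ rows $I$. Since $\kappa$ is uncountable (and we may pass to a regular cofinal subset via Lemma \ref{morleycofinal}), infinitely many of these column-tuples form a sequence that is independent over $A$ and has constant Lascar type over $A$ — hence, by Lemma \ref{si}, strongly indiscernible over $A$. The point of strong indiscernibility is that it lets us realize any rearrangement by an automorphism, and in particular allows us to move one block of $n$ columns onto another. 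Doing the same with rows then lets us move any $m \times n$ rectangle to any other by a composition of automorphisms fixing $A$, which gives equality of Galois types over $A$, hence equality of Lascar types over $A$ (Galois type over $A$ determines Lascar type over $A$, since a Lascar-invariant equivalence relation is in particular $Aut(\M/A_0)$-invariant for finite $A_0 \subseteq A$).

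I would organize the argument by first establishing the independence: any finite set of distinct rows (or columns) of the array is independent over $A$. This follows from rank-indiscernibility, which is built into the definition of an indiscernible array, because $dim(c_{I'J'}/A)$ for an $m' \times n'$ rectangle is a fixed function of $m',n'$, and additivity of dimension forces the rows to be independent — if some row were dependent on the others, the dimension count for the $(m'+1) \times n'$ rectangle would fail. Next, using (QM4)/(QM5) in the Galois-type form (Remark \ref{le31}), constant weak type plus independence upgrades to constant Lascar type along a suitable sub-sequence, and then Lemma \ref{si} delivers strong indiscernibility. Finally I would assemble these pieces: strong indiscernibility of the rows lets one transport the set of rows $I_1$ to $I_2$, strong indiscernibility of the columns (in the resulting array) transports $J_1$ to $J_2$, and the composite automorphism witnesses $Lt(c_{I_1 J_1}/A) = Lt(c_{I_2 J_2}/A)$.

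The main obstacle I anticipate is bookkeeping the two directions simultaneously: after moving rows one must check that the relevant columns of the moved array are still independent and of constant type over $A$ so that the second application of Lemma \ref{si} is legitimate, and one must be careful that the automorphism moving the rows can be taken to fix enough of the array (or that we only need it to fix $A$) for the column argument to go through. A clean way around this is to not move rows and columns separately but to note that the full $\kappa \times \kappa$ array, restricted to any finite rectangle, already has all rectangles of the same Galois type over each finite $A_0 \subseteq A$ by indiscernibility, and then deduce constant Lascar type over $A_0$ directly from constant Galois type over $A_0$ — the subtlety there is only that Lascar type over $A$ is defined via finite subsets $A_0$, so one must check the witnessing automorphisms can be chosen over each $A_0$, which indiscernibility over $A$ gives for free.
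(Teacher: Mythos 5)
There is a genuine gap. Your central claim --- that ``any finite set of distinct rows (or columns) of the array is independent over $A$'' --- is unjustified and in fact false. The definition of an indiscernible array only requires that the weak type (and hence the dimension) of an $m\times n$ rectangle depend on $m$ and $n$ alone; it imposes no additivity. In the arrays actually used in this paper (rank-indiscernible of type $2m+n-1$, resp.\ $2m+n-2$), a single column restricted to $m$ fixed rows has dimension $2m$, but $n$ such columns together have dimension $2m+n-1 \neq 2mn$, so the columns are emphatically not independent over $A$. Your appeal to ``additivity of dimension forces independence'' has no basis, because nothing forces additivity. Since Lemmas \ref{si} and \ref{morleycofinal} both hypothesize independence, you cannot invoke them; moreover, Lemma \ref{si} already presupposes constant Lascar type along the sequence, which is exactly what is to be proved, so invoking it is circular.

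Your proposed ``clean way around'' is also wrong, and for an instructive reason. Indiscernibility does give that any two rectangles have the same Galois type over every finite $A_0 \subseteq A$, but same Galois type over $A_0$ does \emph{not} give same Lascar type over $A_0$: an $A_0$-invariant equivalence relation with boundedly many classes can (and generically does) split a single $\textrm{Aut}(\M/A_0)$-orbit into many classes. That gap is the entire content of the lemma, not a bookkeeping subtlety about which $A_0$ the automorphism fixes. What $\kappa \ge \o_1$ actually buys is the ability to extract, \emph{without} any independence hypothesis, a cofinal subsequence that is Morley over a model and hence strongly indiscernible over $A$ --- this is the extraction mechanism (Lemmas 2.24 and 2.26 of \cite{lisuriart}) already used in the proof of Lemma \ref{indiscpermu}, and the cited proof of the present lemma follows the same route. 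Once one has strong indiscernibility over $A$, extending the sequence past the number of classes of a given $E \in E(A_0)$ and applying pigeonhole yields constant Lascar type over $A_0$. Your argument replaces that extraction step with an independence hypothesis that the statement neither grants nor needs.
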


\begin{lemma}\label{indiscpermu}
Let $(f_{ij} : i \in \o_1, j \in \o_1)$ be an indiscernible array, and let $I_0 \subset I$ and $J_0 \subset J$ be finite.
If $\pi$ is a permutation of $I_0$ and $\pi'$ is a permutation of $J_0$, then 
$$t((f_{ij}:i \in I_0, j \in J_0)/\emptyset)=t((f_{ij}:i \in \pi(I_0), j \in \pi'(J_0))/\emptyset).$$ 
\end{lemma}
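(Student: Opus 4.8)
The plan is to reduce the statement about arbitrary permutations of the finite index sets $I_0$ and $J_0$ to the case of adjacent transpositions, and then to handle an adjacent transposition by an ``insertion'' trick using the indiscernibility in one direction to create a fresh row (or column) that can be slotted between the two indices we wish to swap. First I would observe that the symmetric group on a finite ordered set is generated by adjacent transpositions, and that, since $\pi$ acts on the rows and $\pi'$ on the columns independently, it suffices to treat the two cases separately: swapping two adjacent elements of $I_0$ while fixing $J_0$, and swapping two adjacent elements of $J_0$ while fixing $I_0$. By the symmetry of the definition of an indiscernible array in its two coordinates, the column case follows from the row case, so I would concentrate on showing
\[
t((f_{ij}: i \in I_0, j \in J_0)/\emptyset)=t((f_{ij}: i \in \pi(I_0), j \in J_0)/\emptyset)
\]
when $\pi$ is the transposition of two consecutive indices $i_k < i_{k+1}$ in $I_0$.

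The key step is the following: enumerate $I_0 = \{i_1 < \dots < i_n\}$, and pick three indices $p < q < r$ from $\o_1$ lying strictly between $i_{k-1}$ and $i_{k+2}$ (using that $\o_1$ is dense enough in the relevant gap, or, if the gap is empty, first re-index by applying indiscernibility to move the whole configuration into a region of $\o_1$ with room to spare — this is legitimate precisely because $t$ of a rectangle depends only on the side lengths). Since the array is indiscernible, the weak types over $\emptyset$ of the rectangle with rows $(\dots, i_{k-1}, p, q, \dots)$, of the rectangle with rows $(\dots, i_{k-1}, p, r, \dots)$, and of the rectangle with rows $(\dots, i_{k-1}, q, r, \dots)$ are all equal, because each is a rectangle with the same number of rows and columns arranged in increasing order. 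Comparing the first with the third shows that the pair of rows in positions $k$ and $k+1$ can be ``shifted'' as a block; comparing appropriately and composing these identifications realizes the transposition of the two middle rows as a product of type-preserving moves, since after enough shifts the roles of the two rows have been exchanged relative to their neighbours. More precisely, I would use that for any finite set of increasing indices the weak type of the corresponding rectangle is fixed, apply an automorphism of $\M$ witnessing equality of weak types over each finite subset, and chain these automorphisms; because weak type equality over $\emptyset$ means Galois-type equality over every finite subset, and finite tuples only involve finitely many of the $f_{ij}$, the chaining is finite and legitimate.

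The main obstacle I expect is bookkeeping: making the ``insert a fresh row between $i_k$ and $i_{k+1}$, then delete one of the outer rows'' argument precise so that it genuinely effects the swap of $f_{i_k,\bullet}$ and $f_{i_{k+1},\bullet}$ while leaving every other row of the rectangle — and its position — untouched. The cleanest way to organize this is to note that by indiscernibility any two $n$-element increasing tuples of row indices from $\o_1$ give rectangles of the same weak type over $\emptyset$; hence the weak type over $\emptyset$ of $(f_{ij}: i \in I_0, j \in J_0)$ depends only on $|I_0|$ and $|J_0|$, not on which increasing tuple $I_0$ is. Then the transposed configuration $(f_{ij}: i \in \pi(I_0), j \in J_0)$ is obtained from a genuine rectangle (over the increasing reordering of $\pi(I_0)$, which as a set is just $I_0$) by permuting the order in which we list the rows; but the \emph{set} of entries is identical, and what changes is only the enumeration. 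Here I would invoke Lemma \ref{samelascar} together with the fact that Lascar types are preserved under the relevant automorphisms, or, more directly, argue that for an indiscernible array over $\emptyset$ the group generated by adjacent transpositions acts by automorphisms fixing $\emptyset$ on each finite rectangle — which is exactly the content we want. Thus the real work is purely combinatorial: verifying that adjacent transpositions generate the full permutation action and that each adjacent transposition is realized by the shift-and-delete move, after which the conclusion for arbitrary $\pi, \pi'$ follows by composing finitely many such moves.
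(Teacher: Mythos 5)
Your reduction to adjacent transpositions and the split into row/column cases are fine, but the core step — realizing a single adjacent transposition of rows by an ``insertion/shift'' argument — has a genuine gap. Indiscernibility of the array only tells you that any two \emph{increasing} tuples of row indices give rectangles of the same weak type; it says nothing about what happens when you change the \emph{order} in which the rows are enumerated. Shifting a row index forward (comparing $(\ldots,p,q,\ldots)$ with $(\ldots,p,r,\ldots)$ with $(\ldots,q,r,\ldots)$) never changes the increasing order, so chaining such moves can never swap two rows: at every stage you have an increasing tuple, and what you need to reach is a non-increasing one. You acknowledge the gap at the end by proposing to ``invoke'' Lemma~\ref{samelascar} or to ``argue that adjacent transpositions act by automorphisms,'' but that is precisely the conclusion, not a lemma you can invoke without proof. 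The statement is genuinely not a formal consequence of order-indiscernibility: in $(\mathbb{Q},<)$ the sequence $1,2,3,\dots$ is order-indiscernible, yet $t(1,2)\neq t(2,1)$. So some additional structural input is required.

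The paper supplies that input from the stability-like machinery of the setting: it views the rows $(f_{ij}:j\in J_0)_{i<\o_1}$ as a sequence of length $\o_1$ and applies Lemmas 2.24 and 2.26 of \cite{lisuriart} to extract a cofinal $X\subset\o_1$ on which this sequence is \emph{strongly} indiscernible, i.e.\ every permutation of the subsequence indexed by $X$ extends to an automorphism of $\M$ (this is the Morley-sequence argument, which is where the non-trivial content lies). Indiscernibility of the array is then used only to transport $I_0$ into $X$ by an automorphism $g$, conjugate the permutation-automorphism by $g$, and return. Your proposal is missing exactly this ingredient: you need a reason why the row sequence is set-indiscernible and not merely order-indiscernible, and the combinatorics of inserting fresh rows cannot supply it.
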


\begin{proof}
We show first that $t((f_{ij}:i \in I_0, j \in J_0)/\emptyset)=t((f_{ij}:i \in \pi(I_0), j \in J_0)/\emptyset).$
Consider the sequence $(f_{ij}: j \in J_{0})_{i \in \o_1}$.
By Lemmas 2.24 and 2.26 in \cite{lisuriart}, there is some cofinal $X \subset \o_1$ such that every permutation of the sequence $(f_{ij}: j \in J_{0})_{i \in X}$ extends to an automorphism of $\M$.
Since $f$ is indiscernible, every order-preserving function from $I_0$ to $X$ extends to an automorphism.
Thus, we can first send $I_0$ into $X$ using some automorphism $g$, then apply the automorphism corresponding to $\pi$ and finally take the permuted sequence back using $g^{-1}$. 

We can now repeat the argument with $j$ in place of $i$ to prove the Lemma.
\end{proof}

The following lemma formulates Theorem \ref{field1} in terms of indiscernible arrays.
 
 \begin{lemma}\label{groupexists}
 Let $\M$ be a quasiminimal pregeometry structure, and suppose there is a finite set $B$ and an indiscernible array 
 $f=(f_{ij} : i,j< \omega_1)$ of elements of $\M$ of type $\alpha(m,n)=2m+n-2$ for $m,n \ge 2$ such that $dim(f;1,k)=k$ for any $k$.  
Then, either there is an algebraically closed field or a non-classical group in $\M^{eq}$. 
 \end{lemma}
 
\begin{proof}
Like in the proof of Lemma 6.3 in \cite{HrZi}, we can find a field configuration in $\M$, and Theorem \ref{field1} then yields an algebraically closed field.
\end{proof}

\section{Fields in Zariski-like structures}

At the end of this section, we will prove our main result: that an algebraically closed field can be found in a non locally modular Zariski-like structure that doesn't interpret a non-classical group (Theorem \ref{MAIN}).
Here we adapt the ideas behind the proof of Lemma 6.11 in \cite{HrZi} to our setup.
 
Before we can list the axioms for a Zariski-like structure, we need some auxiliary definitions.
First, we generalize the notion of specialization from \cite{HrZi} to our context.

\begin{definition}\label{speciso}
Let $\M$ be a monster model for a quasiminimal class, $A \subset \M$, and let $\mathcal{C}$ be a collection of subsets of $\M^n$, $n=1,2,\ldots$.
We say that a function $f: A \to \M$ is a \emph{specialization} (with respect to $\mathcal{C}$) if for any $a_1, \ldots, a_n \in A$ and for any $C \in \mathcal{C}$, it holds that if $(a_1, \ldots, a_n) \in C$, then $(f(a_1), \ldots, f(a_n)) \in C$.
If $A=(a_i : i \in I)$, $B=(b_i:i\in I)$ and the indexing is clear from the context, we write $A \to B$ if the map $a_i \mapsto b_i$, $i \in I$, is a specialization.

We say that the specialization $f$ is an \emph{isomorphism} if also the converse holds, that is if $C \in \mathcal{C}$ and $(f(a_1), \ldots, f(a_n)) \in C$, then $(a_1, \ldots, a_n) \in C$.

If $a$ and $b$ are finite tuples and $a \to b$, we denote $\textrm{rk}(a \to b)=\textrm{dim}(a/\emptyset)-\textrm{dim}(b/\emptyset)$.
\end{definition}

The specializations in the context of Zariski geometries in \cite{HrZi} are specializations in the sense of our definition if we take $\mathcal{C}$ to be the collection of closed sets (Zariski geometries are quasiminimal since they are strongly minimal).
 
Next, we generalize the definition of regular specializations from \cite{HrZi}.  

\begin{definition}
Let $\M$ be a monster model for a quasiminimal class.
We define a \emph{strongly regular} specialization recursively as follows:
\begin{itemize}
\item Isomorphisms are strongly regular;
\item If $a \to a'$ is a specialization and $a \in \M$ is generic over $\emptyset$, then $a \to a'$ is strongly regular;
\item $aa' \to bb'$ is strongly regular if $a \downarrow_\emptyset a'$ and the specializations $a \to b$ and $a' \to b'$ are strongly regular.
\end{itemize}
\end{definition}

\begin{remark}
It follows from Assumptions 6.6 (7) in \cite{HrZi} (for a more detailed discussion on why these properties hold in a Zariski geometry, see \cite{lisuri}, Chapter 1.1.) that if a specialization on a Zariski geometry is strongly regular in the sense of our definition, then it is regular in the sense of \cite{HrZi} (definition on p. 25).
\end{remark}
  
The following generalizes the definition of good specializations from \cite{HrZi}.

\begin{definition}\label{good}
We define a \emph{strongly good} specialization recursively as follows.
Any strongly regular specialization is strongly good.
Let $a=(a_1, a_2, a_3)$, $a'=(a_1', a_2', a_3')$, and $a \to a'$.
Suppose:
\begin{enumerate}[(i)]
\item $(a_1, a_2) \to (a_1', a_2')$ is strongly good.
\item $a_1 \to a_1'$ is an isomorphism.
\item $a_3 \in \textrm{cl}(a_1)$.
\end{enumerate}
Then, $a \to a'$ is strongly good.
\end{definition}

A Zariski-like structure is defined by nine axioms as follows.  

\begin{definition}
We say that a quasiminimal pregeometry structure $\M$ is \emph{Zariski-like} if there is a countable collection $\mathcal{C}$ of subsets of $\M^n$ ($n=1, 2, \ldots$), which we call the \emph{irreducible} sets, satisfying the following axioms (all specializations are with respect to $\mathcal{C}$).
 
\begin{enumerate}[(1)]
\item Each $C \in \mathcal{C}$ is Galois definable over $\emptyset$.
\item For each $a \in \M$, there is some $C \in \mathcal{C}$ such that $a$ is generic in $C$.
\item If $C \in \mathcal{C}$, then the generic elements of $C$ have the same Galois type. 
\item If $C,D \in \mathcal{C}$, $a \in C$ is generic and $a \in D$, then $C \subseteq D.$
\item If $C_1, C_2 \in \mathcal{C}$, $(a,b) \in C_1$ is generic, $a$ is a generic element of $C_2$ and $(a',b') \in C_1$, then $a' \in C_2$.
\item If $C \in \mathcal{C}$, $C \subset \M^n$, and $f$ is a coordinate permutation on $\M^n$, then $f(C) \in \mathcal{C}$.
\item Let $a \to a'$ be a strongly good specialization  and let $rk(a \to a') \le 1$.
Then any specializations $ab \to a'b'$, $ac \to a'c'$  can be amalgamated: there exists $b^{*}$, independent from $c$ over $a$ such that $\textrm{t}^g(b^{*}/a)=\textrm{t}^g(b/a)$, and $ab^{*}c \to a'b'c'$.
\item Let $(a_i:i\in I)$ be independent and indiscernible over $b$.
Suppose $(a_i':i\in I)$ is indiscernible over $b'$, and $a_ib \to a_i'b'$ for each $i \in I$.
Further suppose $(b \to b')$ is a strongly good specialization  and $rk(b \to b') \le 1$.
Then, \mbox{$(ba_i:i \in I) \to (b'a_i':i\in I)$.}

\item Let $\kappa$ be a (possibly finite) cardinal and let  $a_i, b_i \in \M$ with $i < \kappa$, such that $a_0 \neq a_1$ and $b_0=b_1$. 
Denote by $\mathcal{P}_{<\omega}(\kappa)$ the set of all finite subsets of $\kappa$.
Suppose $(a_i)_{i<\kappa} \to (b_i)_{i <\kappa}$ is a specialization.
Assume there is some unbounded and directed $S \subset \mathcal{P}_{<\omega}(\kappa)$ satisfying the following conditions:
\begin{enumerate}[(i)]
\item  $0,1 \in X$ for all $X \in S$;
\item For all $X,Y \in S$ such that $X \subseteq Y$,  and for all sequences $(c_i)_{i \in Y}$ from $\M$, the following holds: 
If $c_0=c_1$, $ (a_i)_{i \in Y} \to (c_i)_{i \in Y} \to (b_i)_{i \in Y},$
and $\textrm{rk}((a_i)_{i \in Y} \to (c_i)_{i \in Y}) \le 1$, then $\textrm{rk}((a_i)_{i \in X} \to (c_i)_{i \in X}) \le 1$.
\end{enumerate}
 
Then, there are $(c_i)_{i<\kappa}$ such that   
$$(a_i)_{i < \kappa} \to (c_i)_{i < \kappa} \to (b_i)_{i < \kappa},$$
$c_0=c_1$ and $\textrm{rk}((a_i)_{i \in X} \to (c_i)_{i \in X}) \le 1$ for all $X \in S$.
 \end{enumerate}
\end{definition}
 
\begin{definition}
Let $\M$ be Zariski-like and let $a \in \M$.
By axioms (2) and (4), there is a unique $C \in \mathcal{C}$ such that $a$ is generic in $C$.
The set $C$ is called the \emph{locus} of $a$.
\end{definition}

\begin{remark}\label{isoinj}
Note that on Zariski-like structures, specializations that are isomorphisms in the sense of Definition \ref{speciso} are injective.
Indeed, suppose that $M$ is Zariski-like, $f$ is a specialization that is an isomorphism, and $f(a)=f(b)$ for some $a,b \in M$.
Then, by Axiom (3), $t^g(a,b/\emptyset)=t^g(f(a),f(b)/\emptyset)=t^g(f(a),f(a)/\emptyset)$, so $a=b$.
\end{remark}

\begin{remark}
Zariski geometries are Zariski-like if the collection $\mathcal{C}$ is taken to be the irreducible closed sets definable over $\emptyset$.
Indeed, strongly minimal structures are quasiminimal, and the conditions (1)-(9) are satisfied.
On a Zariski geometry, first-order types imply Galois types.
Moreover, every strongly regular specialization is regular, and every strongly good specialization is good.
Hence, (7) is Lemma 5.14 in \cite{HrZi} and (8) is Lemma 5.15 in \cite{HrZi}.
For (9), we note that if the assumptions hold, then by the Dimension Theorem of Zariski geometries (Lemma 4.13 in \cite{HrZi}), 
the conclusion holds for finite subsets of $S$, and thus (9) holds by Compactness.
Indeed, if $S_0 \subset S$ is finite, we may, since $S$ is directed, without loss assume that some $Y \in S_0$ contains all other members of $S_0$ as subsets.
By the Dimension Theorem, there are $(c_i)_{i \in Y}$ such that $(a_i)_{i \in Y} \to (c_i)_{i \in Y} \to (b_i)_{i \in Y}$, $c_0=c_1$, and $rk( (a_i)_{i \in Y} \to (c_i)_{i \in Y}) \le 1$. It follows from condition (ii) that $rk( (a_i)_{i \in X} \to (c_i)_{i \in X}) \le 1$ for all $X \in S_0$.

The cover of the multiplicative group of an algebraically closed field of characteristic zero, studied in e.g. \cite{Zilber}, is a non-trivial example of a Zariski-like structure.
On the cover, a topology can be introduced by taking the sets definable by positive, quantifier-free formulae as the basic closed sets, as is done in \cite{Lucy}.
In \cite{coverart}, it is shown that if $\mathcal{C}$ is taken to consist of the irreducible, closed sets that are definable over the empty set (after adding countably many symbols to the language), then the axioms are satisfied.
\end{remark}

\begin{remark}
We note that in \cite{HrZi}, the counterpart of (8) (Lemma 5.15) is deduced from the counterpart of (7) (Lemma 5.14).
However, the argument uses the fact that in the first order strongly minimal setting, there are only finitely many strong types over a finite set.
Thus, it cannot be adapted to our setting where we can have (countably) infinitely many Lascar types over a finite set,
so we have taken (8) as an axiom.
\end{remark}

Next, we note that Axiom (9) of Zariski-like structures implies the usual dimension theorem of Zariski geometries (in the form of Lemma 4.13 in \cite{HrZi}).

\begin{lemma}\label{dimthm}
Let $\M$ be a Zariski-like structure, and suppose $a=(a_0, \ldots, a_{n-1})$ and $b=(b_0, \ldots, b_{n-1})$ are such that $a \to b$, $a_0 \neq a_1$ and $b_0=b_1$.
Then, there is some $c=(c_0, \ldots, c_n)$ such that $a \to c \to b$, $rk(a\to c) \le 1$ and $c_0=c_1$.
\end{lemma}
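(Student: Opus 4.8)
The plan is to derive Lemma \ref{dimthm} directly from Axiom (9), by packaging the hypotheses of that axiom into a form that applies to the tuple $a=(a_0,\ldots,a_{n-1})$. The key point is that Axiom (9) is exactly a "local-to-global" statement about decreasing rank by at most $1$ along a specialization, and Lemma \ref{dimthm} is the special case where the index set $\kappa=n$ is finite and the directed system $S$ can be taken to be everything containing $\{0,1\}$. So first I would set $\kappa=n$ and take $S=\{X\in\mathcal{P}_{<\omega}(n)\mid 0,1\in X\}$; this is directed (closed under unions) and unbounded in $\mathcal{P}_{<\omega}(n)$ in the relevant sense, and condition (i) of Axiom (9) holds by construction.

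The substance is verifying condition (ii): for $X\subseteq Y$ in $S$ and any sequence $(c_i)_{i\in Y}$ with $c_0=c_1$, $(a_i)_{i\in Y}\to(c_i)_{i\in Y}\to(b_i)_{i\in Y}$, and $\mathrm{rk}((a_i)_{i\in Y}\to(c_i)_{i\in Y})\le 1$, one must conclude $\mathrm{rk}((a_i)_{i\in X}\to(c_i)_{i\in X})\le 1$. But $\mathrm{rk}$ here is defined as a difference of dimensions, $\dim((a_i)_{i\in X}/\emptyset)-\dim((c_i)_{i\in X}/\emptyset)$, and restricting a tuple to a subtuple cannot increase this difference: specializations only decrease dimension, so $\dim((c_i)_{i\in X}/\emptyset)\le \dim((a_i)_{i\in X}/\emptyset)$ gives $\mathrm{rk}\ge 0$, while monotonicity of dimension under subtuples together with the additivity/submodularity of $\dim$ forces $\mathrm{rk}((a_i)_{i\in X}\to(c_i)_{i\in X})\le \mathrm{rk}((a_i)_{i\in Y}\to(c_i)_{i\in Y})\le 1$. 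Concretely, if $X\subseteq Y$ then $(a_i)_{i\in Y}$ determines $(a_i)_{i\in X}$ and likewise for $c$, and the dimension drop on the smaller tuple is bounded by the drop on the larger one because any witness to independence that is lost must already have been lost on the larger tuple. I would spell this out as: $\dim((a_i)_{i\in Y})-\dim((c_i)_{i\in Y}) = \dim((a_i)_{i\in Y}) - \dim((c_i)_{i\in X}) - \dim((c_i)_{i\in Y}/(c_i)_{i\in X})$, and then compare with the analogous decomposition for the $a$'s, using that $(a_i)_{i\in Y}\to(c_i)_{i\in Y}$ restricts to $(a_i)_{i\in Y\setminus X}\to(c_i)_{i\in Y\setminus X}$ over the respective base tuples so that the conditional drop is nonnegative.

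Having checked the hypotheses, Axiom (9) yields a sequence $(c_i)_{i<n}$ with $(a_i)_{i<n}\to(c_i)_{i<n}\to(b_i)_{i<n}$, $c_0=c_1$, and $\mathrm{rk}((a_i)_{i\in X}\to(c_i)_{i\in X})\le 1$ for all $X\in S$; applying this with $X=\{0,1,\ldots,n-1\}=n$ itself (which lies in $S$) gives $\mathrm{rk}(a\to c)=\mathrm{rk}((a_i)_{i<n}\to(c_i)_{i<n})\le 1$. Writing $c=(c_0,\ldots,c_{n-1})$ — note the statement's indexing "$c_0,\ldots,c_n$" is just a typo for an $n$-tuple matching $a$ and $b$ — this is exactly the conclusion of Lemma \ref{dimthm}. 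I expect the only genuinely delicate point to be the monotonicity argument in condition (ii): one must be careful that $\mathrm{rk}$ is literally a dimension difference and invoke the right submodularity property of the pregeometry dimension, rather than a rank function in the first-order sense; but since $\dim$ here comes from the bcl-pregeometry and satisfies the exchange and additivity laws, this goes through routinely.
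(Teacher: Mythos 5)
Your overall strategy is right — set $\kappa = n$ and invoke Axiom (9) — and this is indeed how the paper proves the lemma. But your choice of $S$ creates a genuine gap, because the monotonicity you invoke to check condition (ii) is false. The rank $\mathrm{rk}$ is \emph{not} monotone under passing to subtuples. Here is a concrete counterexample inside an algebraically closed field (a Zariski geometry, hence Zariski-like): let $V = \{(x,y,z) : y = xz\}$, take $a = (a_0, a_0 a_2, a_2)$ with $a_0, a_2$ algebraically independent transcendentals (so the locus of $a$ is $V$, $\dim(a) = 2$, and $\dim(a_0,a_1) = 2$), take $c = (0,0,t)$ with $t$ transcendental (so $c \in V$, $c_0 = c_1$, $\dim(c)=1$), and take $b = (0,0,0)$, so $a \to c \to b$ and $b_0 = b_1$. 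Then with $Y = \{0,1,2\}$ and $X = \{0,1\}$ one computes $\mathrm{rk}(a_Y \to c_Y) = 2-1 = 1$ but $\mathrm{rk}(a_X \to c_X) = 2 - 0 = 2$. The point is precisely upper semicontinuity of fiber dimension: at a special point the fiber of $V$ over $(x,y)$ can jump, so the conditional dimension $\dim(c_{Y\setminus X}/c_X)$ can exceed $\dim(a_{Y\setminus X}/a_X)$, reversing the inequality you need. This ``rank monotonicity'' is in fact closely related to the content of the dimension theorem you are trying to prove, so assuming it is circular.

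The fix is to shrink $S$: take $S = \{n\}$, the singleton whose only member is the full index set $\{0,\ldots,n-1\}$. This is still directed and unbounded in $\mathcal{P}_{<\omega}(n)$ (since $\kappa = n$ is finite, the full set dominates everything), and condition (i) holds. Condition (ii) is now \emph{vacuous} — the only case is $X = Y = n$ — so there is nothing to verify. Axiom (9) then hands you exactly a tuple $c$ with $a \to c \to b$, $c_0 = c_1$, and $\mathrm{rk}(a\to c)\le 1$. This is the paper's one-line proof. You were after the same application of Axiom (9) and correctly noted that you only need $n \in S$ in the end, but enlarging $S$ beyond the singleton buys you nothing and costs you an unprovable (indeed false) verification.

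Your observation that $c = (c_0,\ldots,c_n)$ in the statement should read $c = (c_0,\ldots,c_{n-1})$ is correct; it is a typo.
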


\begin{proof}
Taking $\kappa=n$ and $S=\{n\}$, the conditions  (i) and (ii) of Axiom 9 hold trivially.
\end{proof}

In Zariski geometries, a specialization either drops the rank of a tuple or preserves its (first order) type.
The analogue holds in our setting.

\begin{remark}\label{specciremark}
From Axiom (3), it follows that if $a \to b$ is a specialization, then either $dim(b)<dim(a)$ or $t^g(a/\emptyset)=t^g(b/\emptyset)$.
\end{remark} 

\begin{lemma}\label{finarray}
Let $A$ and $A'$ be finite arrays.
Let $A_{11}=(a,b)$ and $A_{11}'=(a',b')$ for some tuples $a, b, a',b' \in \M$ such that $a \neq a'$.  
Suppose that for some $c \in \M$ it holds that $a'cA \to a'cA'$.

Then, there is an array $A''$ and a tuple $b'' \in \M$ such that $A_{11}''=(a',b'')$, $dim(A''/a'c)=dim(A/a'c)-1$, and $a'cA \to a'cA'' \to a'cA'$.
\end{lemma}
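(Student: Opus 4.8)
The plan is to apply the dimension theorem (Lemma~\ref{dimthm}) to a suitable finite tuple enumerating the array, then promote the resulting intermediate tuple back into array form. First I would fix an enumeration of the relevant data: write the array $A$ as a single finite tuple, placing the entry $A_{11}=(a,b)$ first so that its first two coordinates are (say) $a_0$ from $a$ and $a_0'$-copy --- more carefully, I want the hypothesis $a\neq a'$ and the conclusion $b_0=b'_0$ of Lemma~\ref{dimthm} to be visible as a disagreement that becomes an agreement. So I would form the tuple $u$ whose initial segment codes the pair $(a,a')$ at positions $0,1$ (using that $a\neq a'$ gives two distinct coordinates somewhere) and whose remaining coordinates enumerate $A$, all taken over the base $a'c$; and the tuple $v$ obtained by applying the given specialization $a'cA\to a'cA'$, so that the positions coding $a$ and $a'$ both map to $a'$ (hence become equal) while the rest maps to $A'$. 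Since $a'cA\to a'cA'$ is a specialization and specializations are preserved under adding fixed parameters, $u\to v$ is a specialization with $u_0\neq u_1$, $v_0=v_1$.

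Next I would invoke Lemma~\ref{dimthm} on $u\to v$ (working over the parameters $a'c$, which is harmless since everything in sight is a specialization fixing $a'c$) to obtain an intermediate tuple $w$ with $u\to w\to v$, $\mathrm{rk}(u\to w)\le 1$, and $w_0=w_1$. The entries of $w$ corresponding to the coordinates of $A$ then form a candidate array $A''$, and the entry corresponding to the $b$-part of $A_{11}$ is the candidate $b''$; the condition $w_0=w_1$ translates into the requirement $A_{11}''=(a',b'')$ (the coordinate that was $a$ in $A$ and $a'$ in $A'$ gets pinned to $a'$ in $A''$), using that $a'\to a'$ and $c\to c$ are the identity. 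The rank bound $\mathrm{rk}(u\to w)\le 1$, combined with the fact that $a'c\to a'c$ contributes nothing to the rank, gives $\dim(A''/a'c)=\dim(A/a'c)-1$ provided the rank actually dropped by exactly one; I would argue the drop is exactly $1$ because on one hand $\mathrm{rk}\le 1$, and on the other hand $w_0=w_1$ forces a genuine collapse (the two distinct coordinates $u_0\neq u_1$ became equal), so the dimension strictly decreased and hence dropped by precisely~$1$. The chain $a'b''A\to a'b''A''\to a'b''A'$ is then just $u\to w\to v$ rewritten with the common parameters $a',b''$ displayed (noting $b''$ specializes to $b''$ and to $b'$ appropriately along the two arrows).

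The main obstacle I anticipate is bookkeeping rather than conceptual: organizing the enumeration so that (a) the two ``disagreeing then agreeing'' coordinates demanded by Lemma~\ref{dimthm} genuinely come from the $a$-versus-$a'$ discrepancy, (b) the intermediate tuple $w$ really does reassemble into an array $A''$ of the same shape with $A_{11}''=(a',b'')$ --- here one must check that the coordinate of $A$ that equals the corresponding coordinate of $A'$ in the specialization is forced by $w_0=w_1$ to take the value $a'$, and that no other coordinate is spuriously constrained --- and (c) the dimension count $\dim(A''/a'c)=\dim(A/a'c)-1$ follows cleanly, for which I would use that $\mathrm{rk}$ is defined as a difference of dimensions and that adding the fixed parameters $a'c$ (already present on both sides) shifts both dimensions equally. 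All three are routine given Lemma~\ref{dimthm} and Remark~\ref{specciremark}, but they require care about which coordinates are being identified. I would also remark that the hypothesis $a'cA\to a'cA'$ (rather than merely $aA\to a'A'$) is exactly what lets us keep $a'$ and $c$ as untouched parameters throughout, which is essential for the dimension arithmetic.
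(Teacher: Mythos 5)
Your overall plan matches the paper's: enumerate $a'cA$ and $a'cA'$ so that the two coordinates carrying $a$ (inside $A_{11}$) and $a'$ (the leading parameter) are the designated pair $0,1$, apply Lemma~\ref{dimthm} to get an intermediate tuple $w$ with $\mathrm{rk}\le 1$ and $w_0=w_1$, then argue the rank is exactly $1$ because $w_0=w_1$ would contradict an isomorphism via Remark~\ref{specciremark}. Those pieces are all present and correct in your proposal.

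There is, however, a genuine gap at the step where you say the parameters survive: ``working over the parameters $a'c$, which is harmless since everything in sight is a specialization fixing $a'c$,'' and later that the colliding coordinate ``is forced by $w_0=w_1$ to take the value $a'$.'' Neither of these is automatic. Lemma~\ref{dimthm} is stated over $\emptyset$; the intermediate tuple $w$ it produces need not agree with $a'cA$ on the positions carrying $a'$ and $c$ --- you only know those positions specialize onward to $a'$ and $c$, and that $w_0=w_1$, so the common value at positions $0,1$ is some $a^*$, not necessarily $a'$, and the $c$-block of $w$ is some $c^*$, not necessarily $c$. Consequently your candidate $A''$ (the $A$-part of $w$) need not satisfy $A''_{11}=(a',b'')$, and $\dim(A''/a'c)$ is not obviously related to $\mathrm{rk}(u\to w)$. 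The paper repairs this by observing that the restricted chain $a'c \to a^*c^* \to a'c$ has no dimension drop, so by Remark~\ref{specciremark} (i.e.\ Axiom~(3)) there is an automorphism $f$ with $f(a^*c^*)=a'c$; composing the middle tuple with $f$ (which preserves specializations because the sets in $\mathcal{C}$ are $\emptyset$-Galois-definable) pins the parameters back to $a'c$ and only then does $b'':=f(b^*)$ give $A''_{11}=(a',b'')$ together with the dimension identity. You should insert this automorphism/normalization step; as written, the assertion that $w$ keeps $a'$ and $c$ in place is unjustified and is exactly the point the paper's proof is careful about.
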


\begin{proof}
After suitably rearranging the indices, we may think of $a'cA$ and $a'cA'$ as tuples $(a',a, \ldots)$ and $(a',a', \ldots)$, respectively.
By Lemma \ref{dimthm}, there is an array $A^*$ and some $a^*, b^*, c^* \in \M$ such that $A^*_{11}=(a^*,b^*)$, that $a'cA \to a^*c^*A^* \to a'cA'$, and $dim(a^*c^*A^*)\ge dim(a'cA)-1$.
By our assumptions and Remark \ref{specciremark}, equality holds.
Since $a'c \to a^*c^* \to a'c$, there is by Axiom (3) of Zariski-like structures, some automorphism $f$ such that $f(a^*c^*)=a'c$.
Then, $f(A^*)$ is as wanted (taking $b''=f(b^*))$.
\end{proof}
Our main theorem (\ref{MAIN})
will state that a Zariski-like structure with a non locally modular canonical pregeometry interprets either an algebraically closed field or a non-classical group.
It is an analogue of Lemma 6.11 in \cite{HrZi}, and many of the details of the proof are direct adaptations from \cite{HrZi}.
There, the authors first deduce the existence of a $1$-dimensional Abelian group $H$, and then use its elements and a family of plane curves of rank at least $2$ to construct an infinite indiscernible array of type $2m+n-2$, which then implies the existence of the field.

Their argument goes roughly as follows.
Suppose $C$ is a family of plane curves of rank at least $2$. 
Take $a_i$, $b_i$, $b^j$, $i,j<\omega$, to be an independent sequence of generic elements of the group $H$, and let $b_{ij}=b_i+b^j$.
For each pair $(i,j)$ there is a curve in $C$ that passes through the points $(a_0, b_{00})$ and 
$(a_i, b_{ij})$.
This curve is parameterized by some $e_{ij}$. 
For each finite number $N$, let $A$ be an $N \times N$ array such that $A_{ij}=(a_i, b_{ij}, e_{ij})$.
Using the fact that the family $C$ has rank at least $2$, it can be calculated that the array is of type $2m+n-1$ (over $a_0 b_{00}$).
It is then shown that there is a specialization from $A$ to a certain array $A''$, and the
dimension theorem, together with a technical lemma (6.9 in \cite{HrZi}), is applied to this specialization to obtain an $N \times N$ -array $A^*$ of type $2m+n-2$.
Finally, Compactness yields an infinite array of type $2m+n-2$.

The main difference in our setting is that we don't have Compactness.
Moreover, since we work with bounded closures rather than algebraic closures,
we need our array to be uncountable, not just infinite.
Hence, we start by constructing a very large (not necessarily indiscernible) array and use a combinatorial trick due to Shelah to obtain an indiscernible array $A$ of size $\omega_1 \times \omega_1$.

By \cite{lisuriart}, a 1-dimensional group exists also in our setting, and it is Abelian assuming that the structure does not interpret non-classical groups.
Both here and in \cite{HrZi}, the group is in $\M^{eq}$ rather than in $\M$.
In \cite{HrZi}, the problem of elimination of imaginaries is tackled using special sorts (see \cite{HrZi}, p. 18).
We take a somewhat different route and ``code" the group elements using tuples in $\M$ (see Lemma \ref{koodaus}).
When talking about a coding, it is usually expected that elements have unique codes.
However, this will not hold in our case, so we speak about \emph{weak codes}.
In the proof, the group elements are used to make certain relationships hold inside the arrays,
and thus they are mainly used for dimension calculations. 
Our coding preserves dimensions and independence calculations (over certain parameters), and hence suffices for the proof.
An approach similar to that in \cite{HrZi} would probably work also in our setting,
but we found it more convenient to use the weak codes.

Using the codes for the group elements, 
we follow \cite{HrZi} in constructing arrays $A$ and $A''$ such that $A \to A''$. 
Further adapting the argument, we then apply the dimension theorem and an analogue of Lemma 6.9 in \cite{HrZi} (our Lemma \ref{tekn2})
to finite subarrays to obtain finite arrays $A^*$ that are rank indiscernible of type $2m+n-2$.
At the end of the proof, we use the properties of these arrays to deduce the existence of an infinite array from Axiom (9) of Zariski-like structures
(here, the sets in the collection $S$ are taken to be the finite subarrays).
This results in an infinite array of type $2m+n-2$ (the sequence $(c_i)_{i<\kappa}$  
of Axiom (9)). 
Finally, we apply the Shelah trick again to make sure that the array obtained is indiscernible (this can be done if all the arrays in the beginning are constructed to be large enough).
Thus, Axiom (9) works as our analogue for Compactness.  
  
In \cite{HrZi}, the fact that specializations preserve the group addition (6.6.(6)) is used to deduce that the array $A^*$ has all the properties necessary to apply Lemma 6.9.
Here, too, we take a different route, and instead of proving an analogue for 6.6(6) use some auxiliary group elements to make sure that all the necessary relations hold.
The result needed for this is captured in our Lemma \ref{binterbound}.  
This is the source of most of the differences in technicalities between the proofs in the two settings. 
For example, to include codes for these auxiliary elements into the specializations between the arrays, 
we start by constructing an array $B$ where every entry contains certain auxiliary elements in addition to the entries of $A$.
We expect that an analogue for 6.6(6) of \cite{HrZi} could probably be proved in our setting, 
but again, we find it more convenient to take a different path.

In \ref{families}, we will present our analogues for the first order notions of plane curves and families thereof.
In \ref{cooding}, we will discuss the weak coding, and in \ref{prooof}, we will prove the main theorem.
 
We now start working towards it with a technical lemma that is the analogue of Lemma 6.9 in \cite{HrZi}.
The proof is a straightforward adaptation, but we present it here since \cite{HrZi} does not give much detail.
It will be applied in the proof of our main theorem to obtain an uncountable indiscernible array of type $2m+n-2$.
The existence of the field will then follow from Lemma \ref{groupexists}.
 
 \begin{lemma}\label{tekn2}
Let $(A_{ij} : 1 \le i \le M, 1 \le j \le N)$ be a subarray of an indiscernible array of size $\o_1 \times \o_1$, over some finite tuple $b$.
Assume $dim(A;m,n/b)=2m+n-1$ and that
\begin{displaymath}
dim(\textrm{dcl}(A_{11} A_{12} A_{13}b) \cap \textrm{dcl}(A_{21} A_{22} A_{23}b)/b)=2.
 \end{displaymath}
 
Let $b(A_{ij} : i \le M, j \le N) \to b(a_{ij} : i \le M, j \le N)$ be a rank-$1$ specialization, and suppose $\textrm{Lt}(a_{ij}/b)=\textrm{Lt}(a_{i'j'}/b)$ for all $i,i' \le M, j,j' \le N$ and $dim(a;1,k/b)=k$.
Further assume $bA_{ij} A_{ij'} A_{ij''} \to ba_{ij} a_{ij'} a_{ij''}$ is strongly good for any $i, j, j', j''$.
Then $a$ is rank-indiscernible, of type $2m+n-2$ ($m \ge 2$, $n \ge 2$) over $b$.  
\end{lemma}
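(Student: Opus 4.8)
The plan is to reduce the statement about the specialized array $a$ to an application of the dimension-theoretic machinery available for Zariski-like structures, using the hypotheses about strongly good rank-$1$ specializations together with Axiom (9) and Lemma \ref{dimthm} / Lemma \ref{finarray}. The target is to show that for all $m \ge 2$, $n \ge 2$, the $m \times n$ rectangles of $a$ all have dimension $2m+n-2$ over $b$; since $a$ is obtained from an indiscernible array $A$ by a specialization and $\textrm{Lt}(a_{ij}/b)$ is constant, rank-indiscernibility of the prescribed type $\alpha(m,n) = 2m+n-2$ will follow once the dimension count is established (the Lascar-type homogeneity plus Lemma \ref{samelascar} upgrades the dimension equalities to a genuine rank-indiscernibility statement).

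First I would fix notation: since $b(A_{ij}) \to b(a_{ij})$ has rank $1$, we have $dim(a;m,n/b) = dim(A;m,n/b) - (\text{drop}) = 2m+n-1 - (\text{drop})$ where the total drop over the full $M \times N$ array is $1$ by hypothesis. The key point is that the drop must be ``localized'': because the specialization restricted to any sub-rectangle is again a specialization, each sub-rectangle's dimension drops by $0$ or $1$, and I want to show the $2 \times 2$ rectangle (hence every larger rectangle containing it) drops by exactly $1$ while the $1 \times k$ rectangles drop by $0$ (the latter is given: $dim(a;1,k/b)=k$). So the real content is: the drop in an $m \times n$ rectangle ($m,n \ge 2$) is exactly $1$, not $0$. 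If it were $0$ then by Remark \ref{specciremark} the rectangle would have the same Galois type as the corresponding rectangle of $A$, and in particular (using the strongly good hypothesis on the triples $bA_{ij}A_{ij'}A_{ij''}$) the two rows would remain ``as independent as before''; I would derive a contradiction with the hypothesis $dim(\textrm{dcl}(A_{11}A_{12}A_{13}b) \cap \textrm{dcl}(A_{21}A_{22}A_{23}b)/b) = 2$ — that intersection element is precisely what forces a genuine (rank $1$) degeneracy after specialization, mirroring the role of the ``coordinate $b$'' in the group configuration. Here I would invoke Lemma \ref{finarray}: applied with $A_{11} = (\cdot,\cdot)$ being the first row's data and the common specialized value as $b'$-analogue, it produces an intermediate array realizing the rank-$1$ drop exactly, and then Axiom (9) (via Lemma \ref{dimthm}) pins down that the drop cannot be spread thinner than one sub-rectangle.

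The precise bookkeeping would go row by row and column by column using indiscernibility: by Lemma \ref{indiscpermu}-type permutation invariance of the original array $A$ (and the fact that the specialization and strong-goodness hypotheses are stated uniformly in $i,j,j',j''$), it suffices to compute $dim(a;2,2/b)$, $dim(a;2,n/b)$ and $dim(a;m,n/b)$ for the ``initial'' rectangles. For $dim(a;1,n/b) = n$ this is assumed. For a single row adjoined to a $1 \times n$ block I would use the strongly good rank-$1$ specialization on triples within a row together with Axiom (7) (amalgamation of specializations over a strongly good base) to show the second row contributes its generic amount minus the one unit of degeneracy, giving $dim(a;2,n/b) = n + (n) - (\text{overlap})$; the overlap is controlled to be exactly $n-2$ using the $2$-dimensional intersection hypothesis propagated along the row by column-indiscernibility, yielding $2n - (n-2)\cdot 0$ — I would need to be careful that the contribution is $2 + (n-2)\cdot 1$ per the type $2m+n-2$, i.e. $dim(a;2,n/b) = n+2$. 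Iterating in $m$: each new row after the second adds exactly $2$ to the dimension (two ``new'' coordinates' worth, with the rest determined), again by strong-goodness plus Axiom (8) (propagation of specializations along an independent indiscernible sequence of rows). This recursion gives $dim(a;m,n/b) = (n+2) + 2(m-2) = 2m+n-2$ for $m,n \ge 2$, as required, and simultaneously the uniform Lascar type of entries plus Lemma \ref{samelascar} gives that all $m \times n$ rectangles have equal dimension — hence $a$ is rank-indiscernible of type $2m+n-2$ over $b$.

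The main obstacle I anticipate is controlling exactly where and how much the dimension drops: proving the drop is \emph{exactly} $1$ in every rectangle of size $\ge 2 \times 2$ (and $0$ in rows) rather than potentially $0$, and showing that the $2$-dimensional intersection hypothesis on $\textrm{dcl}$'s survives the specialization in the appropriate weakened form. This is where the strongly good hypothesis on the triples $bA_{ij}A_{ij'}A_{ij''}$ is essential — it is precisely the device (via Definition \ref{good} clause (iii), $a_3 \in \textrm{cl}(a_1)$) that guarantees the degeneracy introduced by the specialization is ``one-dimensional and coherent'' across the array rather than accumulating or vanishing. Getting Axioms (7), (8), (9) to apply requires verifying their strong-goodness and rank-$\le 1$ side conditions at each inductive step, and the directed system $S$ in Axiom (9) has to be chosen compatibly with the rectangle structure; assembling these is routine in spirit but delicate in the indexing, and I would organize it as a single induction on $m+n$ with the base case $m=n=2$ handled directly by Lemma \ref{finarray} and Lemma \ref{dimthm}.
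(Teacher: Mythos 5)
Your plan correctly identifies the target (show every $m\times n$ rectangle of $a$ has dimension exactly $2m+n-2$ over $b$) and correctly flags the two hypotheses — the strongly good triples and the $2$-dimensional $\textrm{dcl}$-intersection — as the essential drivers, with Axiom (7) as the working tool. But the proposal has a genuine gap at the technical heart of the argument, and it also reaches for the wrong machinery.

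First, the gap. The hard direction is the lower bound $\dim(a;m,n/b)\ge 2m+n-2$ (the upper bound is easy: $\dim(A_{ij})=2$ but $\dim(a_{ij})=1$, so the specialization drops by at least one on every rectangle). To get the lower bound you need a concrete bound on the ``column step'': for any set $*$ of $i$-indices and any $j\ge 3$, $\dim(a_{*,j}/a_{*,<j})\le 1$. You gesture at this (``each new column after 2 adds exactly 1''), but you never say how to prove it, and this is precisely where all the work is. The paper establishes a pivotal claim about $2\times 3$ subarrays: if $(A_{ij}:i=1,2,\ j=1,2,3)\to(c_{ij})$ is a rank-$1$ specialization, then $\dim(c_{13}c_{23}/c_{11}c_{21}c_{12}c_{22})<2$. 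Its proof uses Lemma \ref{indiscpermu} to permute columns of $A$ so as to produce two specializations $A_{1,*}A_{2,*}\to c_{1,*}c_{2,*}$ and $A_{1,*}A_{2,*}\to c_{1,*}c_{0,*}$ that agree on a strongly good rank-$1$ base $A_{1,*}\to c_{1,*}$, amalgamates them via Axiom (7) to obtain elements $A_{0,*}$, and then uses the $\textrm{dcl}$-intersection hypothesis in a delicate dimension count to show $(A_{ij}:i=0,2)\to(c_{ij}:i=0,2)$ would have to be an isomorphism, a contradiction. Without this claim (or a substitute) you cannot control the column step, and your dimension bookkeeping (``overlap is $n-2$'', ``$2n-(n-2)\cdot 0$'') does not resolve into an actual argument.

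Second, the wrong tools. Lemma \ref{finarray}, Lemma \ref{dimthm}, Axiom (9), and Axiom (8) are all about \emph{producing} intermediate specializations; here you are \emph{given} the specialization $A\to a$ and need only bound dimensions. The paper's proof uses none of them, only Axiom (7) and Lemma \ref{indiscpermu}. Your invocation of Lemma \ref{finarray} and Axiom (9) ``to pin down that the drop cannot be spread thinner than one sub-rectangle'' is not a step that exists: what actually pins it down is the pair of monotonicity claims (each new row adds at most $2$, each new column beyond the second adds at most $1$) combined with the hypotheses $\dim(a;1,k/b)=k$ and rank $1$ on the full $M\times N$ array, via a contradiction argument — if some rectangle were strictly below $2m+n-2$, these monotonicities propagate the shortfall to the full array, contradicting rank $1$. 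That propagation, not a construction lemma, is what forces the localization you want.

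So: the proposal is not a valid proof as written. You would need to isolate and prove the $2\times 3$ claim (using Axiom (7), Lemma \ref{indiscpermu}, the strongly good hypothesis, and the $\textrm{dcl}$-intersection hypothesis) as the central lemma, then derive the row/column step bounds from it, and finish with the propagation-contradiction argument rather than an induction on $m+n$.
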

 
\begin{proof}
To simplify notation, we assume $b=\emptyset$.
All the arguments are similar in the general case.

We first note that any $m \times n$ -rectangle from $a$ has dimension at most $2m+n-2$.
Otherwise the specialization $A \to a$ would be an isomorphism on some rectangle and hence on each of its elements. But for each pair $i,j$, $dim(A_{ij})=2$, whereas $dim(a_{ij})=1$.

We will now prove that an $m \times n$ -rectangle actually has dimension at least $2m+n-2$.
We do this as a series of auxiliary claims.

\begin{claim}\label{permutod}
Let $(c_{ij})$ be a $2 \times 3$ -subarray of $a$.  
Assume $(A_{ij} : i=1,2, j=1,2,3) \to (c_{ij} : i=1,2, j=1,2,3)$ is a rank-$1$ specialization.
Then, \mbox{$dim(c_{13} c_{23} /c_{11} c_{21}  c_{12} c_{22})<2$.}
\end{claim}

\begin{proof}
Suppose $dim(c_{13} c_{23} /c_{11} c_{21}  c_{12} c_{22})=2$.
By the type of the array $A$, we have $dim(A; 2,3)=6$ and since we have a rank-$1$ specialization, $dim(c; 2,3)=5$.  Thus, $dim(c_{11} c_{21}  c_{12} c_{22})=3$.
Since we assumed   $dim(a;1,k/b)=k$, we have $dim(c_{21} c_{22})=2$, and thus either $dim(c_{11} c_{21} c_{22})=3$ or $dim(c_{12} c_{21} c_{22})=3$.
In the first case, we let $c_{01}$, $c_{02}$, $c_{03}$ be new elements such that
\begin{displaymath}
Lt(c_{01} c_{02} c_{03} c_{11} c_{12} c_{13}/\emptyset)=Lt(c_{23} c_{21} c_{22} c_{13} c_{11} c_{12}/ \emptyset)
\end{displaymath}
and
\begin{displaymath}
c_{01} c_{02} c_{03} \downarrow_{c_{11} c_{12} c_{13}} c_{21} c_{22} c_{23}
\end{displaymath}
(In the case that  $dim(c_{12} c_{21} c_{22})=3$, we choose the elements $c_{01}$, $c_{02}$, $c_{03}$ so that $Lt(c_{01} c_{02} c_{03} c_{11} c_{12} c_{13}/\emptyset)=Lt(c_{23} c_{21} c_{22} c_{13} c_{12} c_{11}/\emptyset)$, otherwise the proof is similar as the case we are handling here.)
 
Now, we have specializations 
\begin{eqnarray}\label{1}
A_{11} A_{12} A_{13} A_{21} A_{22} A_{23} \to c_{11} c_{12} c_{13} c_{21} c_{22} c_{23}
\end{eqnarray}
and 
$$A_{13} A_{11} A_{12} A_{23} A_{21} A_{22} \to c_{11} c_{12} c_{13} c_{01} c_{02} c_{03}.$$
By Lemma \ref{indiscpermu},
$t(A_{11} A_{12} A_{13} A_{21} A_{22} A_{23} /\emptyset)=t(A_{13} A_{11} A_{12} A_{23} A_{21} A_{22}/\emptyset)$
and hence $A_{11} A_{12} A_{13} A_{21} A_{22} A_{23} \to A_{13} A_{11} A_{12} A_{23} A_{21} A_{22}$.
So
\begin{eqnarray}\label{2} 
A_{11} A_{12} A_{13} A_{21} A_{22} A_{23} \to c_{11} c_{12} c_{13} c_{01} c_{02} c_{03}.
\end{eqnarray}
As $A_{13} A_{11} A_{12} \to c_{13} c_{11} c_{12}$ is a strongly good specialization of rank $1$, we see, by applying Axiom (7) to the specializations (\ref{1}) and (\ref{2}), that there exist
$A_{01}$, $A_{02}$, and $A_{03}$ with
\begin{eqnarray}\label{tyyppihyyppi}
t(A_{01} A_{02} A_{03}/ A_{11} A_{12} A_{13}) = t(A_{21} A_{22} A_{23}/A_{11} A_{12} A_{13})
\end{eqnarray}
and
\begin{displaymath}
A_{01} A_{02} A_{03} \downarrow_{A_{11} A_{12} A_{13}} A_{23} A_{21} A_{22}
\end{displaymath}
such that $(A_{ij} : i=0,1,2, j=1,2,3) \to (c_{ij} : i=0,1,2, j=1,2,3)$, and in particular, $(A_{ij} : i=0,2, j=1,2, 3) \to (c_{ij} : i=0,2, j=1,2,3)$.
We will prove that this specialization is an isomorphism and get a contradiction (as $dim(A_{21} A_{22})=3$ but $dim(c_{21} c_{22})=2$).

We show that $dim((A_{ij} : i=0,2, j=1,2, 3))=6$.
Now $dim(A_{01} A_{02} A_{03} / A_{11} A_{12} A_{13})=dim(A_{21} A_{22} A_{23} / A_{11} A_{12} A_{13})=2$.
Denote $X= \textrm{dcl}(A_{21} A_{22} A_{23}) \cap \textrm{dcl}(A_{11} A_{12} A_{13})$.
By our assumptions and the type of the array $A$, we have $dim(A_{21} A_{22} A_{23} /X)=2$, and since $X \subseteq \textrm{dcl}(A_{11} A_{12} A_{13})$, (\ref{tyyppihyyppi}) implies that also
$dim(A_{01} A_{02} A_{03}/X)=2$.
Thus,
\begin{displaymath}
A_{01} A_{02} A_{03} \downarrow_{X} A_{11} A_{12} A_{13} A_{21} A_{22} A_{23},
\end{displaymath}
and  thus $dim(A_{01} A_{02} A_{03}/A_{21} A_{22} A_{23})=2$.
Hence, \mbox{$dim((A_{ij} : i=0,2, j=1,2, 3))=6$.}

Next, we show that $dim((c_{ij} : i=0,2, j=1,2, 3))=6$.
The counterassumption gives us 
\begin{equation}\label{vastolseur}
c_{13} \downarrow c_{11} c_{12} c_{21} c_{22} c_{23}.
\end{equation}
Hence, 
\begin{equation}\label{perus}
c_{01} c_{02} c_{03} \downarrow_{c_{11} c_{12} } c_{21} c_{22} c_{23}.
\end{equation}
Similarly, 
\begin{equation}\label{uusiyht}
c_{23} \downarrow c_{11} c_{12}  c_{13} c_{21} c_{22}, 
\end{equation}
and thus 
\begin{displaymath}
1=dim(c_{23}/c_{13} c_{11})=dim(c_{01}/c_{11} c_{12})=dim(c_{01}/c_{21} c_{22} c_{23}),
\end{displaymath}
 so $dim(c_{01} c_{21} c_{22} c_{23})=4$.
 We will show that $dim(c_{02}c_{03}/c_{01} c_{21} c_{22} c_{23})=2$, which will give the contradiction.
 From (\ref{perus}) it follows that $c_{02} c_{03} \downarrow_{c_{11} c_{12} c_{01}} c_{21} c_{22} c_{23}$.
 By (\ref{uusiyht}) and the definition of $c_{01}$, we have
 $c_{01} \downarrow c_{11} c_{12} c_{02} c_{03}$, and in particular $c_{02} c_{03} \downarrow_{c_{11} c_{12}} c_{01}$.
Thus, by transitivity, 
\begin{equation}\label{perus2}
c_{02} c_{03} \downarrow_{c_{11} c_{12}} c_{01} c_{21} c_{22} c_{23}.
\end{equation}
From (\ref{vastolseur}) it follows that $c_{13} \downarrow c_{21} c_{22} c_{11}$, hence (by the choice of $c_{02}$ and $c_{03}$) $c_{11} \downarrow c_{02} c_{03} c_{12}$, and in particular 
\begin{displaymath}
c_{11} \downarrow_{c_{12}} c_{02} c_{03}.
\end{displaymath}
From this and (\ref{perus2}) we get
\begin{equation}\label{perus3}
c_{02} c_{03} \downarrow_{c_{12}} c_{01} c_{21} c_{22} c_{23}.
\end{equation}
But $dim(c_{12} c_{02} c_{03})=dim(c_{11} c_{21} c_{22})$, and in the beginning of the proof of the claim, we assumed this rank is $3$, so $dim(c_{02} c_{03}/c_{12})=2$.
Thus, $c_{02} c_{03} \downarrow c_{12}$, and from this and (\ref{perus3}) we finally get
\begin{displaymath} 
dim(c_{02} c_{03}/c_{01} c_{21} c_{22} c_{23})=2,
\end{displaymath}
as wanted. 
\end{proof}

\begin{claim}\label{indices1}
For any set $*$ of $j$-indices and $i \ge 2$, $dim(a_{i,*}/a_{<i,*}) \le 2$.
\end{claim}

\begin{proof}
If the claim fails, we may take $*$ to be a set of three indices such that $dim(a_{i,*}/a_{<i,*}) =3$.
Since $dim(a; 1, 3)=3$, we would get a $2 \times 3$-rectangle of rank $6$, which is against the observation that any $m \times n$ -rectangle has dimension at most $2m+n-2$.
\end{proof}

\begin{claim}\label{indices2}
For any set $*$ of $i$-indices and $j \ge 3$, $dim(a_{*,j}/a_{*,<j}) \le 1$.
\end{claim}

\begin{proof}
Suppose not. We may take $*$ to consist of two indices, say $*=\{1,2\}$, and then $dim(a_{1j}a_{2j}/a_{1,<j}, a_{2, <j})=2$ for some $j$. 
We first observe that there cannot be two distinct values of $j$ for which the claim fails.
Indeed, suppose  $j_1<j_2$ are the two smallest failing indices. 
Then, $dim(a_{1j_2} a_{2 j_2}/a_{1j_1}, a_{2 j_1}, a_{11}, a_{21})=2$.
Denote $r=dim(a_{1j_2}, a_{2 j_2}, a_{1j_1}, a_{2 j_1}, a_{11}, a_{21})$.
As the claim fails for $j_1$ also, we have $dim(a_{1j_1}, a_{2 j_1}/ a_{11}, a_{21})=2$
Now, $dim(a_{11} a_{21})=1$ or  $dim(a_{11} a_{21})=2$.
In the first case, $r=5$, so  $(A_{ij} : i=1,2, j=1, j_1, j_2) \to (a_{ij} : i=1,2, j=1, j_1, j_2)$ is a rank-$1$ specialization and we are contradicting Claim \ref{permutod}.
In the second case, $r=6$ and $(A_{ij} : i=1,2, j=1, j_1, j_2) \to (a_{ij} : i=1,2, j=1, j_1, j_2)$ is an isomorphism which is also a contradiction ($dim(A_{11} A_{21})=4$ but $dim(a_{11} a_{21}) \le 2)$.

So we have at most one failing index $j$. 
Denote it by $j_0$.
We note first that  $dim(a_{1j} a_{2j}: j \le N) \ge 2+N$, since otherwise we could prove by induction on $m$ (using Claim \ref{indices1}) that $dim(a_{ij} : j \le N, i \le m) < 2m+N-2$. 
Setting $m=M$, this would contradict the assumption that $A \to a$ is a rank-$1$ specialization.

Thus,  
\begin{displaymath}
2+N \le dim(a_{1j}a_{2j} : j \le N) \le dim(a_{1j}a_{2j} : j \le j_0)+(N-j_0),
\end{displaymath}
so $dim(a_{1j}a_{2j} : j \le j_0) \ge 2+j_0$.
Hence, as $dim(a_{1j_0}a_{2j_0} /a_{1,<j_0}a_{2,<j_0})=2$, we have $dim(a_{1j}a_{2j} : j < j_0) \ge j_0$.
From this and the fact that the claim holds below $j_0$ it follows that $dim(a_{11}a_{21})=2$ and that for some $j'<j_0$, $dim(a_{11}a_{21}a_{1j'}a_{2j'})=3$.
Then, $dim(a_{11}a_{21} a_{1j'}a_{2j'} a_{1j_0}a_{2j_0})=5$, and as above, we contradict Claim \ref{permutod}.
\end{proof}
 
We now show that any $m \times n$ rectangle from $a$ has dimension at least $2m+n-2$ which will prove the lemma.
Suppose there are $m$ and $n$ so that for some $m \times n$ rectangle $C$ from $a$, $dim(C)<2m+n-2$.
Using claims \ref{indices1} and \ref{indices2}, we see that the inequality remains strict for any rectangle from $a$ that contains $C$.
Thus, $dim(a; M,N)<2M+N-2$.
But this contradicts the assumption that $dim(A; M,N)=2M+N-1$ and the specialization $A \to a$ has rank $1$.
\end{proof}

\subsection{Families of plane curves}\label{families}
 
In the proof of our main theorem, we will find a field configuration in the form of an uncountable indiscernible array of type $2m+n-2$, and the existence of an algebraically closed field will follow from Lemma \ref{groupexists}.
When constructing the array, families of plane curves will play a crucial role.
We give a definition analoguous to the one in \cite{HrZi}.
   
\begin{definition}\label{goodfor}
Let $C \subset \M^{n+m}$ be an irreducible set.
We say an element $a \in \M^n$ is \emph{good} for $C$ if there is some $b \in \M^m$ so that $(a,b)$ is a generic element of $C$.
\end{definition}

\begin{definition}
Let $\M$ be a Zariski-like structure, and let $E \subseteq \M^n$ and $C \subseteq \M^2 \times E$ be irreducible sets.
For each $e \in E$, denote $C(e)=\{(x,y) \in \M^2 \, | \, (x,y,e) \in C\}$.
Suppose now $e \in E$ is a generic point.
If $e$ is good for $C$ and the generic point of $C(e)$ has dimension $1$ over $e$, then we say that $C(e)$ is a \emph{plane curve}.
We say $C$ is a \emph{family of plane curves} parametrized by $E$.  

We say that $\alpha$ is the \emph{canonical parameter} of the plane curve $C(e)$ if $\alpha=\textrm{Cb}(x,y/e)$ for a generic element $(x,y) \in C(e)$.
We define the $\emph{rank}$ of the family to be the dimension of $\textrm{Cb}(x,y/e)$ over $\emptyset$, where $e \in E$ is generic, and $(x,y)$ is a generic point of $C(e)$.
\end{definition}
 
\begin{remark}\label{relevant}
We note that if $C \subset \M^2 \times E$ is a family of plane curves of rank at least $2$, then for a generic $e \in E$ and a generic point $(x,y) \in C(e)$, it holds that $x \notin \textrm{bcl}(e)$ and $y \notin \textrm{bcl}(e)$.
Indeed, if for instance $x \in \textrm{bcl}(e)$, then $y \da ex$ so we can choose $Cb(x,y/e)=Cb(x/e)$, and this canonical parameter has rank at most $1$.
\end{remark}

Our main theorem concerns Zariski-like structures with a non locally modular canonical pregeometry.
The non local modularity comes to play in finding a family of plane curves that can be parametrized with a tuple of dimension $2$.
This family will eventually be used to construct a suitable indiscernible array, following the example of \cite{HrZi}.

The following lemma states the existence of a suitable family and that there is an analogue to
the results of the computations in the beginning of the proof of Lemma 6.10 \cite{HrZi}.

\begin{lemma}\label{loytyyperhe} 
Let $\M$ be a quasiminimal pregeometry structure such that bcl is non locally modular.
Then, there exists a family of plane curves $C$, parametrized by a set $E$, and a tuple $d \in \M$ such that for a generic $e \in E(d)$, it holds that $dim(e/d)=2$ and $e$ is interbounded over $d$ with the canonical parameter $\alpha=Cb(x,y/d,e)$ of the family.

Moreover, if $(x,y)$ and $(x',y')$ are generic points of $C(d,e)$ such that $t(xy/de)=t(x'y'/de)$ and $xy \da_{de} x'y'$, then $e \in \textrm{bcl}(d,x,y,x',y')$ and $dim(xyx'y'/d)=4$.
\end{lemma}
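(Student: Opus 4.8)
The plan is to extract the desired family of plane curves from the failure of local modularity, using the characterization of non-modularity in terms of canonical bases. First I would recall that bcl being non locally modular means there are finite tuples $u, v$ and a set $w$ such that $dim(uw/\emptyset) = dim(vw/\emptyset)$ is large enough yet $w$ is not independent from the intersection of the relevant closures in the expected way; more precisely, one obtains tuples $x, y$ and a parameter $e$ with $x, y \notin \textrm{bcl}(e)$, $dim(xy) = 2$, $dim(xy/e) = 1$, and such that $e' = Cb(xy/e)$ has $dim(e'/\emptyset) \geq 2$. The set $E$ will be the locus of (a suitable representative of) $e'$, and $C$ the locus of $(x,y,e')$; by Axioms (2) and (4) of a Zariski-like structure these loci exist and are irreducible, and by construction the generic fibre $C(e')$ is a plane curve of dimension $1$ over $e'$ with $x, y \notin \textrm{bcl}(e')$, so $C$ is a relevant family. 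Taking $d$ to collect whatever finite parameters are needed to witness non-modularity (these can be absorbed into the base), I would arrange that over $d$ the canonical parameter $\alpha = Cb(x,y/d,e)$ satisfies $dim(\alpha/d) = 2$, and since $\alpha \in \textrm{bcl}(d,e)$ and, by stationarity and the canonical base properties recalled in Section 2 ($Cb(x,y/d,e) \in \textrm{bcl}(d,e)$, $xy \da_\alpha de$), one also gets $e \in \textrm{bcl}(d,\alpha)$ by choosing $e$ minimally — hence $e$ and $\alpha$ are interbounded over $d$ and $dim(e/d) = 2$.

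For the "moreover" part, suppose $(x,y)$ and $(x',y')$ are generic points of $C(d,e)$ with $t(xy/de) = t(x'y'/de)$ and $xy \da_{de} x'y'$. I would first compute $dim(xyx'y'/d)$: since $dim(xy/de) = 1$ and $dim(e/d) = 2$ we get $dim(xye/d) = 3$, hence $dim(xy/d) \leq 3$; but $x, y \notin \textrm{bcl}(de) \supseteq \textrm{bcl}(e)$ together with $dim(xy) = 2$ forces $dim(xy/d) = 2$ (the extra parameter $d$ cannot raise dimension, and the generic fibre condition pins it down), so $dim(e/d,x,y) = 1$. By independence $dim(x'y'/d,x,y) = dim(x'y'/d) - 0$ adjusted by the shared information: precisely $xy \da_{de} x'y'$ gives $dim(x'y'/d,x,y,e) = dim(x'y'/d,e) = 1$, and then $dim(x'y'/d,x,y) \leq 2$ with equality unless $e$ becomes bounded over $d,x,y,x',y'$. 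Counting: $dim(xyx'y'/d) = dim(xy/d) + dim(x'y'/d,x,y) = 2 + dim(x'y'/d,x,y)$, and I would show $dim(x'y'/d,x,y) = 2$, which simultaneously yields $dim(xyx'y'/d) = 4$ and, via the submodularity inequality applied to $e$ over the two fibres, $e \in \textrm{bcl}(d,x,y,x',y')$ — indeed $dim(e/d,x,y,x',y') \leq dim(e/d,x,y) + dim(e/d,x',y') - dim(e/d) = 1 + 1 - 2 = 0$.

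The main obstacle I anticipate is the very first step: producing, from the bare statement "bcl is non locally modular", the concrete configuration $(x,y,e,d)$ with all the stated numerical properties — in particular arranging simultaneously that $dim(xy) = 2$, that $x,y \notin \textrm{bcl}(e)$, and that the canonical base has dimension exactly $2$ rather than merely $\geq 2$. Getting dimension exactly $2$ typically requires a "descent" argument: start from some family of large rank witnessing non-modularity and cut it down by intersecting with generic translates or by specializing parameters until the rank of the canonical base drops to $2$, checking at each stage that relevance and the fibre-dimension conditions are preserved. This is where I expect to lean on the standard one-dimensional-family arguments (as in the Zariski-geometry literature, e.g. the treatment around Lemma 6.11 of \cite{HrZi}), suitably transcribed into the quasiminimal setting using the independence calculus and canonical bases from Section 2; the rest of the proof is bookkeeping with the dimension/submodularity inequalities.
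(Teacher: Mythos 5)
Your high-level outline is reasonable, but the two places you yourself flag as hard are exactly where the argument has genuine gaps, and one tool you rely on is simply false. For the construction: you propose a ``descent'' (cutting the rank of the canonical base down to $2$ by intersecting or specializing). The paper does something different and shorter. By Lemma 4.13 of \cite{lisuriart}, non local modularity already produces a relevant family of plane curves whose canonical parameter $\alpha$ satisfies $\dim(\alpha)=r\ge 2$ for some $r$; the family is then left untouched, and instead parameters are introduced to bring the \emph{relative} dimension of $\alpha$ down to $2$. Concretely, one takes independent generic realizations $(x_i,y_i)$, $1\le i\le r$, of $t(x,y/\alpha)$, and sets $d=(x_1,\dots,x_r,y_1,\dots,y_{r-2})$ and $e=(y_{r-1},y_r)$. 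Each full pair $(x_i,y_i)$ drops $\dim\alpha$ by exactly one (canonical-base property), while the lone coordinates $x_{r-1},x_r$ are independent from $\alpha$ together with the rest, so $\dim(\alpha/d)=2$; and since $\alpha\in\textrm{bcl}(x_1,y_1,\dots,x_r,y_r)=\textrm{bcl}(d,e)$ and $e$ has two coordinates, $\dim(e/d)=2$ and $\alpha,e$ are interbounded over $d$. Then $C$ is the locus of $(x,y,d,e)$ and $E$ the locus of $(d,e)$. There is no rank descent; the missing step in your sketch is precisely this choice of $d$ and $e$ from a Morley sequence.

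For the ``moreover'' part, the inequality you invoke, $\dim(e/AB)\le \dim(e/A)+\dim(e/B)-\dim(e/A\cap B)$, is not valid in a pregeometry. Take $\M=\mathbb{Q}^3$, $e=e_1$, $A=\{e_2,e_1+e_2\}$, $B=\{e_3,e_1+e_3\}$: then $\dim(e/A)=\dim(e/B)=0$ and $\dim(e/\emptyset)=1$, so the right-hand side is $-1<0=\dim(e/AB)$. (What you need amounts to supermodularity of $X\mapsto\dim(e/X)$, which is a modularity-type hypothesis — the opposite of what is assumed.) The paper's route, which you should use instead, is the same canonical-base calculation as in the first half: over $d$, $\alpha$ is interbounded with $e$ and is the canonical base of $Lt(xy/e)$ and of $Lt(x'y'/e)$; since $xy\downarrow_{de}x'y'$, the two pairs form a length-two Morley sequence, so $\dim(\alpha/d,xy)=1$ and $\dim(\alpha/d,xy,x'y')=0$, giving $\alpha$, hence $e$, in $\textrm{bcl}(d,xy,x'y')$; then $\dim(xyx'y'/d)=\dim(exyx'y'/d)=\dim(e/d)+\dim(xy/de)+\dim(x'y'/de)=2+1+1=4$. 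Note also that your interim step ``$x,y\notin\textrm{bcl}(de)$ together with $\dim(xy)=2$ forces $\dim(xy/d)=2$'' is not a valid inference from what you state (it does not rule out $y\in\textrm{bcl}(d,x)$); it holds here only because of the independence built into the construction of $d$ via $(x,y)\downarrow_\alpha d$ and $\dim(\alpha/d,xy)=1$.
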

 
\begin{proof}
Since $\M$ is not locally modular, there exists, by Lemma 4.13 in \cite{lisuriart},   
a  family of plane curves that has rank $r \ge 2$.
Let $\alpha$ be the canonical parameter of a generic curve in this family.
Then, $dim(\alpha/\emptyset)=r$.
Let $(x,y)$ be a generic point on this curve, so $\alpha=Cb(x,y/\alpha)$.
  
Let $x_1, \ldots, x_r, y_1, \ldots, y_r$ be such that $t(x_i,y_i/\alpha)=t(x,y/\alpha)$ for each $i$ and the sequence $x, x_1, \ldots, x_r$ is independent over $\alpha$.
Denote  $d=(x_1, \ldots, x_r,y_1, \ldots, y_{r-2})$ and $e=(y_{r-1}, y_r)$.
We will show that $\alpha=Cb(x,y/d,e)$, that $dim(d,e)=r$, and that $\alpha$ is interbounded with $e$ over $d$. 
Then, taking $C$ to be the locus of $(x,y,d,e)$ and $E$ to be the locus of $(d,e)$, we get a family of plane curves $C$ parametrized by $E$ that is as wanted.

We show first that $\alpha \in \textrm{bcl}(d,e).$
Since $xy \da_\alpha de$, it will then follow that $\alpha=Cb(x,y/de)$.
For $k \le r$, we have $\alpha=Cb(x_k,y_k/\alpha)$.
Hence, if  
\begin{eqnarray}\label{cbalpha}
\alpha \da_{x_1, y_1, \ldots, x_{k-1}, y_{k-1}} x_k y_k,
\end{eqnarray}
it follows by symmetry and the properties of canonical bases that 
$$\alpha=Cb(x_k,y_k/\alpha, x_1, y_1, \ldots, x_{k-1}, y_{k-1}),$$
and thus, applying again symmetry to (\ref{cbalpha}), that $\alpha \in \textrm{bcl}(x_1, y_1, \ldots, x_{k-1}, y_{k-1})$.
Since the dimension of $\alpha$ can drop at most $r$ times, $\alpha \in \textrm{bcl}(d,e)$.

The sequence $(x_i, y_i)_{1 \le i \le r}$ is independent over $\alpha$, and for each $i$, we have $dim(x_i, y_i/\alpha)=1$.
We get
$$dim(x_1, y_1, \ldots, x_r, y_r)=dim(x_1, y_1, \ldots, x_r,y_r, \alpha)=2r.$$
We also see that 
$dim(\alpha/x_1, y_1, \ldots, x_k, y_k)=r-k$, and thus $\alpha$ is interbounded with $e$ over $d$.

The claim after ``moreover" is proved with essentially the same calculations that are presented above.
\end{proof}

\subsection{Coding elements of $\M^{eq}$}\label{cooding} 
 
By Theorem 4.19 in \cite{lisuriart} (4.22 in \cite{lisuri}), there is a group interpretable in a Zariski-like structure with non-trivial pregeometry.
We will use the elements of this group to construct the indiscernible arrays needed to prove our main theorem.
However, the group will be in $\mathbb{M}^{eq}$, and we have to construct the arrays in $\mathbb{M}$.
Thus, we present a way to code the elements of $\mathbb{M}^{eq}$. 

\begin{definition}
Let $\alpha \in \M^{eq}$ and $c \in \M$.
We say that a tuple $a \in \M$ is a \emph{weak code} for $\alpha$ over $c$ if $\alpha \in \textrm{dcl}(c, a)$ and
$a$ is interbounded with $\alpha$ over $c$.
\end{definition}
    
\begin{lemma}\label{koodaus}
Let $S$ be a set whose elements have the same Galois type, and suppose that type is stationary. 
Then, there is a tuple $c \in \M$ such that the elements of $S$ have distinct weak codes over $c$.
Moreover, there is a definable function $F$ such that if $\alpha \in S$ and $a$ is a weak code for $\alpha$ over
$c$ in this coding, then $\alpha=F(c,a)$.   
\end{lemma}

\begin{proof}
Let $\alpha \in S$.
Then, there exists a tuple $b \in \M$ and some definable function $F$ such that $\alpha=F(b)$.
Here, $F$ is a composition of function symbols of the form $F_E$ for some equivalence relations $E$ (remember that we use the notation $\M^{eq}$ for $(\M^{eq})^{eq}$, etc).
Suppose now that $b$ is chosen so that $dim(b/\alpha)$ is the least possible, and denote this dimension by $n$.
We may assume that $b=(b_1, \ldots, b_n, b_{n+1}, \ldots, b_m)$, where $dim(b_1, \ldots, b_n/\alpha)=n$,
and hence 
\begin{eqnarray}\label{bbcl}
b \in \textrm{bcl}(b_1, \ldots, b_n, \alpha).
\end{eqnarray} 
Choose now some tuple $c=(c_1, \ldots, c_n) \in \M$ such that $c \da_\emptyset S$ and $Lt(c/\emptyset)=Lt(b_1, \ldots, b_n/\emptyset)$.
By stationarity of Lascar types, $Lt(c_1, \ldots, c_n/\alpha)=Lt(b_1, \ldots, b_n/\alpha)$,
so there is some automorphism $f$ such that $f(\alpha)=\alpha$ and $f(b_1, \ldots, b_n)=c$.
Write $a=(f(b_{n+1}),...,f(b_m))$.
Then $a$ is a weak code for $\alpha$ over $c$.
Indeed, $\alpha=F(f(b))=F(c,a)$, so $\alpha \in \textrm{dcl}(c,a) \subseteq \textrm{bcl}(c,a)$, and from (\ref{bbcl}) it follows that $a \in \textrm{bcl}(\alpha,c)$.
 
If $\alpha' \in S$, then since $\alpha$ and $\alpha'$ realize the same stationary type,  
$t^g(\alpha/c)=t^g(\alpha'/c)$.
Thus, there is some $b'=(b_1', \ldots, b_n', b_{n+1}', \ldots, b_{m}') \in \M$ such that $\alpha'=F(b')$ and $t^g(b'/c)=t^g(b/c)$, and hence $Lt(b_1', \ldots, b_n'/\emptyset)=Lt(c/\emptyset)$. 
As above, we obtain a weak code $a'$ for $\alpha'$ over $c$.
If $a'=a$, then also $\alpha=F(c,a)=F(c,a')=\alpha'$, so distinct elements of $S$ have distinct weak codes.
\end{proof} 
 
\begin{remark}
Note that the coding provided by Lemma \ref{koodaus} is not unique.
Indeed, for each $\alpha \in S$, there may be several distinct elements $b$ such that $\alpha=F(b)$, and each of them gives a distinct weak code $a$. 
\end{remark}

\begin{remark}\label{koodaus2}
Since all the elements of $S$ have the same Galois type and that type is stationary, 
they also have the same Lascar type. 
Thus, we may choose the codes so that they have the same Lascar type.
Moreover, if $d$ is a tuple independent from $S$, then we can choose $c$ so that it is independent from $d$ (in the proof, just choose  $c$ so that $c \da Sd$). 
\end{remark}

The following technical lemma will be used in the proof of the main theorem.
It is an analogue for 6.6(6) in \cite{HrZi}.

\begin{lemma}\label{binterbound}
Let $(G,+)$ be a $1$-dimensional Galois definable group in $\M^{eq}$.
Let  $\alpha_i, \beta_i, \alpha_i'$, $i=1,2$, and $\beta^*$ be elements of $G$, generic over $c$, coded by the weak codes $a_i, b_i, a_i',b^*$, respectively (over $c$). 
Suppose $\alpha_1-\alpha_2=\beta_1-\beta_2$.
Let $\gamma_1, \gamma_2 \in G$ be such that $\gamma_1 \gamma_2 \da c a_1 a_2 b_1 b_2 b^*$ and $\gamma_1 \gamma_2 \da c a_1'a_2' b^*$, and suppose $g_i$, $h_i$, $g_i'$, $h_i'$, $i=1,2$  are weak codes (over $c$) for $\alpha_i+\gamma_i$, $\beta_i+\gamma_i$, $\alpha_i'+\gamma_i$, and $\beta^*+\gamma_i$, respectively. 
Suppose $dim(a_1,a_2, b_1/c)=3$ and 
$$(c,a_1, a_2, b_1, b_2, g_1, g_2, h_1,h_2) \to (c,a_1',a_2' ,b^*, b^*,g_1',g_2',h_1', h_2').$$

Then, $a_1'$ and $a_2'$ are interbounded over $c$.
\end{lemma}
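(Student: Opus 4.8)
The plan is to exploit the group equation $\alpha_1-\alpha_2=\beta_1-\beta_2$ together with the specialization hypothesis to force a dimension count on the $a_i'$ side. First I would observe that, because $(G,+)$ is $1$-dimensional with unique generics (it is a Galois definable group in $\M^{eq}$), the elements $\alpha_i+\gamma_i$, $\beta_i+\gamma_i$, $\beta^*+\gamma_i$ behave generically: for $\gamma_1,\gamma_2$ chosen independent from the relevant parameters, translation by $\gamma_i$ is an isomorphism of the relevant types over $c$, and the codes $g_i,h_i,g_i',h_i'$ inherit the same interbounded relationships over $c$ that $a_i,b_i,a_i',b^*$ have with the underlying group elements (Lemma \ref{koodaus}). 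The equation $\alpha_1-\alpha_2=\beta_1-\beta_2$ translates, after adding $\gamma_1$ and $\gamma_2$ respectively, into a relation linking $(\alpha_1+\gamma_1),(\alpha_2+\gamma_2),(\beta_1+\gamma_1),(\beta_2+\gamma_2)$; concretely $(\alpha_1+\gamma_1)-(\beta_1+\gamma_1)=\alpha_1-\beta_1=\alpha_2-\beta_2=(\alpha_2+\gamma_2)-(\beta_2+\gamma_2)$, so the same "difference" equation persists on the translated elements, hence $g_i,h_i$ satisfy a corresponding interbounded-ness: each $g_i$ is interbounded with $h_i$ over $c$ together with the common difference $\delta:=\alpha_1-\beta_1$, which itself lies in $\mathrm{bcl}(c,a_1,b_1)=\mathrm{bcl}(c,a_2,b_2)$.

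Next I would compute dimensions on the source side. From $dim(a_1,a_2,b_1/c)=3$ and the fact that $\alpha_1,\alpha_2,\beta_1$ are generic in the $1$-dimensional group $G$ with $\delta=\alpha_1-\beta_1$ determining $\beta_2$ from $\alpha_2$, one gets $dim(a_1,a_2,b_1,b_2/c)=3$ (adding $b_2$ costs nothing once $a_2$ and $\delta\in\mathrm{bcl}(c,a_1,b_1)$ are present). Adjoining the independent generics $\gamma_1,\gamma_2$ and then the codes $g_i,h_i$ (each interbounded over $c$ with a group element that is in the definable closure of the $a$'s, $b$'s and $\gamma$'s) the tuple $(c,a_i,b_i,g_i,h_i)_{i=1,2}$ has dimension $3+2=5$ over $c$ (the two $\gamma_i$ each contribute $1$). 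Now apply Remark \ref{specciremark} to the given specialization $(c,a_i,b_i,g_i,h_i)_{i=1,2}\to(c,a_i',b^*,g_i',h_i')$: either the Galois type is preserved — in which case the target also has dimension $5$ over $c$ and all the interbounded-ness relations transfer, forcing in particular that $\delta':=\alpha_1'-\beta^*$ equals $\alpha_2'-\beta^*$ modulo the induced relations, whence $\alpha_1'=\alpha_2'$ and $a_1',a_2'$ are interbounded over $c$ — or the dimension strictly drops. In the drop case the point is that the constraints $b_0=b_1$-type collapses (here $b^*$ is shared between both rows) already force $\alpha_1'-\beta^*$ and $\alpha_2'-\beta^*$ to be tied: since $g_i'$ is a specialization of $g_i$ which was interbounded over $c$ with $\alpha_i+\gamma_i$, and $h_i'$ (specialization of $h_i$, interbounded with $\beta^*+\gamma_i$) uses the \emph{same} $\beta^*$ for $i=1,2$, the difference $(\alpha_i'+\gamma_i)-(\beta^*+\gamma_i)=\alpha_i'-\beta^*$ is coded by something in $\mathrm{bcl}(c,g_i',h_i')$, and the specialization of the equation $\alpha_1-\beta_1=\alpha_2-\beta_2$ gives $\alpha_1'-\beta^*=\alpha_2'-\beta^*$, i.e. $\alpha_1'=\alpha_2'$, hence $a_1',a_2'$ interbounded over $c$ (they code the same generic element of $G$ over $c$, and by Lemma \ref{koodaus} distinct elements have distinct codes while a fixed element's codes are pairwise interbounded over $c$).

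The step I expect to be the main obstacle is making the transfer of the \emph{algebraic} relation $\alpha_1-\alpha_2=\beta_1-\beta_2$ through the specialization fully rigorous: a specialization preserves membership in the irreducible sets of $\mathcal{C}$, so one must first arrange that the graph of the group operation restricted to the relevant generic locus (or rather a suitable locus witnessing $\alpha_1-\beta_1=\alpha_2-\beta_2$) is (coded by) a set in $\mathcal C$, or at least Galois definable over $c$ in a way compatible with the coding of Lemma \ref{koodaus}, so that the relation is among the data that a specialization must respect. This is where one leans on the hypotheses $\gamma_1\gamma_2\da c\,a_ib_ib^*$ and $\gamma_1\gamma_2\da c\,a_i'b^*$ (to keep everything generic, so the loci are the "largest" ones and the coded relations are exactly the generic ones) and on $dim(a_1,a_2,b_1/c)=3$ (to pin down that the source configuration is in the expected generic position, so that by Remark \ref{specciremark} a non-isomorphism specialization must strictly lower dimension). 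Once the relation is known to be respected, the conclusion that $\alpha_1'$ and $\alpha_2'$ coincide — and hence $a_1'$, $a_2'$ are interbounded over $c$ — is immediate from the group law and the coding properties.
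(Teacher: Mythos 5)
Your Case 1 (dimension preserved) is essentially the paper's first case and is fine: if $\dim(g_1',g_2',h_1',h_2'/c)=\dim(g_1,g_2,h_1,h_2/c)$, then by Remark \ref{specciremark} the specialization preserves the Galois type, so the translated group relation $(\alpha_1+\gamma_1)-(\alpha_2+\gamma_2)=(\beta_1+\gamma_1)-(\beta_2+\gamma_2)$ transfers and forces $\alpha_1'=\alpha_2'$, hence the codes are interbounded over $c$.

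The gap is in your drop case. You write ``the specialization of the equation $\alpha_1-\beta_1=\alpha_2-\beta_2$ gives $\alpha_1'-\beta^*=\alpha_2'-\beta^*$, i.e.\ $\alpha_1'=\alpha_2'$,'' but you have no right to that: once the dimension drops, the specialization is no longer a Galois-type isomorphism, and a specialization only preserves membership in the sets of $\mathcal{C}$, not arbitrary group equations on $\M^{eq}$-elements recovered through coding. You flag this yourself as ``the main obstacle'' and propose to resolve it by arranging that the locus of the group equation be (coded by) a set in $\mathcal{C}$ or Galois definable over $c$ -- but you never carry that out, and it is not how the difficulty is resolved in the paper. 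The paper's second case does not try to transfer the group equation at all. Instead it aims at the weaker (but sufficient) conclusion $\alpha_1'-\alpha_2'\in\textrm{bcl}(c)$: assuming $\dim(g_1',g_2',h_1',h_2'/c)<3$ and, toward a contradiction, $\alpha_1'-\alpha_2'\notin\textrm{bcl}(c)$, it uses the tautological identity $\alpha_1'-\alpha_2'=(\alpha_1'+\gamma_1)-(\alpha_2'+\gamma_2)-(\beta^*+\gamma_1)+(\beta^*+\gamma_2)$ (which holds regardless of any specialization) to place $\alpha_1'-\alpha_2'$ in $\textrm{bcl}$ of the set $A=\{\alpha_1'+\gamma_1,\alpha_2'+\gamma_2,\beta^*+\gamma_1,\beta^*+\gamma_2\}$, then deduces $\alpha_1'-\alpha_2'\in\textrm{bcl}(c,\beta^*+\gamma_1,\beta^*+\gamma_2)$ from the dimension drop, and derives a contradiction from the independence hypotheses $\gamma_1\gamma_2\da c\,a_i'b^*$ by an exchange argument. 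Note also that in this case the conclusion is not $\alpha_1'=\alpha_2'$ as you claim, but only interboundedness of $\alpha_1'$ and $\alpha_2'$ over $c$, which is what is actually needed. So you should drop the attempt to push the group identity through the degenerate specialization and replace that part with the dimension-and-exchange argument.
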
 
 
\begin{proof}
Since we have assumed that there are no non-classical groups in our monster model, we have
\begin{eqnarray}\label{summa}
(\alpha_1+\gamma_1)-(\alpha_2+\gamma_2)=(\beta_1+\gamma_1)-(\beta_2+\gamma_2),
\end{eqnarray}
and that $dim(g_1, g_2, h_1, h_2/c)=3$.
We divide the proof into two cases, depending whether $dim(g_1',g_2',h_2',h_2'/c)=3$ or not.

Suppose first the equality holds.
Then, $t(g_1, g_2, h_1, h_2/c)=t(g_1',g_2',h_1',h_2'/c)$, so (\ref{summa}) implies
$$(\alpha_1'+\gamma_1)-(\alpha_2'+\gamma_2)=(\beta^*+\gamma_1)-(\beta^*+\gamma_2),$$
in particular $\alpha_1'=\alpha_2'$.
Since $a_1'$ and $a_2'$ are (possibly distinct) weak codes for $\alpha_1'$, they must be interbounded over $c$.

Suppose now $dim(g_1',g_2',h_1',h_2'/c)<3$.
We show that $\alpha_1'-\alpha_2' \in \textrm{bcl}(c)$ and the claim will follow.
Assume towards a contradiction that this is not the case.
Denote 
$$A=\{\alpha_1'+\gamma_1,\alpha_2'+\gamma_2,\beta^*+\gamma_1,\beta^*+\gamma_2\}.$$ 
Since it holds that
$$\alpha_1'-\alpha_2'=(\alpha_1'+\gamma_1)-(\alpha_2'+\gamma_2)-(\beta^*+\gamma_1)+(\beta^*+\gamma_2),$$
 we have $\alpha_1'-\alpha_2' \in \textrm{bcl}(A)$.
If $dim(\alpha_1'-\alpha_2', \beta^*+\gamma_1, \beta^*+\gamma_2/c)=3$, then $dim(A/c)=3$, which contradicts the assumption that $dim(g_1',g_2',h_1',h_2'/c)<3$.
Since $\beta^*+\gamma_1$ is independent from $\beta^*+\gamma_2$, we thus have 
$\alpha_1'-\alpha_2' \in \textrm{bcl}(\beta^*+\gamma_1, \beta^*+\gamma_2,c)$.

We have $\alpha_1'-\alpha_2' \notin \textrm{bcl}(c, \beta^*)$ since otherwise, applying exchange, we would get $\beta^* \in \textrm{bcl}(\beta^*+\gamma_1, \beta^*+\gamma_2,c)$ which is impossible since $dim(\beta^*, \gamma_1, \gamma_2/c)=3$.
But since $\alpha_1'-\alpha_2' \da_{\beta^* c} \gamma_1 \gamma_2$, this implies $\alpha_1'-\alpha_2' \notin \textrm{bcl}(c, \beta^*, \gamma_1, \gamma_2)$, a contradiction.
This proves the lemma.
\end{proof} 

\subsection{The main theorem}\label{prooof}

When proving our main theorem,
we follow the outline of the proof of Lemma 6.11 in \cite{HrZi}.
From now on, we will suppose that $\M$ is a non locally modular Zariski-like structure and that $\M$ does not interpret a non-classical group.
By Theorem 4.19 in \cite{lisuriart} (see also \cite{lisuri}, 4.22), there is a Galois definable, $1$-dimensional group $G$ in $\M^{eq}$.
This group plays a crucial role when proving that $\M$ interprets a field, and we will eventually use Lemma \ref{koodaus} to (weakly) code some of the generic elements in $G$.
When doing so, we will always suppose that if $\alpha \in G$ is generic, $a=(a_1, \ldots, a_m)$ is a weak code for $\alpha$, and $c$ is the parameter tuple from Lemma \ref{koodaus}, then $dim(a_1/c)=1$.
 
Let $C$ be a family of plane curves, parametrized by $E$, and let $d \in \M$ be such that for generic $e \in E$, it holds that $dim(e/d)=2$ (such a family exists by Lemma \ref{loytyyperhe}).
For $x,y,x',y' \in \M$ and $e$ a generic element in $E(d)$, we write $C^2(e; xy, x'y')$ if the following hold:
\begin{enumerate}[1.]
\item $(x,y)$ and $(x',y')$ are generic on $C(d,e)$;
\item $xy \da_{de} x'y'$;
\item $Lt(xy/de)=Lt(x'y'/de).$
\end{enumerate}

\begin{lemma}\label{loytyye}
Let $A \subseteq \M^{eq}$ be a $1$-dimensional Galois definable set with unique generic type, and let $\alpha, \beta, \alpha',\beta'$ be generic elements of $A$ independent from each other and from $d$.
Let $a=(a_1, \ldots, a_m)$, $b=(b_1, \ldots, b_m)$, $a'=(a_1, \ldots, a_m)$ and $b'=(b_1, \ldots, b_m)$ be weak codes for $\alpha$, $\beta$, $\alpha'$ and $\beta'$, respectively, over the parameter tuple $c$, chosen so that $dim(a_1/c)=dim(b_1/c)=dim(a_1'/c)=dim(b_1'/c)=1$. 

Suppose $h,g,g' \in \M$ are such that $h \da_c aba'b'd$, $g \in \textrm{bcl}(h,b)$, and $Lt(a,b,g/cdh)=Lt(a'b',g'/cdh)$.
Then, there exists some $e \in E(d)$ such that $C^2(e; a_1b_1, a_1' b_1')$
and $Lt(a,b,g/cdeh)=Lt(a',b',g'/cdeh).$
\end{lemma}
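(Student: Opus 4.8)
The plan is to realize the whole configuration on a generic curve of the family $C$ and then transport it onto the given data by an automorphism over $cd$.

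First I would record the two structural facts that make this possible. A dimension count shows that $a_1,b_1,a_1',b_1'$ form a generic $4$‑tuple over $d$: each of $\alpha,\beta,\alpha',\beta'$ is generic in the $1$‑dimensional set $A$, they are independent over $d$, the coding tuple $c$ is independent from $A$ and from $d$, the prescribed first coordinates are interbounded over $c$ with the corresponding elements of $A$, and singletons have dimension $\le 1$; hence $t^g(a_1b_1a_1'b_1'/d)$ is the generic $4$‑type over $d$, and in particular each pair is a generic $2$‑type (this stays true over $cd$ and over $cdh$, using $h\da_c aba'b'd$). Secondly, since $dim(abg/cdh)=dim(ab/cdh)=2=dim(a'b'g'/cdh)$ and $dim(aba'b'/cdh)=4$ while $g\in\textrm{bcl}(hb)$ and $g'\in\textrm{bcl}(hb')$, we get $abg\da_{cdh}a'b'g'$; combined with $Lt(abg/cdh)=Lt(a'b'g'/cdh)$ and stationarity of Lascar types this produces an automorphism over $cdh$ swapping $abg$ and $a'b'g'$, and it forces the joint Lascar type $Lt(h,a,b,g,a',b',g'/cd)$ to be completely determined by $Lt(h/cd)$, $Lt(ab/cd)$ and $p:=Lt(abg/cdh)$ together with the independence relations $h\da_{cd}aba'b'$ and $abg\da_{cdh}a'b'g'$ (it is, in effect, the ``free'' configuration).

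Next I would build a copy of this configuration attached to a generic plane curve. Choose $e^{*}$ generic in $E(d)$ with $e^{*}\da_d c\,abga'b'g'h$, a generic point $(x,y)$ of $C(d,e^{*})$ over $cde^{*}$, and $(x',y')$ realizing the free extension of the (stationary) Lascar type $Lt(xy/cde^{*})$ over $cde^{*}xy$; then $C^{2}(e^{*};xy,x'y')$ holds and, by the ``moreover'' clause of Lemma~\ref{loytyyperhe}, $e^{*}\in\textrm{bcl}(dxyx'y')$ and $dim(xyx'y'/d)=4$, so $(x,y,x',y')$ is again a generic $4$‑tuple over $d$. By stationarity there is a swap $\sigma\in\textrm{Aut}(\M/cde^{*})$ with $\sigma(xy)=x'y'$, $\sigma(x'y')=xy$. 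Adjoin a free copy $\tilde h$ of $h$ (i.e. $Lt(\tilde h/cd)=Lt(h/cd)$ and $\tilde h\da_{cd}e^{*}xyx'y'$); since $Lt(\tilde h/cde^{*}xyx'y')$ and $Lt(\sigma(\tilde h)/cde^{*}xyx'y')$ are both the free extension of $Lt(\tilde h/cde^{*})$, post‑composing $\sigma$ with an automorphism fixing $cde^{*}xyx'y'$ yields $\bar\sigma\in\textrm{Aut}(\M/cd\tilde h e^{*})$ still swapping $xy$ and $x'y'$. Finally, extend $(x,y)$ to a code pair: pick $\tilde\alpha,\tilde\beta$ generic in $A$, interbounded over $c$ with $x,y$ respectively, and codes $\tilde a,\tilde b$ over $c$ whose first coordinates are $x,y$, arranged so that $Lt(\tilde a\tilde b\tilde h/cd)=Lt(abh/cd)$, then $\tilde g\in\textrm{bcl}(\tilde h\tilde b)$ with $Lt(\tilde a\tilde b\tilde g\tilde h/cd)=Lt(abgh/cd)$, and put $\tilde a'=\bar\sigma(\tilde a)$, $\tilde b'=\bar\sigma(\tilde b)$, $\tilde g'=\bar\sigma(\tilde g)$. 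By the symmetry of the construction and the freeness identified in the first step, $(\tilde h,\tilde a,\tilde b,\tilde g,\tilde a',\tilde b',\tilde g')$ realizes $Lt(h,a,b,g,a',b',g'/cd)$, and in addition it carries $e^{*}$ with $C^{2}(e^{*};\tilde a_1\tilde b_1,\tilde a_1'\tilde b_1')$ (here $\tilde a_1\tilde b_1=xy$, $\tilde a_1'\tilde b_1'=x'y'$) and with $\bar\sigma\in\textrm{Aut}(\M/cd\tilde h e^{*})$ swapping $\tilde a\tilde b\tilde g$ and $\tilde a'\tilde b'\tilde g'$.

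To conclude, take $\Phi\in\textrm{Aut}(\M/cd)$ with $\Phi(\tilde h,\tilde a,\tilde b,\tilde g,\tilde a',\tilde b',\tilde g')=(h,a,b,g,a',b',g')$ and set $e=\Phi(e^{*})$. Since $C^{2}(\cdot;\cdot,\cdot)$ is defined over $d$ and $\Phi$ fixes $d$, we obtain $C^{2}(e;a_1b_1,a_1'b_1')$ (and $e$ is generic in $E(d)$ because $dim(e/d)=dim(e^{*}/d)=2$); moreover $\Phi\bar\sigma\Phi^{-1}\in\textrm{Aut}(\M/cdeh)$ sends $abg$ to $a'b'g'$, giving $Lt(a,b,g/cdeh)=Lt(a'b'g'/cdeh)$, which also supplies the remaining clause $Lt(a_1b_1/de)=Lt(a_1'b_1'/de)$ of $C^{2}$. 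The step I expect to be the real obstacle is the middle one: engineering the curve‑side copy so that a \emph{single} automorphism at once realizes the $xy\leftrightarrow x'y'$ swap, fixes the auxiliary frame element $\tilde h$, and fixes the curve parameter $e^{*}$, while the extended tuple still realizes the full joint Lascar type over $cd$ pinned down in the first step; the remainder is routine bookkeeping with dimensions, free extensions, and stationarity of Lascar types.
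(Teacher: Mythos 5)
Your overall strategy --- realize a $C^2$-configuration on a generic curve and then transport it onto the given data by automorphisms over $cd$ --- is the same as the paper's, but in compressing it into a single move you create a genuine gap at the end. You conclude that ``$\Phi\bar\sigma\Phi^{-1}\in\textrm{Aut}(\M/cdeh)$ sends $abg$ to $a'b'g'$, giving $Lt(a,b,g/cdeh)=Lt(a'b'g'/cdeh)$''. An automorphism fixing the finite set $cdeh$ pointwise only witnesses equality of \emph{Galois} types $t^g(\cdot/cdeh)$; it does not yield equality of Lascar types, which are strictly finer over a finite set. The same issue is already present on the curve side: $\bar\sigma\in\textrm{Aut}(\M/cd\tilde h e^*)$ gives $t^g(\tilde a\tilde b\tilde g/cd\tilde h e^*)=t^g(\tilde a'\tilde b'\tilde g'/cd\tilde h e^*)$, but not the Lascar-type equality. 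So you have not actually verified item (iii) of your ``freeness'' decomposition, namely $Lt(\tilde a'\tilde b'\tilde g'/cd\tilde h)=p$, and hence the claim that $(\tilde h,\tilde a,\tilde b,\tilde g,\tilde a',\tilde b',\tilde g')$ realizes $Lt(h,a,b,g,a',b',g'/cd)$ is not established.

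The paper avoids this by never asking an automorphism over a finite set to \emph{produce} a Lascar-type equality; it only uses automorphisms to \emph{relabel} one that is already in hand. It first moves $(x,y)\mapsto(a_1,b_1)$ over $cdh$ by (QM4); this preserves $C^2$, whose clause $3$ is itself a Lascar-type equality over $de''$, and independence from $ch$ upgrades it to $Lt(a_1,b_1/cde''h)=Lt(x'',y''/cde''h)$. It then \emph{chooses} $(x^*,y^*,g^*)$ so that $Lt(abg/cde''h)=Lt(x^*y^*g^*/cde''h)$ holds by fiat, and finally picks $f'$ fixing $abg\,cdh$ pointwise (possible because $a'b'g'$ and $x^*y^*g^*$ are each independent of $abg$ over $cdh$, by stationarity), so that applying $f'$ to the Lascar-type equality over $cde''h$ simply replaces $e''$ by $e$. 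Your argument could be repaired along these lines: define $\tilde a'\tilde b'\tilde g'$ not as $\bar\sigma(\tilde a\tilde b\tilde g)$, but as an independent realization of $Lt(\tilde a\tilde b\tilde g/cd\tilde h e^*)$ with first coordinates $(x',y')$, so that the needed Lascar-type equalities hold by construction rather than being inferred from an automorphism over a finite set.
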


\begin{proof}
Let $e' \in E(d)$ be generic, and let $x,y,x',y' \in \M$ be such that $C^2(e';xy,x'y')$ and $xyx'y' \da_d ch$.
By Lemma \ref{loytyyperhe}, the sequence $x,y,x',y'$ is generic and independent over $d$, and hence over $cdh$.
It follows from the assumptions that also $a_1,b_1,a_1',b_1'$ is generic and independent over $cdh$.
By (QM4) (the uniqueness of generic type in a quasiminimal pregeometry structure), there is some $f \in \textrm{Aut}(\M/cdh)$ such that $f(x,y)=(a_1, b_1)$.
Let $e''=f(e')$, $x''=f(x')$, and $y''=f(y')$.
Then, $C^2(e'';a_1b_1,x''y'')$, and thus $Lt(a_1,b_1/de'')=Lt(x'',y''/de'').$
By Lemma \ref{loytyyperhe}, $e'' \in \textrm{bcl}(d, a_1, b_1, x'',y'')$, so $a_1b_1 x''y'' \da_{de''} ch$.
It follows that $Lt(a_1,b_1/cde''h)=Lt(x'',y''/cde''h).$
Now there are some elements $x_2, \ldots, x_m$, $y_2, \ldots, y_m$, and $g^*$ such that
$$Lt(a_1, \ldots, a_m, b_1, \ldots, b_m,g/cde''h)=Lt(x'',x_2, \ldots, x_m, y'', y_2, \ldots, y_m,g^*/cde''h).$$
Denote $x^*=(x'',x_2, \ldots, x_m)$ and $y^*=(y'', y_2, \ldots, y_m).$
By our assumptions, $Lt(a'b'g'/cdh)=Lt(abg/cdh)=Lt(x^*y^*g^*/cdh).$
Since $a'b'g' \da_{cdh} ab$ and $x^*y^*g^* \da_{cdh} ab$, there is some $f' \in \textrm{Aut}(\M/abcdh)$ such that $f'(x^*y^*g^*)=a'b'g'$.
Let $e=f'(e'')$.
Then, $e$ is as wanted.
\end{proof}

\begin{lemma}\label{loytyyspec}
Let $A \subseteq \M^{eq}$ be a $1$-dimensional Galois definable set with unique generic type, and let $\alpha, \beta, \alpha',\beta'$ be generic elements of $A$.
Let $a=(a_1, \ldots, a_m)$, $b=(b_1, \ldots, b_m)$, $a'=(a_1, \ldots, a_m)$ and $b'=(b_1, \ldots, b_m)$ be weak codes for $\alpha$, $\beta$, $\alpha'$ and $\beta'$, respectively, over the parameter tuple $c$, chosen so that
$dim(a_1/c)=dim(b_1/c)=dim(a_1'/c)=dim(b_1'/c)=1$. 
Let $e \in E(d)$ be generic and suppose $C^2(e; a_1b_1, a_1'b_1')$.
Suppose $h,g,g' \in \M$ are such that $h \da_c aba'b'd$, $g \in \textrm{bcl}(h,b)$, and suppose $Lt(abg/cdeh)=Lt(a'b'g'/cdeh).$

Then, there is a specialization $(c,d,e,h,a,b,a',b',g') \to (c,d,e,h,a,b,a,b,g).$
\end{lemma}
 
\begin{proof} 
We will apply Axiom (8) of Zariski-like structures.
By Lemma \ref{loytyyperhe} and our assumptions, $dim(a_1, b_1, a_1', b_1'/cdh)=4$, and $e \in \textrm{bcl}(d,a_1, b_1, a_1', b_1').$
Thus, \mbox{$a_1 b_1 \da_{cdeh} a_1' b_1'$,} and it follows that $abg \da_{cdeh} a'b'g'$.
By Lemma \ref{si}, $abg$ and $a'b'g'$ are strongly indiscernible over $cdeh$, and of course also $abg$ and $abg$ are. 
Since $abg$ and $a'b'g'$ have the same Lascar type over $cdeh$, we have $(c,d,e,h,a,b,g) \to (c,d,e,h,a',b',g').$
Moreover, $cdeh \to cdeh$ is strongly good and of rank $0 \le 1$.
 Hence, the Lemma follows from Axiom (8).
\end{proof}

We will now prove our main theorem, 
the analogue to Lemma 6.11 in \cite{HrZi}.

\begin{theorem}\label{MAIN}
Let $\M$ be Zariski-like structure with a non locally modular pregeometry.
Then, $\M$ interprets either an algebraically closed field or a non-classical group.
\end{theorem}

\begin{proof}
Assume $\M$ doesn't interpret a non-classical group.
We will prove the theorem by finding a rank-indiscernible array of type $2m+n-2$ over a certain finite set of parameters and applying Lemma \ref{groupexists}.
By Theorem 4.19 in \cite{lisuriart} (or 4.22 in \cite{lisuri}), there is a Galois definable $1$-dimensional group $(G,+)$ in $\M^{eq}$ with unique generic type, and by \ref{1abelian}, $G$ is Abelian. 
The elements of $G$ will be used when constructing the array.
 
By Lemma \ref{loytyyperhe}, there exists a family of plane curves $C$, parametrized by a set $E$, and a tuple $d \in \M$ such that for generic $e \in E(d)$, it holds that $dim(e/d)=2$ and $e$ is interbounded over $d$ with the canonical parameter of the family.
Let $\kappa$ be some cardinal large enough (for the argument that follows after a couple of paragraphs), and let $\alpha_i$, $\beta_i$, $\beta^j$, $i,j<\kappa$, be generic  
elements of $G$ independent from each other and from $d$.
For $i,j \ge 1$, let $\beta_{ij}=\beta_i+\beta^j$.
Let $\gamma_1, \ldots, \gamma_8$ be elements of $G$ generic and independent over everything mentioned so far. 
Denote $\gamma=(\gamma_1, \ldots, \gamma_8)$ and $\gamma_{ij}=(\beta_{ij}+\gamma_1, \ldots, \beta_{ij}+\gamma_8)$.
The elements $\alpha_i$, $\beta_{ij}$ will be used to build the indiscernible array, the others are auxiliary elements that will be needed in some of the calculations.

Denote $S=\{\alpha_i, \beta_i, \beta^j, \beta_{ij}, \gamma_k, \beta_{ij}+\gamma_k\}_{i,j<\kappa, 1 \le k \le 8}$.
By Lemma \ref{koodaus} and Remark \ref{koodaus2}, there is some $c \in \M$, independent from $Sd$, so that the elements of $S$ can be (weakly) coded over $c$. 
In particular, there are weak codes in $\M$ for the elements of $S$
so that each group element is interbounded over $c$ with its weak code, and the first coordinates of the weak codes are generic over $c$.
Fix now some such set of weak codes, and let $a_i$, $b_i$, $b^j$ and $b_{ij}$ denote the weak codes for $\alpha_i$, $\beta_i$, $\beta^j$ and $\beta_{ij}$, respectively.
Let $g$ be a tuple consisting of the weak codes for $\gamma_k$, $1 \le k \le 8$, and let $g_{ij}$ be a tuple consisting of the weak codes for $\beta_{ij}+\gamma_k$, $1 \le k \le 8$.
 
\begin{claim}\label{lisatty}
We may choose the weak codes so that  for all $i,j$,
$$Lt(a_i, b_{ij}, g_{ij}/cdg)=Lt(a_0,b_{00}, g_{00}/cdg).$$
\end{claim}

\begin{proof}
Since $S \da cd$, we may code the elements of $S$ so that all the codes have the same Lascar type over $cd$.
Let $i,j<\kappa$.
The sequence $a_0, b_{00}, a_i, b_{ij}, g$ is independent, so we have 
$Lt(a_i,b_{ij}/cdg)=Lt(a_0b_{00}/cdg)$.
By Lemma 2.53 in \cite{lisuriart}, there is an automorphism $f \in Aut(\M/cgd)$ such that
$f(a_0b_{00})=a_ib_{ij}$ and $Lt(a_i,b_{ij},f(g_{00})/cdg)=Lt(a_0,b_{00}, g_{00}/cdg)$.
We claim that $f(g_{00})$ is a weak code for $\gamma_{ij}$ over $c$, and thus we may choose $g_{ij}=f(g_{00})$.
Indeed, by Lemma \ref{koodaus}, there is a definable function $F$ such that $F(c,g)=\gamma$ (abusing notation to mean that we apply the function
$F$ to each member of the tuple $g$ separately), 
and thus $f(\gamma)=\gamma$.
Moreover, $F(c,b_{00})=\beta_{00}$ and $F(c,b_{ij})=\beta_{ij}$, so $f(\beta_{00})=\beta_{ij}$, and thus
$$F(c,f(g_{00}))=f(\gamma_{00})=f(\gamma + \beta_{00})=\gamma+\beta_{ij}=\gamma_{ij},$$
as wanted. 
\end{proof}

Suppose from now on that the codes are chosen so that $Lt(a_i, b_{ij}, g_{ij}/cdg)=Lt(a_0,b_{00}, g_{00}/cdg).$
Denote by $a_i^1$ and $b_{ij}^1$ the first coordinates of the tuples $a_i$ and $b_{ij}$, respectively.
By Lemma \ref{loytyye}, there is, for each pair $(i,j)$, some $e_{ij} \in E(d)$ such that $C^2(e_{ij}; a_0^1b_{00}^1, a_i^1b_{ij}^1)$ (in the notation introduced right before Lemma \ref{loytyye}) and $$Lt(a_i, b_{ij}, g_{ij}/cde_{ij}g)=Lt(a_0,b_{00}, g_{00}/cde_{ij}g).$$
Let $A_{ij}=(a_i, b_{ij}, e_{ij}, g_{ij})$, and let $A=(A_{ij})_{i,j\ge 1}$.
We will use $A$ to build an indiscernible array of type $2m+n-2$.  
Denote from now on $p=c d g a_0 b_{00} g_{00}$.

We will next show that if we choose $\kappa$ to be large enough, then we can find an indiscernible array of size $\o_1 \times \o_1$ such that each one of its finite subarrays is isomorphic to some finite subarray of $A$.
Let  $\lambda< \kappa$ be a cardinal large enough (but not too large) for the argument that follows. 
For each $i< \kappa$, denote $A_{i, <\lambda}=(A_{ij} | j< \lambda)$.
Using Erd\"os-Rado and an Ehrenfeucht-Mostowski construction, one finds a  sequence $(A'_{i, < \lambda})_{i<\o_1}$ such that every finite permutation of the sequence preserving the order of the indices $i$ extends to 
some $f \in \textrm{Aut}(\M/p)$.
Moreover, an isomorphic copy of every finite subsequence can be found in the original sequence $(A_{i, < \lambda})_{i<\kappa}$.
This construction is due to Shelah, and the details can be found in e.g. \cite{CatTran}, Proposition 2.13.
There it is done for a sequence of finite tuples (whereas we have a sequence of sequences of length $\lambda$), but the proof is essentially the same in our case.
 
We may now without loss  assume that $(A_{i,< \lambda}')_{i<\o_1}$ are the $\o_1$ first elements in the sequence 
$(A_{i,< \lambda})_{i<\kappa}$.
Since we have chosen $\lambda$ to be large enough, we may apply the same argument to $(A_{<\o_1, j}')_{j<\lambda}$ to obtain an array $(A_{<\o_1, j}'')_{j<\o_1}$. 
This is an array of size $\o_1 \times \o_1$, indiscernible over $p$, and we may assume it is a subarray of the original array $A$.
 From now on, we will use $A$ to denote $A''$.
 
Eventually, we will apply Lemma \ref{tekn2} to $A$ to obtain an array that is as wanted.
Thus, we next prove that the assumptions of the Lemma hold for every subarray of $A$ over the parameters $p$.
 
\begin{claim}\label{asstekn2}
$dim(A;m,n/p)=2m+n-1$ and 
$$dim(dcl(A_{11}A_{12}A_{13}p) \cap dcl(A_{21}A_{22}A_{23}p)/p)=2.$$
\end{claim}

\begin{proof} 
Denote $A'=(A_{ij})_{1 \le i \le m, 1 \le j \le n}$ and  
$P=\{a_i, b_{i1}, b_{1j}\}_{1 \le i \le m, 1 \le j \le n}.$
The set $P$ is independent over $p$.
By Lemma \ref{loytyyperhe}, we have $e_{ij} \in \textrm{bcl}(a_i, b_{ij}, p)$, and since $\beta_{ij}=\beta_{i1}+\beta_{1j}-\beta_{11}$, we have $b_{ij} \in \textrm{bcl}(b_{i1}, b_{1j}, b_{11},p)$.
Moreover, $g_{ij} \in \textrm{bcl}(p,b_{ij})$.
Thus, $A' \subseteq \textrm{bcl}(P)$, so $dim(A'/p)=\vert P \vert = 2m+n-1$.
Since $A$ is indiscernible, $dim(A;m,n/p)=2m+n-1$.

For the rest of the claim, denote $C=A_{11}A_{12}A_{13}$ and $C'=A_{21}A_{22}A_{23}$.
The group elements $\beta^1-\beta^2$ and $\beta^1-\beta^3$ are independent over $p$.
Since $\beta^1-\beta^j=\beta_{11}-\beta_{1j}=\beta_{21}-\beta_{2j}$, they are in $\textrm{dcl}(Cp) \cap \textrm{dcl}(C'p)$ and this set has dimension at least $2$.
If it would be greater than $2$, then, since $dim(C/p)=4$, we would have $dim(C/p(dcl(Cp) \cap dcl(C'p))) \le 1$, so
\begin{eqnarray*}
dim(C \cup C'/p)&=&dim(C'/p)+dim(C/C'p) \\ &\le& dim(C'/p)+dim(C/p(dcl(Cp) \cap dcl(C'p))) \le  5.
\end{eqnarray*}
But by the type of the array $A$, we have $dim(C \cup C'/p)=6$, a contradiction.
\end{proof}

Denote now $A_{ij}'=A_{1j}$, and let $A'=(A_{ij}')_{0<i,j<\o_1}$.  

Write $x \to_p y$ for $px \to py$.

\begin{claim}\label{spec1}
$A \to_p A'.$
\end{claim}
 
 \begin{proof}
It suffices to prove that for every finite $J \subset \o_1$, the claim holds for all $\o_1 \times J$ -subarrays of $A$ and $A'$.
Denote $P=\{b^j \, | \, j \in J\}$.
The set $\{a_i, b_i \, | \, i \in I\}$ is independent over $pP$.
By Lemma \ref{loytyyperhe} and our coding, $e_{ij}, b_{ij}, g_{ij} \in \textrm{bcl}(P,p, a_i,b_i)$ for every $j \in J$,
so the sequence $(a_i, b_{iJ}, e_{iJ}, g_{iJ})_{i <\o_1}$ is independent over $pP$ (here, $b_{iJ}=(b_{ij})_{j \in J}$, etc).
By Lemma \ref{morleycofinal}, there is some $X \subset \o_1$, cofinal in $\o_1$ such that $(a_i, b_{iJ}, e_{iJ})_{i \in X}$ is strongly indiscernible over $pP$.
Denote by $Q$ the sequence (obtained from $A'$) that repeats $\o_1$ many times the entry $(a_1, b_{1J}, e_{1J}, g_{1J})$.
Clearly, it is strongly indiscernible over $pP$. 
By Axiom (8) of Zariski like structures,
$$pP(a_i, b_{iJ}, e_{iJ}, g_{iJ})_{i \in X} \to pPQ.$$
Since the array $A$ is indiscernible over $p$, we have (after relabelling the indices in $X$ with $\o_1$)
$$p(a_i, b_{iJ}, e_{iJ}, g_{iJ})_{i \in \o_1} \to p(a_i, b_{iJ}, e_{iJ}, g_{iJ})_{i \in X},$$
and the claim follows.
\end{proof}

Denote $A_{ij}''=(a_{00}, b_{00}, e_{1j}, g_{00})$, and let $A''=(A_{ij}'')_{0<i,j<\o_1}$.  

\begin{claim}\label{spec2}
$A' \to_p A''$.
\end{claim}
 
\begin{proof}
Since specializations respect repeated entries, it suffices to show that
$$(a_1, b_{1j}, e_{1j}, g_{1j})_{j<\o_1} \to_p (a_0, b_{00}, e_{1j}, g_{00})_{j<\o_1}.$$
By Lemma \ref{loytyyspec}, this holds for every $j$ individually.
Here, both sequences are independent over $p$, and
by the indiscernibility of the array $A$, Lemma \ref{samelascar}
and Lemma \ref{si}, they are strongly indiscernible.
Since $a_1$ is independent from $p$, the specialization $a_1 \to_p a_0$ is strongly good, 
and the claim follows from Axiom (8) of Zariski-like structures.
\end{proof}
 
By claims \ref{spec1} and \ref{spec2}, $A \to_p A''$.
We will apply Axiom (9) of Zariski-like structures to the latter specialization and eventually obtain an infinite rank-indiscernible array $A^*$ such that $A \to_p A^* \to_p A''$, rank indiscernible of type  $2m+n-2$ over the parameters $p$.

Let $A^0$ be a finite subarray of $A$ containing the entry $A_{11}$, and let ${A^0}''$ be the corresponding subarray of $A''$. 
By Lemma \ref{finarray}, there is an array ${A^0}^*$ and $b_{11}^*, e_{11}^*, g_{11}^* \in \M$ such that  $A^0 \to_p  {A^0}^* \to_p {A^0}''$, the first specialization is of rank $1$, and  ${A^0}^*_{11}=(a_0, b_{11}^*, e_{11}^*, g_{11}^*)$.
Write ${A^0}^*_{ij}=(a_i^*,b_{ij}^*, e_{ij}^*, g_{ij}^*)$.
We will show that the assumptions posed for $a$ in the statement of Lemma \ref{tekn2} hold for ${A^0}^*$ over $p$.
It will then follow that ${A^0}^*$ is rank indiscernible of type $2m+n-2$ over $p$.
On the way, we will see that $a_i^*=a_0$ and $b_{ij}^*=b_{00}$ for all $i,j$.  
We do the proof as a series of claims.

\begin{claim}\label{eij}
Let $J$ be the set of $j$-indices corresponding to a row of ${A^0}^*$.
Then, for each $i$, the sequence $e_{iJ}^*$ is independent over $p$, and $dim(e_{ij}^*/p)=1$ for all $i,j$.
\end{claim}

\begin{proof}
We have $e_{iJ} \to_p  e_{iJ}^* \to_p e_{1J}$.
Since $A$ is indiscernible over $p$, we have $t(e_{iJ}/p)=t(e_{1J}/p)$, so the specializations are isomorphisms.
Thus, the sequence is independent over $p$ since the sequence $e_{iJ}$ is.
Moreover, it follows from Lemma \ref{loytyyperhe} and the choice of the tuples that form the sequence $p=cdga_0b_{00}g_{00}$ that $dim(e_{ij}/p)=1$ and thus $dim(e_{ij}^*/p)=1$.
\end{proof}

\begin{claim}\label{b00}
For each $j$, we have $b_{1j}^*=b_{00}$.
\end{claim}

\begin{proof}
We have $cda_1b_{1j}e_{1j} \to cda_0b_{1j}^*e_{1j}^*\to cd a_0 b_{00} e_{1j}$ (remember that $a_1^*=a_0$), and $dim(a_1b_{1j}e_{1j}/cd)=dim(a_0 b_{00} e_{1j}/cd)$. Thus, $t(a_0b_{1j}^*e_{1j}^*/cd)=t(a_0 b_{00} e_{1j}/cd)$, and in particular $b_{1j}^* \in \textrm{bcl}(c,d, a_0, e_{1j}^*)$.
Applying this and Claim \ref{eij}, we get $dim(a_0b_{1j}^*e_{1j}^*/cda_0b_{00})=1$, and since also $dim(a_0 b_{00} e_{1j}/cda_0b_{00})=1$, the specialization $$cda_0b_{00}a_0b_{1j}^*e_{1j}^* \to cda_0b_{00}a_0b_{00}e_{1j}$$ is an isomorphism. 
This implies $b_{1j}^*=b_{00}$ by Remark \ref{isoinj}.
\end{proof}

For $k=1, \ldots, 8$, denote the weak code for $\gamma_k$ by $g_k$, and the weak code for $\beta_{ij}+\gamma_k$ by $f_k$ (then, $g=(g_1, \ldots, g_8)$ and $g_{ij}=(f_1, \ldots, f_8)$).
Similarly, write $g_{ij}^*=(f_1^*, \ldots, f_8^*)$ and $g_{00}=(f_1^0, \ldots, f_8^0)$.

\begin{claim}\label{kk'}
For any indices $i,j,j'$, there are some $1 \le k<k' \le 8$ so that $g_kg_{k'} \da_c b_{00}b_{ij}^*b_{ij'}^*$.
\end{claim}

\begin{proof}
Suppose that $g_kg_{k'} \nda_c b_{00}b_{ij}^*b_{ij'}^*$ for all $1 \le k<k' \le 8$.
Since the sequence $g_1, \ldots, g_8$ is independent over $c$, we then have
\begin{eqnarray*}
dim(b_{00}b_{ij}^*b_{ij'}^*/c)&>&dim(b_{00}b_{ij}^*b_{ij'}^*/cg_1g_2)>dim(b_{00}b_{ij}^*b_{ij'}^*/cg_1g_2g_3 g_4)\\&>&dim(b_{00}b_{ij}^*b_{ij'}^*/cg_1\cdots g_6)>dim(b_{00}b_{ij}^*b_{ij'}^*/cg_1\cdots g_8),
\end{eqnarray*}
which is impossible since $dim(b_{00}b_{ij}^*b_{ij'}^*/c) \le 3$.
\end{proof}

\begin{claim}\label{binterbounded2}
For any indices $i,j,j'$, the elements $b_{ij}^*$ and $b_{ij'}^*$ are interbounded over $c$.
\end{claim}

\begin{proof} 
Relabelling the indices if necessary, we may by Claim \ref{kk'} assume that $g_1g_2 \da_c b_{00}b_{ij}^*b_{ij'}^*$.
The specialization $A \to_p A^* \to_p A''$ gives 
$$b_{ij} f_k c g_k \to b_{ij}^*f_k^*cg_k \to b_{00}f_k^0cg_k$$
and by our assumptions $Lt(b_{ij}f_k /cg_k)=Lt(b_{00}f_k^0/cg_k)$,
so it follows from Axiom (3) of Zariski-like structures that
 $t(b_{ij}f_k/cg_k)=t(b_{ij}^*f_k^*/cg_k)$, so $b_{ij}^*$ can be interpreted as a weak code (over $c$) for some $\beta_{ij}^* \in G$ generic over $c$ and $f_k^*$ as a weak code (over $c$) for $\beta_{ij}^*+\gamma_k$.

Since $\beta_{ij}-\beta_{ij'}=\beta_{1j}-\beta_{1j'}$ (remember that $\beta_{ij}=\beta_i+\beta^j$), the statement follows by Claim \ref{b00} and Lemma \ref{binterbound}.
\end{proof}

\begin{claim}\label{a0b00}
For all $i,j$, $a_i^*=a_0$, and $b_{ij}^*=b_{00}$.
\end{claim}

\begin{proof}
Suppose $j \neq j'$.
We will show that the specialization 
$$cda_0b_{00}a_i^*b_{ij}^*e_{ij}^*e_{ij'}^* \to c da_0b_{00}a_0b_{00} e_{1j} e_{1j'}$$
is an isomorphism, and the claim will follow by Remark \ref{isoinj}.
Since $dim(a_0b_{00} e_{1j} e_{1j'}/cda_0b_{00})=2$, it will suffice to show that also $dim(a_i^*b_{ij}^*e_{ij}^*e_{ij'}^*/cda_0b_{00})=2$.

As before, denote by $a_0^1$ and $b_{00}^1$ the first coordinates of $a_0$ and $b_0$, respectively, and by $(a_i^*)^1$ and $(b_{ij}^*)^1$ those of $a_i^*$ and $b_{ij}^*$, respectively. 
We have $d a_i b_{ij} e_{ij} \to d a_i^* b_{ij}^* e_{ij}^* \to d a_0 b_{00} e_{1j}$, so $((a_i^*)^1, (b_{ij}^*)^1, e_{ij}^*)$ 
(and similarly, $((a_i^*)^1, (b_{ij'}^*)^1, e_{ij'}^*)$) is a generic point of $C(d)$. 
 Since $d a_0 b_{00} e_{ij} \to d a_0 b_{00} e_{ij}^* \to d a_0 b_{00} e_{1j}$, also the point $(a_0^1, b_{00}^1, e_{ij}^*)$ (and similarly, $(a_0^1, b_{00}^1, e_{ij'}^*)$) is generic on $C(d)$.
 
We now consider three different cases:
\begin{enumerate}[(1)]
\item $a_0^1 b_{00}^1 \da_{de_{ij}^*} (a_i^*)^1 (b_{ij}^*)^1$ and $a_0^1 b_{00}^1 \da_{de_{ij'}^*} (a_i^*)^1 (b_{ij'}^*)^1$;
\item $a_0^1 b_{00}^1 \nda_{de_{ij}^*} (a_i^*)^1 (b_{ij}^*)^1$;
\item $a_0^1 b_{00}^1 \nda_{de_{ij'}^*} (a_i^*)^1 (b_{ij'}^*)^1$.
\end{enumerate} 

In case (1), it follows from Lemma \ref{loytyyperhe} and Claim \ref{binterbounded2} that  
 $$e_{ij}^*, e_{ij'}^* \in \textrm{bcl}(c,d,a_0, b_{00}, a_i^*, b_{ij}^*, b_{ij'}^*)=\textrm{bcl}(c,d,a_0, b_{00}, a_i^*, b_{ij}^*).$$
Applying the exchange principle repeatedly and taking into account that $e_{ij}^* \notin \textrm{bcl}(c,d,a_{0}, b_{00}, a_i^*)$ (by Claim \ref{eij}) and $b_{ij}^* \in \textrm{bcl}(c,d,e_{ij}^*,a_i^*)$, we get that $a_i^*,b_{ij}^* \in \textrm{bcl}(c,d,a_0, b_{00}, e_{ij}^*,e_{ij'}^*)$, as wanted. 
 
For case (2), we note that since $U((a_i^*)^1,(b_{ij}^*)^1/de_{ij}^*)=1$, we have $(a_i^*)^1,(b_{ij}^*)^1 \in \textrm{bcl}(d,e_{ij}^*,a_0^1,b_{00}^1)$.
It follows that  $a_i^*,b_{ij}^* \in \textrm{bcl}(c,d,a_0, b_{00}, e_{ij}^*,e_{ij'}^*)$, as wanted.

For case (3), we get $a_i^*,b_{ij'}^* \in \textrm{bcl}(c,d,a_0, b_{00}, e_{ij}^*,e_{ij'}^*)$ using the same argument as in case (2).
By Claim \ref{binterbounded2}, $b_{ij}^*$ and $b_{ij'}^*$ are interbounded over $c$, so the the claim follows.
\end{proof} 

\begin{claim}\label{gee}
For all $i,j$, $g_{ij}^*=g_{00}$.
\end{claim}

\begin{proof}
By claim \ref{a0b00}, the specialization $A ^0\to_p {A^0}^* \to_p {A^0}''$ gives us
$$cdgb_{ij}g_{ij} \to cdgb_{00}g_{ij}^* \to cdgb_{00} g_{00}.$$

Since $dim(b_{ij}g_{ij}/gc)=dim(b_{00}g_{00}/gc)=1$, we must have $dim(b_{00}g_{ij}^*/gc)=1$,
and thus $g_{ij}^* \in \textrm{bcl}(g_{00}, g, c)$.
It follows that the specialization $cdgb_{00}g_{00}g_{ij}^* \to cdgb_{00}g_{00}g_{00}$ is an isomorphism and thus $g_{ij}^*=g_{00}$.
\end{proof}

\begin{claim}\label{rindisc}
The array ${A^0}^*$ is rank-indiscernible of type $2m+n-2$ over $p$.
\end{claim}
  
\begin{proof}
We will apply Lemma \ref{tekn2} to the arrays $A^0$ and ${A^0}^*$.
By Claim \ref{asstekn2}, the required assumptions hold for $A^0$.
From Claims \ref{eij}, \ref{a0b00} and \ref{gee}, it follows that $Lt({A^0}^*_{ij}/p)$ does not depend on $i,j$, that $dim({A^0}^*_{ij}/p)=1$, and that each row ${A^0}^*_{iJ}$ is independent over $p$.

Thus, we only need to show that for any $i,j,j',j''$, the specialization 
$$A_{ij}A _{ij'}A_{ij''} \to_p A_{ij}^*A_{ij'}^*A_{ij''}^*$$
is strongly good, i.e. that the specialization
\begin{eqnarray*}\label{ekspek}
(p,a_i, b_{ij}, b_{ij'}, b_{ij''}, g_{ij}, g_{ij'}, g_{ij''} e_{ij}, e_{ij'}, e_{ij''}) \to (p, a_0, b_{00}, b_{00}, b_{00}, g_{00}, g_{00}, g_{00}, e_{ij}^*, e_{ij'}^*, e_{ij''}^*)
\end{eqnarray*}
is strongly good.
Now, $pe_{ij} e_{ij'}e_{ij''} \to pe_{ij}^* e_{ij'}^*e_{ij''}^*$ and $a_i \to a_0$ are isomorphisms and thus strongly regular. 
If we manage to show that $a_i$ is independent from $pe_{ij} e_{ij'}e_{ij''}$, then the definition of strongly regular specialization will give us that 
\begin{eqnarray}\label{cdetc}
(p, a_i, e_{ij}, e_{ij'}, e_{ij''}) \to (p, a_0,  e_{ij}^*, e_{ij'}^*, e_{ij''}^*) 
\end{eqnarray}
is strongly regular.
 
It follows from Lemma \ref{loytyyperhe} and the choice of $p$ that $dim(e_{ij}, e_{ij'}, e_{ij''}/p)=3$.
Since the type of the array $A$ over $p$ is $2m+n-1$ and $g_{ij}, g_{ij'},g_{ij''},b_{ij}, b_{ij'}, b_{ij''} \in \textrm{bcl}(a_i, e_{ij}, e_{ij'}, e_{ij''},p)$,
we can calculate that $dim(a_i, e_{ij}, e_{ij'}, e_{ij''}/p)=4$, so $a_i$ is independent from $pe_{ij} e_{ij'}e_{ij''}$ and the specialization (\ref{cdetc}) is strongly regular.

Further, we have $b_{ij}, g_{ij} \in \textrm{bcl}(p, a_i, e_{ij})$.
Since $(d, a_0, b_{00}, e_{ij}^*)$ is a generic point of $C$, the specialization $p a_i e_{ij} \to p a_0 e_{ij}^*$ is an isomorphism and thus strongly good. 
The analogues hold for $j,j'$, so we may apply the recursive definition of strongly good specializations (Definition \ref{good}) to show that the specialization  
$$(p, a_i, b_{ij}, b_{ij'}, b_{ij''}, e_{ij}, e_{ij'}, e_{ij''}, g_{ij}, g_{ij'}, g_{ij''}) \to (p, a_0, b_{00}, b_{00}, b_{00}, e_{ij}^*, e_{ij'}^*, e_{ij''}^*, g_{00}, g_{00}, g_{00}))$$
is strongly good, as wanted.
\end{proof}   

We will apply Axiom (9) of Zariski-like structures to the specialization $A \to_p A''$ to obtain an infinite indiscernible array of type $2m+n-2$ over $p$.
Enumerate the elements on the left side of the specialization so that $a_0$ is the element enumerated by $0$ and $a_1$ the element enumerated by $1$, and use a corresponding enumeration on the right side (there, both the element enumerated by $0$ and the element enumerated by $1$ will be $a_0$).
Let $S$ be the collection of index sets corresponding to all $m \times n$ subarrays of $A$ containing the entry $A_{11}$ for all natural numbers $m,n$, and add $0$ to every $X \in S$.
The set $S$ is unbounded and directed.  
 
Condition (i) of Axiom (9) clearly holds for $S$.
Now, every $Y \in S$ corresponds to subarrays $A_Y$ and $A_Y''$ of $A$ and $A''$, respectively (we get the correspondence by removing the element indexed with $0$ from $Y$), and by Claim \ref{asstekn2}, $A_Y$ is rank indiscernible of type $2m+n-1$ over $p$.
Suppose now that $A_Y'$ is an array such that $A_Y \to_p A_Y' \to_p A_Y''$, the specialization $A_Y \to_p A_Y'$ has rank $1$,
and both the element enumerated by $0$ and $1$ in $A_Y'$ are $a_0$.
To prove Claim \ref{rindisc} we only used the fact that the conditions listed above for $A_Y'$ hold for the array $A_Y^*$.
Thus, the same argument can be applied to $A_Y'$ and we get that $A_Y'$ is rank indiscernible of type $2m+n-2$ over $p$.
Now, the specialization $A_Y \to_p A_Y^*$ has rank $1$, and by the rank indiscernibility of the arrays, so does any specialization of subarrays $A_X \to_p A_X^*$ where $X \in S$ and $X \subset Y$.
Hence, also condition (ii) of Axiom (9) holds for $S$.

Now, Axiom (9) gives us a sequence $A^*$ so that $A \to_p A^* \to_p A''$,
and we can read $A^*$ as an array by taking the sets in $S$ to consist of indices for subarrays that contain $A_{11}^*$.
By Axiom (9), all of these subarrays are of rank $2m+n-2$ over $p$.
We claim that $A^*$ is rank indiscernible of type $2m+n-2$ over $p$.
Indeed, let $A_0$ be an arbitrary $m_0  \times n_0$ subarray of $A^*$.
Then, there is some $(m_0+1) \times (n_0+1)$ subarray $A_1$ of $A^*$ such that $A_1$ contains the entry $A_{11}^*$ and $A_0$ is a subarray of $A_1$. 
By the same argument that is used to prove Claim \ref{rindisc}, 
$A_1$ is rank indiscernible of type $2m+n-2$ over $p$.
 
We wish to apply Lemma \ref{groupexists} to show that there is an algebraically closed field in $(\M^{eq})^{eq}$.
For this, we need $A^*$ to be indiscernible.
However, if in the beginning of the proof, when we started constructing the sequences $\alpha_i$, $\beta_i$ and $\beta^j$ for $i,j<\kappa$, we have chosen the cardinals $\kappa$ and $\lambda$ to be large enough, we may assume that $A$ and thus $A^*$ is big enough (rather than the size $\omega_1 \times \omega_1$ which we have used so far for convenience of notation) that we may apply the Shelah trick again
(first done in the second paragraph after the proof of Claim \ref{lisatty}).
Thus, we may without loss suppose that $A^*$ is indiscernible.
By Lemma \ref{groupexists}, there is an algebraically closed field in $(\M^{eq})^{eq}$.
\end{proof}

\end{document}